\definecolor{gn}{RGB}{40, 140, 80}
\definecolor{gnn}{RGB}{40, 200, 40}
\newcommand{\Q}{{\mathbb Q}}
\newcommand{\R}{{\mathbb R}}
\newcommand{\Z}{{\mathbb Z}}
\newcommand{\A}{{\mathbb A}}
\newcommand{\bA}{{\mathbb A}}
\newcommand{\bC}{{\mathbb C}}
\newcommand{\bx}{{\mathbf x}}
\newcommand{\by}{{\mathbf y}}
\newcommand{\bw}{{\mathbf w}}
\newcommand{\bu}{{\mathbf u}}
\newcommand{\bv}{{\mathbf v}}
\newcommand{\cE}{\mathcal{E}}
\newcommand{\fa}{{\mathfrak{a}}}
\newcommand{\fb}{{\mathfrak{b}}}
\newcommand{\Cc}{{\mathbb C}}
\newcommand{\Oo}{\mathcal{O}}
\newcommand{\cen}{\mbox{Center}}
\newcommand{\se}[2]{\left\lbrace #1 \mbox{ }\vline\mbox{ } #2 \right\rbrace}
\newcommand{\tl}[1]{\tilde{#1}}
\newcommand{\st}{^{\ast}}
\newcommand{\ts}{_{\ast}}
\newcommand{\ang}[1]{\langle #1\rangle}
\newcommand{\du}{^{\vee}}
\newcommand{\spc}{\mbox{Spec }}
\newtheorem{thm}{Theorem}[section]
\newtheorem{pro}[thm]{Proposition}
\newtheorem{cor}[thm]{Corollary}
\newtheorem{lem}[thm]{Lemma}
\theoremstyle{definition}
\newtheorem{rk}[thm]{Remark}
\newtheorem{cov}[thm]{Convention}
\newtheorem{defn}[thm]{Definition}
\newtheorem{prop}[thm]{Proposition}
\begin{document}

\title{On the  divisorial contractions to curves of threefolds}
\author{Hsin-Ku Chen, Jheng-Jie Chen \and Jungkai A. Chen}
\date{}
\begin{abstract}
We prove that each divisorial contraction to a curve between terminal threefolds is a weighted blow-up under a suitable  embedding. Moreover, we give a classification of the weighted blow-ups assuming that the curve is smooth.
\end{abstract}
\maketitle
\section{Introduction}
Minimal model program has been a very important tool in birational geometry. The purpose of minimal model program, roughly speaking, is to find a good birational model inside a birational equivalent class. To achieve this goal, one needs to consider various birational maps including divisorial contractions, flips and flops. The minimal model conjecture predicts that one can reach either a minimal model or a Mori fiber space after finitely many steps of divisorial contractions and flips. Moreover, it is proved by Kawamata that minimal models are connected by flops (cf. \cite{Kawamata08}).  

In dimension three, minimal model program is considered to be well-established: minimal model exists and also abundance conjecture holds. However, their explicit descriptions are still under development and not completely well-understood. 

Let us recall some results along this direction. For divisorial contractions to points, the complete classification follows from a series of work mainly due to Kawamta, Hayakawa, Kawakita, Yamamoto. For details, please refer to  \cite{Kaw96, Hayakawa99, Hayakawa00, Kawakita02, Kawakita05,  Yamamoto} and the references therein. As a consequence, any of the divisorial extraction over a terminal singularity can be realized as weighted blowup under suitable embedding into an affine space (or its cyclic quotient) of dimension $\le 5$. 

It is thus natural and important to understand threefold divisorial contractions to curves.
In Mori's seminal work \cite{Mori82}, he proved that a divisorial contraction to curve from a smooth threefold $Y$ is the usual blowdown to a smooth curve inside a smooth threefold $X$. Later, Cutkosky \cite{Cut88} generalized the result by showing that a divisorial contraction from a threefold with at worst Gorenstein singularity is the usual blowdown to a lci curve inside a smooth threefold $X$. 

In general, let us consider a divisorial contraction $\phi: Y \to X$ contracting an exceptional divisor $E$ to a curve $C$. Away from possibly finitely many critical points $\Sigma \subset C$, it is well-known that $Y - \phi^{-1}(\Sigma) \to X -\Sigma$ is nothing but the blowup along $C -\Sigma$. Therefore, it suffices to restrict ourselves to  an analytic neighborhood of a point $P \in \Sigma$ in $X$ and then study the divisorial contraction over this neighborhood.

The most systematical study of divisorial contraction to curve was due to Tziolas.  In his series of works (cf. \cite{Tz03, Tz05a, Tz05b, Tz10}), he worked out the situation that $P \in C \subset S \subset X$
that $C$ is a smooth curve and $S$ is a general hypersurface passing through $P$ with at worst DuVal singularities. Tom Ducat considered different situation of a singular curve $C$ contained in a smooth 3-fold $X$. Assuming the general elephant conjecture, he  proved the non-existence  if the general elephant $S$ is of type $D_{2k}$,$E_7$ or $E_8$ and he classified the cases that  $S$ is of type $A_1$,$A_2$ or $E_6$ (cf. \cite{Duc16}). 

The purpose of this article is to study threefold divisorial contractions to curve from a different approach. Unlike the previous work of Tziolas and Ducat, in which general elephants play the crucial role. 
We consider an algebraic approach by constructing appropriate weighted blouwps. 

Suppose that $P \in X$ is a terminal singularity of Cartier index $1$ (resp. $>1$). By the classification of terminal singularities, $X$ (resp. canonical cover of $X$) has embedded dimension $ \le 4$. Fix an embedding $\jmath: X \hookrightarrow \A^4/\mu_r=:W/\mu_r$, where $r$ is the Cartier index. We may assume that $X$ defined by some regular function $f$. Assign appropriate weights of non-negative integers $\bw:=(a_1, a_2, a_3, a_4)$ to each coordinate, we can consider the weighted blowup $\pi : \tl{W} \to W$. In the best scenario that the proper transform of $X$ in $\tl{W}/\mu_r$ is isomorphic to $Y$, then we see that the given divisorial contraction $\phi: Y \to X$ is realized by a weighted blowup. 

However, this is not always the case. The best we can hope for is the following theorem.


\begin{thm} Given a threefold divisorial contraction to a smooth curve $\phi: Y \to X$ near a terminal singularity $P \in X$ of Cartier index $r \ge 1$. There exists an embedding $\jmath': P \in X \hookrightarrow o \in W'/\mu_r \cong \A^N/\mu_r$ and  weights $\bw'=(a_1, \ldots, a_N)$ so that $Y$ is isomorphic to the weighted blowup of $X$ with weights $\bw'$. 

More precisely, consider the weighted blowup $\pi_{\bw'}: \tl{W}' \to W'$. The $\mu_r$-action on $W$ extends to a $\mu_r$-action on $W'$ so that  $Y$ is isomorphic to the proper transform of $X$ in $\tl{W}'/\mu_r$.  
\end{thm}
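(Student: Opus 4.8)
The plan is to realize the divisorial contraction as the proper transform under a weighted blowup by first extracting the combinatorial data of the valuation $\mathrm{ord}_E$ and then enlarging the ambient coordinates until this valuation becomes \emph{monomial}. After passing to the analytic (or complete) local ring and, when $r>1$, to the $\mu_r$-cover, fix the minimal embedding $X \hookrightarrow \A^4/\mu_r$ coming from the classification of terminal singularities, with defining equation $f$. Let $v := \mathrm{ord}_E$; since $E$ contracts to the smooth curve $C$, $v$ is a divisorial valuation whose center on $X$ is $C$. The contraction $\phi$ is recovered from $v$ alone, because $-E$ is $\phi$-ample and hence $Y \cong \Proj_X \bigoplus_{n\ge 0} \phi_*\Oo_Y(-nE) = \Proj_X \bigoplus_{n\ge 0}\fa_n$, where $\fa_n=\{g\in\Oo_{X,P}\mid v(g)\ge n\}$. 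Thus it suffices to produce an embedding and weights for which the associated weighted blowup induces exactly the valuation ideals $\fa_n$.

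The second step is to build the extended embedding from the associated graded ring $\mathrm{gr}_v\Oo_{X,P}=\bigoplus_{n\ge 0}\fa_n/\fa_{n+1}$. I would choose a finite set of homogeneous generators, one of degree $0$ serving as a coordinate along $C$ and the remainder of positive degrees $a_1,\dots$, and lift them $\mu_r$-equivariantly to semi-invariant functions $x_1,\dots,x_N\in\Oo_{X,P}$. Because they generate $\mathrm{gr}_v$, they generate $\Oo_{X,P}$, giving a closed $\mu_r$-equivariant embedding $\jmath':X\hookrightarrow\A^N/\mu_r$, with the $\mu_r$-action extending to $W'=\A^N$. Set $\bw'=(a_1,\dots,a_N)$, non-negative with zeros exactly in the curve directions. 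The crucial point to verify is that $v$ coincides with the monomial valuation $v_{\bw'}$ restricted to $X$, equivalently that the ideal of initial forms of the relations of $X$ in $\A^N$ is precisely the ideal of relations of $\mathrm{gr}_v$. This is where new variables are genuinely needed: whenever some $g$ has $v(g)$ strictly larger than the weighted order predicted by the current monomials, one introduces a fresh coordinate of weight $v(g)$ together with the relation $x_{\mathrm{new}}=g+(\text{higher }v)$. One must argue this process terminates after finitely many steps, for which I would invoke the discreteness and finite generation of $\mathrm{gr}_v$ together with the bounded list of terminal singularity types.

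The third step identifies the proper transform with $Y$. Granting $v=v_{\bw'}|_X$, the proper transform $\tl X$ of $X$ under $\pi_{\bw'}$ is $\Proj_X$ of the image of the Rees algebra $\bigoplus_n (I_n\cap \Oo_X)$, where $I_n$ is the weighted valuation ideal of $\A^N$; by construction $I_n\cap\Oo_{X,P}=\fa_n$, so $\tl X\cong\Proj_X\bigoplus_n\fa_n\cong Y$ provided $\tl X$ is normal with a single exceptional divisor $E$ over $C$. Normality and irreducibility of the exceptional locus follow once the initial ideal is reduced and cuts out the normal cone $\mathrm{Spec}\,\mathrm{gr}_v$ with the expected dimension, after which the discrepancies agree because both sides extract the same valuation.

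I expect the decisive obstacle to be the monomialization in the second step: controlling, $\mu_r$-equivariantly and with only finitely many added coordinates, the discrepancy between $v$ and the naive weighted order on the initial four variables, while simultaneously guaranteeing that the resulting initial ideal is radical so that $\tl X$ is normal. The smoothness of $C$ should be exploited here to pick the degree-$0$ generator as a genuine curve parameter and to keep the normal-direction valuation monomial with coefficients that are at worst polynomial in that parameter, which is what ultimately bounds $N$.
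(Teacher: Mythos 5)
Your overall architecture matches the paper's: both recover $\phi$ from the valuation ideals $\fa_n=\{g\mid \mathrm{ord}_E(g)\ge n\}$ via $Y\cong\Proj_X\bigoplus_n\fa_n$, both enlarge the embedding by adjoining a new coordinate of weight $\mathrm{ord}_E(g)$ whenever the current monomial valuation undershoots on some $g$ (this is exactly the paper's \emph{tilting} by $(g,a)$ with $a>v(g)$), and both terminate using finite generation of the associated graded algebra. So in outline this is the paper's proof.

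The genuine gap is the termination input. Finite generation of $\bigoplus_n\fa_n/\fa_{n+1}$ does \emph{not} follow from ``the bounded list of terminal singularity types'': it is a statement about the particular divisorial valuation $\mathrm{ord}_E$, and the paper must prove it (Proposition \ref{fg}) by identifying $\fa_n/\fa_{n+1}$ with $\phi\ts\Oo_E(-nE)$ through the Kawamata--Viehweg vanishing of $R^1\phi\ts\Oo_Y(-(n+1)E)$, and then combining two very ample multiples $-nE|_E$ and $-n'E|_E$ with $n,n'$ coprime via the base-point-free theorem. Without this your iteration has no reason to stop. Even granting finite generation, termination is not immediate from ``discreteness'': the paper's Theorem \ref{approx} runs a two-stage argument, first an ascending-chain argument on the ideals $\fa_2\subsetneq\fa^{+}_2\subsetneq\cdots$ degree by degree up to the generating degree $N$ (using Noetherianity), and only then the identity $\fb_{n+1}=\sum_{i+j=n+1}\fb_i\cdot\fb_j$ for $n\ge N$ to propagate equality to all higher degrees; your sketch should make this structure explicit. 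A secondary gap is the equivariant step for $r>1$: before lifting generators semi-invariantly you must first produce a divisorial contraction $\tl{Y}\to\tl{X}$ over the canonical cover extracting a valuation $\mu_{\tl{E}}$ with $\mu_{\tl{E}}(\psi\st u)=\mu_E(u)$ (the paper's Lemma \ref{cv}, via a tie-breaking argument), then show $\mu_{\tl{E}}$ is $\mu_r$-compatible, and finally check that semi-invariant lifts can be chosen \emph{without decreasing the valuation} (Lemmas \ref{gcv} and \ref{gcp}); none of this is automatic from character averaging.
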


We briefly explain the idea of the proof in the case that $r=1$. The case that $r>1$ follows from the same argument with some extra care of making the whole process compatible with $\mu_r$ action.  
We start by considering the associated valuation $v_\bw$ on $\Oo_W$ and then restrict it to $\mathcal{O}_X$ via the embedding map $\jmath$. We compare this {\it restriction of the valuation} associated to $(\jmath,\bw)$ with the valuation $\mu_E$ induced by the exceptional divisor $E \subset Y$. If they do not agree, then we modify the embedding and weights by introducing a {\it tilting}, in which we introduce one more new variable and one more weight. By certain finite generation property, one sees that only finitely many steps of tiltings are allowed. In other words, the above theorem follows.

We developed foundations of weighted blowups and comparison of valuations in Section 2 and then we apply these to prove our first main theorem in Section 3.

The second part of this article aims to classify threefold divisorial contractions to curves. To this end, we develop an algorithm of tilting. Roughly speaking, we start with an embedding into $\A^4$ defined by $f$ and choose weights of coordinate $x_i$ by $\mu_E(x_i)$. The proper transform and the exceptional set of the weighted blowup can be computed easily for hypersurface. However, it could be a lot more subtle if we embed the variety into a higher dimensional affine space. Our algorithm suggest to consider nice generating set of the ideal of $X$ in $\A^N$ which we call a {\it $v$-basis}. Then the exceptional set is then defined by the homogeneous part (with respect to the given weight) of $v$-basis. One can verify that exceptional set is irreducible if and only if the restricted valuation coincide with the given valuation, hence we can realized the divisorial contraction as a weighted blowup as desired in this situation and the algorithm stopped.

Turning into more details, we describe the idea of classification under the assumption that $C \subset  \A^4=:W$ is a smooth curve. After changing of coordinates, we may assume that $C$ is the $x_1$-axis. The first attempt is consider weighted blowup with weights $\bw$ assigned by $a_i=\mu_E(x_i)$, for $i=1,...4$. Let $f$ be the defining equation of $X$ in $W$ near $o \in X$. 
Suppose that $f_{v}$, the homogeneous part of $f$ of lowest weight, is irreducible. The weighted blowup of $X$, that is the proper transform of $X$ in the weighted blowup of $W$, has irreducible exceptional divisor. This gives an divisorial contraction as long as it has at worst terminal singularities.  If $f_{v}$ is reducible, then we introduce an tilting. However, since $f$ defines a terminal singularities, one has $\textrm{mult}_o f=2$ and some other constraints. Roughly speaking, there are several conditions on terms of degree $2$ and $3$. The existence of such terms allows us to see that the tilting algorithm which we will introduced in Section 4 will terminated after at most one tilting for all possible types of $f$. The classification will be done in Section 6.  The classification result can summarized as follows.
\begin{thm}\label{thm2}
Let $\phi: Y \to X \ni o$ be a divisorial contraction to a curve $C$ over a germ $o \in X$.  Suppose that $o \in X$ is Gorenstein terminal and $C$ is smooth (resp. $o \in X$ is non-Gorenstein terminal and the preimage of 
 $C$ in its canonical cover is irreducible), then $\phi$ is isomorphic to one of those in the Table at the end, which is a weighted blowup in an ambient space of dimension $\le 5$. 
\end{thm}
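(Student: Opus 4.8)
The plan is to run the tilting algorithm of Section 4 from a carefully chosen initial embedding and to show, using the strong constraints imposed by terminality, that it terminates after at most one tilting; the resulting finite list of weighted blowups is then tabulated. First I would reduce to the Gorenstein case. In the non-Gorenstein situation I would pass to the canonical cover $\pi\colon X^\sharp\to X$, a Gorenstein terminal singularity carrying a $\mu_r$-action. The hypothesis that the preimage $C^\sharp:=\pi^{-1}(C)$ is irreducible is exactly what guarantees that $C^\sharp$ is a smooth $\mu_r$-invariant curve: $\pi$ is \'etale in codimension one so $C^\sharp$ is smooth away from the central fibre, and an irreducible cyclic cover of a smooth curve, being normal, is smooth there as well. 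Thus the entire analysis can be carried out $\mu_r$-equivariantly on $X^\sharp$ and then descended, and it suffices to classify the Gorenstein case $o\in X\hookrightarrow W=\A^4$ with $X=\{f=0\}$ and $C$ a smooth curve, which after a coordinate change I take to be the $x_1$-axis.

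Next I would fix the initial data for the algorithm. Since $o\in X$ is Gorenstein terminal it is cDV, so $\mmul_o f=2$; write $f=f_2+f_3+\cdots$ for its decomposition into standard-degree homogeneous pieces. I assign weights $a_i:=\mu_E(x_i)$, and because $C$ is the $x_1$-axis while $E$ dominates $C$ one has $a_1=0$ and $a_2,a_3,a_4>0$. Forming the weighted blowup $\pi_\bw\colon\tl W\to W$, let $f_v$ be the lowest-weight part of $f$. By the $v$-basis description the exceptional set of the proper transform $\tl X$ is cut out by $f_v$, and $\tl X\cong Y$ precisely when this exceptional set is irreducible, i.e. when the restricted valuation coincides with $\mu_E$. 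The whole classification thus turns on the dichotomy of whether $f_v$ is irreducible.

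When $f_v$ is irreducible I would conclude directly: the exceptional divisor of $\tl X\to X$ is irreducible, the restricted valuation equals $\mu_E$, and since $Y$ is terminal so is $\tl X$, whence $\phi$ is the weighted blowup in $\A^4$ and the algorithm stops at ambient dimension $4$. When $f_v$ is reducible I introduce a single tilting: one new variable $x_5$ and one new weight, chosen to record the factor of $f_v$ producing the extra component, re-embedding $X$ in $\A^5$ and replacing the valuation data. The heart of the argument, and the step I expect to be the main obstacle, is to show that a single tilting always suffices. Here I would exploit that $f$ defines a cDV singularity: the rank and normal form of the quadratic part $f_2$ together with the admissible shape of the cubic part $f_3$ severely restrict which $f_v$ can arise and force the tilted lowest-weight part to become irreducible. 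Controlling the interaction of the degree-$2$ and degree-$3$ terms across the finitely many normal forms $cA_n,\ cD_n,\ cE_k$ is the delicate bookkeeping on which the termination bound rests.

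Finally, with the algorithm terminating after at most one tilting, the ambient dimension is $\le 5$ in every case. I would then enumerate, for each cDV normal form of $f$ and each admissible weight vector $\bw$ compatible with a divisorial contraction to a smooth curve, the resulting weighted blowup, verify in each instance that $Y$ has at worst terminal singularities (discarding the data that fail), and read off the discrepancy and the local model of $Y$. Assembling these outcomes produces the Table, and the non-Gorenstein entries follow by descending the $\mu_r$-equivariant classification from the canonical cover. Beyond the termination bound, the remaining labor is this exhaustive but essentially routine case-by-case computation.
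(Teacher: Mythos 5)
Your plan reproduces the paper's strategy (reduce to the Gorenstein case via the canonical cover, weight the coordinates by $\mu_E$, test irreducibility of the lowest-weight part $f_v$, tilt once into $\A^5$ when it fails, then enumerate), so the architecture is the intended one; but the proposal stops exactly where the proof begins. The assertion that ``a single tilting always suffices'' because cDV-ness ``severely restricts which $f_v$ can arise'' is not something that can be established abstractly --- it \emph{is} the theorem. The paper proves it by first normalizing $f$ to the form $hy+g(x,z,u)$ with $C=(y=z=u=0)$ (Corollary \ref{norm}), sorting into the ten normal forms of Cases I--X according to which degree-$2$ and degree-$3$ monomials occur in $h$ and $g$, and in each case writing down the explicit tilting $t-(x^{k-k'}y+g')$ with $x^{k'}=\gcd(x^ky,g_{v=m})$, computing the $U_y$- and $U_t$-charts, and checking irreducibility and terminality there. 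Without that case analysis there is no termination bound, no list of weights, and no table; calling it ``essentially routine'' understates that it occupies all of Section 6 and produces the actual content of the statement. A related logical gap: irreducibility of the exceptional set only gives that the restricted valuation $v_\bw|_X$ is a valuation (Proposition \ref{prime}); to conclude $\widetilde{X}\cong Y$ you must further identify this valuation with $\mu_E$, which the paper obtains by comparing the two at a general point of $C$, where $\phi$ is an ordinary blowup (Theorem \ref{smooth}). Your phrase ``irreducible, i.e.\ the restricted valuation coincides with $\mu_E$'' conflates these two statements.

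Two further points in the non-Gorenstein reduction need repair. First, your claim that an irreducible preimage $C^\sharp$ of a smooth curve is automatically smooth ``being normal'' is false: irreducibility does not imply normality, and an irreducible curve with a $\mu_r$-action can have smooth quotient while being singular itself (e.g.\ the cuspidal cubic $y^2=x^3$ with the action $(x,y)\mapsto(x,-y)$ has quotient $\A^1$). The paper in fact assumes smoothness of the preimage, and moreover needs Lemma \ref{cv} (tie-breaking plus \cite[Theorem 17.10]{flips}) to produce a terminal divisorial contraction $\tl{Y}\to\tl{X}$ upstairs, and Proposition \ref{group} to arrange $G$-semi-invariant coordinates and a $G$-compatible valuation; none of this is automatic from ``carry out the analysis equivariantly.'' Second, the non-Gorenstein entries Q1--Q4 and, just as importantly, the exclusion of all other group actions require the chart-by-chart admissibility computation (whether $N/N_\bw$ is cyclic and whether the resulting quotient of the chart equation is terminal); this does not descend formally from the Gorenstein classification and is a separate verification in every case of Section 6.
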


 In this article, we only work out the case that $C$ is a smooth curve in $X$, as well as the case that the preimage of curve $C$ in the canonical cover is smooth (in the case $o \in X$ is a non-Gorenstein terminal singularity).
 
 On the other hand, there are some extra technical issues if the contracted curve is singular. So that our current tilting algorithm is not able to work verbatim in some cases. We will leave the classification of divisorial contraction to singular curve to subsequent works.

 As an application, we prove the general elephant conjecture for divisorial contractions to smooth curves.

\begin{thm}\label{gethm}
Let $Y \to X$ be a threefold divisorial contraction to a smooth curve $C$ satisfying the hypothesis of Theorem \ref{thm2}. Then there exists a surface $S$ containing $C$ with at worst Du Val singularities. This implies in particular that a general surface containing $C$ has at worst Du Val singularities. 
\end{thm}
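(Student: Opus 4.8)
The plan is to deduce the statement from the explicit classification. By Theorem \ref{thm2}, the contraction $\phi\colon Y\to X$ is, after a suitable embedding, one of the finitely many weighted blowups recorded in the Table, so it suffices to produce a single Du Val surface $S\supset C$ in each case; the assertion for a \emph{general} such surface then follows from the openness of the condition ``$S$ has at worst Du Val singularities'' in a linear system, together with a Bertini argument on $Y$. In every case I would take $S$ to be the image under $\phi$ of the strict transform $\tl S\subset Y$ of a general surface through $C$, and study the induced birational morphism $\phi|_{\tl S}\colon \tl S\to S$ as a partial resolution of $S$.

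The computational heart is an adjunction comparison on $Y$. Writing $K_Y=\phi^{*}K_X+aE$ for the discrepancy $a>0$ of the exceptional divisor $E$, and $\phi^{*}S=\tl S+cE$ where $c=\mu_E(g)$ is the weighted order along $E$ of a general defining function $g$ of a surface through $C$, adjunction yields
\begin{equation*}
K_{\tl S}=(\phi|_{\tl S})^{*}K_S+(a-c)\,(E|_{\tl S}).
\end{equation*}
Since $o\in X$ is Gorenstein terminal (respectively, the assertion may be checked on the canonical cover, making the whole argument compatible with the $\mu_r$-action), $S$ is a Cartier divisor and $K_S$ is Cartier, so it remains to show that $S$ has canonical singularities, i.e. that $\tl S$ is canonical and that the exceptional curves of $\phi|_{\tl S}$ carry non-negative discrepancies. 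The latter reduces to the numerical inequality $a\ge c$, which I would read off directly in each row of the Table from the weights $\bw'$ via the discrepancy formula for the ambient weighted blowup together with the identification of $c$ as the minimum of the $\mu_E(x_i)$ over the transverse coordinates cutting out $C$.

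For the remaining point — that $\tl S$ is itself Du Val — I would use that $Y$ is terminal, so that $\tl S$ is smooth away from the finitely many singular points of $Y$ lying on $E$; at each such point the local equations of $Y$ and of $\tl S$ are given explicitly by Theorem \ref{thm2}, and I would verify the Du Val property either by direct inspection of these normal forms or, equivalently, by checking through inversion of adjunction that $(Y,\tl S)$ is purely log terminal there. Combining the two points, $\phi|_{\tl S}\colon\tl S\to S$ is a partial resolution by a Du Val surface with non-negative discrepancies, whence $S$ is canonical and Gorenstein, that is, Du Val. I expect the main obstacle to be precisely this last local step: at the non-Gorenstein points of $Y$ on $E$ one cannot argue abstractly from smoothness, and must instead exploit the explicit weighted-homogeneous models furnished by Theorem \ref{thm2}, checking case by case that the chosen general surface through $C$ meets these points transversally enough to inherit only Du Val singularities.
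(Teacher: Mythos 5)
Your proposal takes a genuinely different route from the paper. The paper's proof is a one‑line verification against the classification: for each row of the Table it names an explicit surface ($u=0$ for Nos.\ 1--3, 23, 30--37 and Q3; $y=0$ for Nos.\ 4--7, 24, 26 and Q4; $y+\mu u=0$ with $\mu$ general for the rest) and the displayed normal form of $f$ restricted to that hyperplane is visibly a Du Val equation in three variables; no computation on $Y$ is made. Your adjunction argument on $Y$ is a reasonable alternative, but one step is miscast: $E|_{\tl{S}}$ is \emph{not} $\phi|_{\tl{S}}$-exceptional. For a general fiber $f$ of $E\to C$ one has $\tl{S}\cdot f=-c\,E\cdot f=c>0$, so $E|_{\tl{S}}$ has a component mapping onto $C$, which is a divisor in $S$; pushing your identity $K_{\tl{S}}=(\phi|_{\tl{S}})^{*}K_S+(a-c)E|_{\tl{S}}$ forward to $S$ then gives $0=(a-c)c$, so the condition to check is not the inequality $a\ge c$ (strict inequality is impossible) but the equality $a=c$. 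This equality does hold, and for the trivial reason that $\phi$ is the blowup of the smooth curve $C$ at its generic point, so $a=1$ and $\mu_E(g)=1$ for a general $g\in I_C$; no table lookup is needed. With that correction $\phi|_{\tl{S}}$ is crepant and, as you say, everything reduces to verifying that $\tl{S}$ is Du Val at the finitely many singular points of $Y$ listed in the Table --- a case-by-case check of essentially the same length as the paper's direct one on $X$.

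Two further cautions. First, the paper's chosen surfaces are deliberately \emph{not} general members of $|I_C|$ in several rows (e.g.\ $\mu_E(y)>1$ in Nos.\ 4--7), so the two proofs certify different surfaces; in both, the statement about a \emph{general} surface needs the same short openness argument you invoke. Second, in the non-Gorenstein cases Q1--Q4 the reduction to the canonical cover is not automatic: a cyclic quotient of a Du Val singularity need not be Du Val, so the $G$-action on the chosen $\tl{S}$ must still be examined case by case rather than dismissed by "compatibility with the $\mu_r$-action."
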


The general elephant conjecture for divisorial contractions to curves between terminal threefolds had been proved by Koll\'{a}r and Mori \cite[Theorem 2.2]{km92} in the case that every fiber is irreducible. This conjecture is also known to be true by experts if the underlying space is analytically $\Q$-factorial. Nevertheless, once we have the classification, there is a straight forward proof for the existence of general elephants, without any further assumption.

\section*{Acknowledgement}
We are grateful to Paolo Cascini, J\'anos Koll\'ar, Ching-Jui Lai, Yongnam Lee, Hsueh-Yung Lin, Shigefumi Mori, Nikolas Tziolas, Chenyang Xu       for helpful discussion. The first author is supported by KIAS individual Grant MG088901. The second and third authors are partially support by National Center for Theoretical Sciences and  National Science and Technology Council of Taiwan.  

 \section{Preliminaries}

In this section, we developed some foundation of our construction. Most of the materials are either easy extension or variation of known formulations. We collect and present them here so that it might be helpful to the readers. 

We work over complex number $\mathbb{C}$.

\subsection{Classification of threefold terminal singularities}
We need the classification of threefold terminal singularities especially the description of $cDV$ singularities. Thus we briefly recall the classification mainly due to Reid and Mori. 

\begin{thm}
    A threefold terminal singularity of Cartier index $1$ is an isolated cDV singularity. More precisely, analytical locally it is an isolated hypersurface singularity defined by $f(x,y,z,u)=g(x,y,z)+uh(x,y,z,u)$, where $g(x,y,z)$ defines a surface Du Val singularity. 
\end{thm}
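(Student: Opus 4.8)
The plan is to reprove Reid's characterization of Gorenstein terminal threefold germs by reducing everything to the structure of a general hyperplane section. I would first extract the purely formal consequences of the hypotheses. Since the Cartier index equals $1$, the canonical divisor $K_X$ is Cartier, so $X$ is Gorenstein, in particular Cohen--Macaulay with $\omega_X \cong \Oo_X$ locally. Because terminal varieties are smooth in codimension two, the singular locus of the threefold $X$ has codimension $\ge 3$, and hence $P$ is an isolated singular point. Finally, terminal implies canonical, and a canonical singularity is rational (Elkik). Thus at the outset $P \in X$ is an isolated, rational, Gorenstein singularity, and it remains to pin down its analytic normal form.

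The core of the argument is to show that a general hyperplane section acquires only a Du Val singularity, and this is the step I expect to be the main obstacle. Let $u$ be a general linear form and set $H := X \cap \{u = 0\}$. As a Cartier divisor in the Gorenstein germ $X$, the surface $H$ is again Gorenstein, and adjunction gives $\omega_H \cong \bigl(\omega_X \otimes \Oo_X(H)\bigr)\big|_{H}$. The claim to establish is that $P \in H$ is canonical; being a two-dimensional Gorenstein canonical singularity, it is then automatically Du Val. The proof is a discrepancy comparison: on a resolution $\pi\colon \tl{X} \to X$ one writes $K_{\tl{X}} = \pi^{*}K_X + \sum_i a_i E_i$ with all $a_i \ge 1$ (this integral strict positivity is exactly terminality in index one), and then one compares with the induced discrepancies $a_i - m_i$ on the strict transform of $H$, where $m_i = \mmul_{E_i} H$. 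The delicate point is to show that for general $u$ the multiplicities $m_i$ are small enough that $a_i - m_i \ge 0$ survives; here the strict inequality $a_i \ge 1$ is precisely what absorbs the drop caused by $H$ passing through the isolated center $P$, and inversion of adjunction for the pair $(X, H)$ packages this comparison cleanly.

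Granting that $P \in H$ is Du Val, the theorem follows almost formally. By definition $P \in X$ is then a cDV point, and in particular it is a hypersurface singularity of embedding dimension $\le 4$. Concretely, a Du Val surface germ is a hypersurface $\{g(x,y,z)=0\} \subset \A^3$; choosing $x,y,z$ as coordinates on the slice $\{u=0\}$ and completing to coordinates $x,y,z,u$ of the ambient $\A^4$, the defining equation $f$ of $X$ restricts to $g$ along $\{u=0\}$, so that $f(x,y,z,u) = g(x,y,z) + u\,h(x,y,z,u)$ for some regular function $h$. Isolatedness of the singular point was already recorded, and $g$ defines a Du Val surface singularity by construction, which is exactly the asserted normal form.
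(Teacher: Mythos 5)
This statement is quoted in the paper as background (Reid's theorem, recalled without proof in the preliminaries), so there is no in-paper argument to match yours against; the question is whether your sketch stands on its own, and it does not. The reduction steps at the beginning and end are fine: index one gives Gorenstein, terminal gives regularity in codimension two and hence an isolated point, Elkik gives rationality, and once you know a general hyperplane section through $P$ is Du Val the normal form $f = g(x,y,z) + u\,h(x,y,z,u)$ follows exactly as you say. But the entire content of the theorem sits in the step you flag as ``the main obstacle,'' and your proposed mechanism for it does not work. The linear system of hyperplanes through $P$ has $P$ as a base point, so Bertini gives you no control over the multiplicities $m_i = \mmul_{E_i} H$ for divisors $E_i$ whose center is $P$ itself: for a general such $H$ one still has $m_i \ge \mmul_{E_i}\mathfrak{m}_P > 0$, and there is no a priori inequality $\mmul_{E_i}\mathfrak{m}_P \le a_i$ to ``absorb the drop.'' Establishing that inequality for terminal Gorenstein threefold points is essentially equivalent in difficulty to the theorem you are trying to prove, so invoking it (or invoking inversion of adjunction, which would require knowing $(X,H)$ is canonical --- again the thing to be proved) is circular.

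Reid's actual argument takes a different route through this step: using rationality of $X$ one shows that the general hyperplane section $H$ is a normal Gorenstein surface singularity which is either rational --- hence Du Val, since rational Gorenstein surface points are exactly the Du Val points --- or minimally elliptic; in the elliptic case one produces a crepant divisor over $P$, contradicting terminality. If you want to complete your write-up you should either reproduce that dichotomy (which requires the comparison of $R^1\pi_*\Oo$ between a resolution of $X$ and of $H$, and the classification of Gorenstein surface singularities with $p_g \le 1$) or simply cite Reid, as the paper does. As written, the proposal is an outline with the load-bearing step asserted rather than proved.
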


\begin{thm}
   Let $X$ be a germ of threefold terminal singularity of index $r>1$. Then one of the following holds:
   \begin{enumerate}
       \item  (cA/r) $X \cong (xy+h(z,u)=0) \subset \Cc^4/\frac{1}{r}(a, r-a, 1, 0)$ for some $(a,r)=1$.
       \item (CAx/2) $X \cong (x^2+y^2+h(z,u)=0) \subset \Cc^4/\frac{1}{2}(1, 0, 1, 1)$.
       \item (cAx/4) $X \cong (x^2+y^2+h(z,u)=0) \subset \Cc^4/\frac{1}{4}(1, 3, 1, 2)$.
       \item (cD/2) $X \cong (f=x^2+g(y,z,u)=0) \subset \Cc^4/\frac{1}{2}(1, 1, 1, 0)$ so that $f$ is an isolated $cD$ singularity.
       \item (cD/3) $X \cong (f=x^2+y^3+g(z,u)y+h(z,u)=0) \subset \Cc^4/\frac{1}{2}(0, 1, 2, 2)$ so that $f$ is an isolated $cD$ singularity.
       \item (cE/2) $X \cong (f=x^2+y^3+g(z,u)y+h(z,u)=0) \subset \Cc^4/\frac{1}{2}(1, 0, 1, 1)$ so that $f$ is an isolated $cE$ singularity.
   \end{enumerate}
\end{thm}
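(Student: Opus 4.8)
The plan is to deduce the classification from the index-one case (the preceding theorem) by descending through the canonical cover. Let $(X, P)$ be a germ of terminal singularity of Cartier index $r > 1$, and let $\pi\colon (\tilde X, \tilde P) \to (X, P)$ be its canonical cover, the cyclic cover of degree $r$ associated to the torsion of $K_X$ in the local class group. This cover is Galois with group $\mu_r$, is \'etale in codimension one over $X$, and $\tilde X$ is again terminal but now Gorenstein. By the preceding theorem, $\tilde X$ is therefore an isolated $cDV$ hypersurface singularity, so we may write $\tilde X = (f = 0) \subset \Cc^4$ in coordinates $(x, y, z, u)$. Since the $\mu_r$-action has $\tilde P$ as its only fixed point on $\tilde X$ (freeness in codimension one together with isolatedness of the singularity), one may linearize and diagonalize it to a diagonal action on $\Cc^4$ with weights $\tfrac1r(a_1, a_2, a_3, a_4)$, under which $f$ is semi-invariant of some weight $e$, that is $f(\zeta^{a_1} x, \zeta^{a_2} y, \zeta^{a_3} z, \zeta^{a_4} u) = \zeta^{e} f$ for a fixed primitive root $\zeta$.

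The next step is to extract the numerical constraints that pin down the admissible weight data $(a_1, a_2, a_3, a_4; e)$. First, a generator of the dualizing sheaf $\omega_{\tilde X}$ is furnished by the Poincar\'e residue $\mathrm{Res}\,\dfrac{dx \wedge dy \wedge dz \wedge du}{f}$; for $\pi$ to be the canonical cover the action must rotate this generator by a primitive $r$-th root of unity, which translates into the relation $a_1 + a_2 + a_3 + a_4 - e \equiv 1 \pmod r$ (for a suitable choice of generator of $\mu_r$). Second, freeness in codimension one forces the fixed locus of every nontrivial element of $\mu_r$ to meet $\tilde X$ only at $\tilde P$, giving coprimality-type conditions among the $a_i$ and $r$. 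Third, and most importantly, the quotient $X = \tilde X/\mu_r$ must itself be terminal.

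The heart of the argument is then to enumerate the weight vectors compatible with these three conditions. Here I would invoke the terminal lemma of Reid and Morrison--Stevens, which analyzes which diagonal $\mu_r$-actions on a semi-invariant isolated hypersurface yield a terminal quotient; concretely, one examines the lowest-weight part $f_0$ of $f$, which defines the $cDV$ tangent cone and hence has multiplicity two, and studies the multiset of eigenvalues occurring on the relevant monomials. This is essentially a statement about which collections of $r$-th roots of unity can coexist, and it forces $r \in \{2, 3, 4\}$ in the non-$cA$ cases, while in the $cA$ case it produces precisely the weighting $\tfrac1r(a, r-a, 1, 0)$ with $(a, r) = 1$.

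Finally I would use $\mu_r$-equivariant Weierstrass preparation together with equivariant coordinate changes to bring $f$ into the normal form dictated by its $cDV$ type and surviving weight vector: $xy + h(z, u)$ for type cA/$r$, $x^2 + y^2 + h(z, u)$ for cAx, and $x^2 + g(y, z, u)$ or $x^2 + y^3 + g(z,u)y + h(z,u)$ for the cD and cE cases, matching each with the allowed cyclic group as listed. The main obstacle is precisely the terminal-lemma step: ruling out all the spurious weight combinations is a delicate root-of-unity computation, and afterwards one must verify that the equivariant normalization of $f$ can be carried out while simultaneously preserving both the isolated-singularity condition and the semi-invariance, which is what couples each $cDV$ type to its permitted index.
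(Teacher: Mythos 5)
The paper does not prove this statement at all: it is recalled verbatim as the known classification of index~$r>1$ terminal threefold singularities due to Reid and Mori, with the reader referred to the literature. So there is no ``paper's proof'' to compare against; what you have written is an outline of the standard argument from the original sources (Mori's classification paper, Reid's Young Person's Guide, Koll\'ar--Shepherd-Barron). Your overall strategy is the correct one: pass to the canonical cover, which is Gorenstein terminal and hence an isolated $cDV$ hypersurface by the index-one theorem; observe that the $\mu_r$-action is free off the distinguished point because terminal threefold singularities are isolated, so the cover is \'etale over the punctured germ; linearize and diagonalize the action on an equivariant embedding into $\Cc^4$; record the semi-invariance of $f$ and the condition that $\mu_r$ acts on the residue generator of $\omega_{\tilde X}$ by a primitive $r$-th root of unity; and then constrain the weights.

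The gap, judged as a standalone proof, is that the decisive step is only named, not performed. The entire mathematical content of the theorem lies in the enumeration you delegate to ``the terminal lemma of Reid and Morrison--Stevens'': one must actually show that the quadratic part of $f$ together with the terminality of the quotient forces $r$ arbitrary only in the $cA$ case with weights $\frac{1}{r}(a,r-a,1,0)$, $(a,r)=1$, and forces $r\in\{2,3,4\}$ with the specific weight vectors listed in the remaining cases, and one must rule out every other semi-invariant weight assignment (for instance, why there is no $cE/3$, why $cAx$ admits only $r=2,4$ with those exact weights, why $cD/3$ carries weights $\frac13(0,1,2,2)$ and not some other triple). This is a genuinely long case analysis on eigenvalues of monomials in $f_0$ and on the discrepancy computation for the quotient, and nothing in your write-up carries it out or even isolates the precise form of the terminal lemma being invoked. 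Likewise the final equivariant normal-form step (equivariant Weierstrass preparation and coordinate changes preserving both semi-invariance and the isolated-singularity condition) is asserted rather than executed. As a roadmap your proposal is sound and matches the classical proof; as a proof it is incomplete at exactly the point where the theorem is hard.
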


It is convenient to introduce {\it axial weight} of a singularity. For example, in the case of $cA/r$, the axial weight is defined to be $\min\{k|u^k \in h\}$.

\subsection{Weighted blowups}
Let $G=\langle \tau \rangle $ be a cyclic group of order $r$ generated by $\tau$. Fix an coordinates system $\bx=(x_1,...,x_n)$ of $\A^n$.
For any $\Z$-valued $n$-tuple $(b_1,...,b_n)$
one can define a $G$-action on $\A^n_{\bx}$ by $\tau(x_i)=\zeta_r^{b_i}x_i$, where $\zeta_r$ is a primitive $r$-th root of unity.
We will denote the quotient space $\A^n_\bx/G$ by $\A^n_{\bx}/\frac{1}{r}(b_1,...,b_n)$ or simply $\A^n/\frac{1}{r}(b_1,...,b_n)$.\par

We introduce the weighted blowup by using the language of toric varieties. Let $\{e_1,...,e_n\}$ be the standard basis of $\R^n$ and $\sigma$  (resp. $N_0$) be the cone (resp. lattice) generated by $\{e_1,...,e_n\}$. 
We set $\bv=\frac{1}{r}(b_1,...,b_n)$ and $N:=N_0+\mathbb{Z} \bv$. Then \[W:=\A^n/\frac{1}{r}(b_1,...,b_n) \cong \spc\Cc [N\du\cap \sigma\du]. \]

By a weight, we mean a vector $\bw=\frac{1}{r}(a_1,...,a_n)$  in $N$ such that $a_i \ge 0$ for all $i$. One can consider the barycentric subdivision of the cone $\sigma$ along the ray  $\mathbb{R}_{\ge 0} \bw$. For each $a_i \ne 0$, one has cones 
\[\sigma_i=\ang{e_1,...,e_{i-1},\bw,e_{i+1},...,e_n}.\]

Let $U_i= \spc\Cc [N\du \cap \sigma_i\du]$ be the toric variety defined by the cone $\sigma_i$. The inclusion of $\sigma_i \hookrightarrow \sigma$ yields a morphism $U_i \to W$. Patching them altogether, then the natural map \[\pi_{\bw}: \widetilde{W}=\bigcup_{a_i \ne 0} U_i \rightarrow W \]  is called the weighted blowup of $W$ with weight $\bw$.

We describe the singularity type of $U_i$. Consider the sublattice $N_\bw$ and $N'$ defined below and the inclusions \[N_\bw:=\ang{e_1,...,e_{i-1},\bw,e_{i+1},...,e_n}_\Z \subset N':= N_\bw + \Z e_i = N_0+\Z \bw \subset  N.\] 
It is clear that $[N:N']$ divides $r$ and $[N':N_\bw]=a_i$. We set $r_i:=[N:N_\bw]$. It is clear that $N/N_\bw$ is generated by $\bv$ and $e_i$. 

\begin{lem}\label{quot} $U_i$ is a cyclic quotient space if and only if there is a vector $\bu$ satisfying $N=N_\bw+\Z \bu$. In this case, one has 
 $r_i \bu =b'_i \bw +  \sum_{j\ne i} {b'_j} e_j$ for some $b'_j$s, and hence \[U_i\cong \A^n/\frac{1}{r_i}(b'_1,..., b'_n).\]
\end{lem}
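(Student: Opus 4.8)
The plan is to invoke the standard toric dictionary between refinements of the lattice and quotients by finite abelian groups. By construction the cone $\sigma_i=\ang{e_1,\dots,e_{i-1},\bw,e_{i+1},\dots,e_n}$ is generated by a $\Z$-basis of the sublattice $N_\bw$. Hence, computed with respect to $N_\bw$ rather than $N$, this cone is smooth: writing $\{m_1,\dots,m_n\}\subset N_\bw\du$ for the dual basis of the generators $\{e_1,\dots,\bw,\dots,e_n\}$ and setting $x_j=\chi^{m_j}$, one gets $\spc\Cc[N_\bw\du\cap\sigma_i\du]\cong\A^n$ with coordinates $x_1,\dots,x_n$.

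Next I would make the passage from $N_\bw$ to the finer lattice $N$ (keeping $\sigma_i$ fixed) explicit as a quotient by $G:=N/N_\bw$, a finite abelian group of order $r_i=[N:N_\bw]$. Dualizing the inclusion $N_\bw\subset N$ gives $N\du\subset N_\bw\du$, and one checks that $N\du\cap\sigma_i\du=(N_\bw\du\cap\sigma_i\du)^{G}$, so that $\Cc[N\du\cap\sigma_i\du]=\Cc[N_\bw\du\cap\sigma_i\du]^{G}$. Here a class $\bar g\in G$ acts on $\chi^m$ (for $m\in N_\bw\du$) by $\chi^m\mapsto e^{2\pi i\ang{m,g}}\chi^m$, a rule that is well defined because $g$ only enters modulo $N_\bw$ while $\ang{N_\bw\du,N_\bw}\subset\Z$. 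Thus $U_i\cong\A^n/G$ with $G$ acting diagonally and faithfully on $(x_1,\dots,x_n)$.

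With this identification the equivalence reduces to the statement that $\A^n/G$ is a cyclic quotient space if and only if $G$ is cyclic. A cyclic generator of $G$ is precisely a vector $\bu\in N$ with $N=N_\bw+\Z\bu$, which gives one direction; the other follows because the acting group $G$ is an invariant of the faithful diagonal action (recovered, for instance, from the local class group of $U_i$), so a cyclic presentation forces $G$ cyclic. To extract the weights, assume $N=N_\bw+\Z\bu$ and let $r_i$ be the order of $\bar\bu$, so that $r_i\bu\in N_\bw$; expanding in the chosen basis yields $r_i\bu=b_i'\bw+\sum_{j\ne i}b_j'e_j$. Pairing against the dual basis shows $\bu$ acts on $x_j$ by $e^{2\pi i\ang{m_j,\bu}}=\zeta_{r_i}^{b_j'}$, and therefore $U_i\cong\A^n/\tfrac1{r_i}(b_1',\dots,b_n')$ as asserted.

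The delicate point is the bookkeeping in this last step: one must align the diagonal characters, which are governed by the pairing of $N_\bw\du$ with a lift of $\bar\bu$ to $N$, with the lattice expansion of $r_i\bu$, so that the weight of the coordinate $x_i$ reads off the $\bw$-coefficient $b_i'$ while the weight of $x_j$ ($j\ne i$) reads off the $e_j$-coefficient $b_j'$; a mismatch of conventions here permutes or negates the weights. The only genuinely non-formal input is the recovery of $G$ used in the reverse implication, which within the toric framework may instead be read off directly from the class-group computation for $U_i$.
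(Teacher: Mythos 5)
The paper states this lemma with no proof at all, treating it as a standard fact from the toric dictionary, so there is no argument of the authors' to compare against; your write-up supplies precisely the justification they leave implicit. The main line is correct: $\sigma_i$ is generated by a $\Z$-basis of $N_\bw$, hence $\spc\Cc[N_\bw\du\cap\sigma_i\du]\cong\A^n$; the inclusion $N_\bw\subset N$ dualizes to exhibit $\Cc[N\du\cap\sigma_i\du]$ as the invariant ring of the faithful diagonal action of $G=N/N_\bw$ via $\chi^m\mapsto e^{2\pi i\ang{m,g}}\chi^m$; and the weight bookkeeping at the end (pairing $r_i\bu=b'_i\bw+\sum_{j\ne i}b'_je_j$ against the dual basis, so that $x_i=\chi^{m_i}$ reads off the $\bw$-coefficient and $x_j$ the $e_j$-coefficient) correctly yields $U_i\cong\A^n/\frac{1}{r_i}(b'_1,\ldots,b'_n)$. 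This fully establishes the forward implication and the displayed isomorphism.

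The one genuine soft spot is the reverse implication. You justify it by asserting that the acting group $G$ is an invariant of a faithful diagonal action, recoverable for instance from the local class group of $U_i$. In the presence of quasi-reflections this is false: $\A^1/(\Z/2)\cong\A^1$, and in general two faithful diagonal actions of non-isomorphic finite abelian groups can have isomorphic quotients. Toricly, $\mathrm{Cl}(U_{\sigma_i})$ is $N$ modulo the \emph{primitive} generators of the rays of $\sigma_i$, which is only a proper quotient of $N/N_\bw$ exactly when some $e_j$ ($j\ne i$) or $\bw$ fails to be primitive in $N$, i.e.\ when $G$ contains quasi-reflections. To make ``$U_i$ a cyclic quotient space $\Rightarrow$ $N/N_\bw$ cyclic'' airtight one must either verify primitivity of the ray generators (equivalently, absence of quasi-reflections) in the situations where the lemma is invoked, or read ``cyclic quotient space'' as referring to the given toric presentation rather than the abstract isomorphism type. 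This caveat is equally implicit in the paper's own use of the statement (Lemma \ref{nonquot} depends on this direction), so it is a point worth making explicit rather than a defect peculiar to your argument.
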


It is clear that $N/N_\bw$ is generated by $\bv$ and $e_i$. Therefore, it is convenient to consider the following criterion. 
\begin{lem}\label{nonquot} If   $\bv \ne e_i$ (resp. $\bv \ne e_i^{j}$ for $j=1,2$) and both has order $2$ (resp. $3$) in $N/N_\bw$, then $U_i$ is not a cyclic quotient space. 
\end{lem}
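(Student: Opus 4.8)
The plan is to translate the statement entirely into the arithmetic of the finite abelian group $N/N_\bw$ and then argue by elementary group theory. By Lemma \ref{quot}, $U_i$ is a cyclic quotient space precisely when there exists $\bu$ with $N=N_\bw+\Z\bu$, that is, precisely when $N/N_\bw$ is cyclic. I would first record why this group has the asserted two generators: modulo $N_\bw$ every $e_j$ with $j\ne i$ vanishes (it lies in $N_\bw$ by definition), and $\bw$ vanishes as well, while $N=N_0+\Z\bv$ is generated by the $e_j$ together with $\bv$; hence $N/N_\bw=\ang{\bar\bv,\bar e_i}$, where $\bar\bv$ and $\bar e_i$ denote the images of $\bv$ and $e_i$. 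Thus it suffices to show that under either hypothesis the group $G:=N/N_\bw$ fails to be cyclic.

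For the order $2$ case, both $\bar\bv$ and $\bar e_i$ have order $2$, so the surjection $\Z/2\times\Z/2\to G$ sending the standard generators to $\bar\bv$ and $\bar e_i$ exhibits $G$ as a quotient of $(\Z/2)^2$; the only possibilities are the trivial group, $\Z/2$, and $(\Z/2)^2$. The first two are excluded: since $\bar\bv$ and $\bar e_i$ have order $2$ they are nonzero, so $G$ is not trivial, and if $G\cong\Z/2$ then both would equal the unique element of order $2$, forcing $\bar\bv=\bar e_i$, contrary to hypothesis. Hence $G\cong(\Z/2)^2$ is not cyclic, and $U_i$ is not a cyclic quotient space.

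The order $3$ case runs along the same lines. Now $G=\ang{\bar\bv,\bar e_i}$ is a quotient of $\Z/3\times\Z/3$, so $G$ is trivial, $\Z/3$, or $(\Z/3)^2$. As $\bar e_i$ has order $3$, the group $G$ is nontrivial; and if $G\cong\Z/3$ then $\bar e_i$ generates $G$, forcing $\bar\bv\in\{\bar e_i,2\bar e_i\}$, i.e. $\bar\bv=e_i^{j}$ for $j=1$ or $2$, which is exactly what the hypothesis forbids. Therefore $G\cong(\Z/3)^2$ is not cyclic, and again $U_i$ is not a cyclic quotient space.

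The argument is essentially bookkeeping in a two-generated abelian $p$-group, so no serious obstacle arises. The only point requiring genuine care is the reading of the hypotheses: each condition (``$\bar\bv\ne\bar e_i$'', respectively ``$\bar\bv\ne e_i^{j}$ for $j=1,2$'') is precisely the statement that the two generators of $N/N_\bw$ cannot both be squeezed into a single cyclic factor $\Z/p$, and this is what forces $G$ to be the full elementary abelian group $(\Z/p)^2$ rather than $\Z/p$. Note also that in the order $2$ case a single inequality suffices, since $\bar\bv\ne 0=2\bar e_i$ holds automatically, whereas in the order $3$ case one must rule out both $\bar e_i$ and its double, explaining the two conditions $j=1,2$.
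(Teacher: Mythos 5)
Your proof is correct and rests on the same elementary observation as the paper's: a cyclic group contains at most one element of order $2$ and at most two elements of order $3$, which the hypotheses contradict. The paper states exactly this in two sentences, whereas you repackage it by using the two-generator description $N/N_\bw=\ang{\bar\bv,\bar e_i}$ to classify $G$ as a quotient of $(\Z/p)^2$ and rule out the cyclic quotients; this is a harmless reformulation of the same argument, not a different route.
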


\begin{proof}
    In a cyclic group of order $2n$, there is exactly one element of order $2$. Similarly, in a cyclic group of order $3n$, there are exactly two elements of order $3$.  
\end{proof}

\begin{defn}
    Keep the notation as above. The group $G$ is said to be {\it admissible} if $N/N_\bw$ is cyclic for all $i$. 
\end{defn}

The weighted blowups can be computed explicitly. The reader may check refer to \cite{Hayakawa99} for some more detailed description that meet our purpose here. For example, we have the following useful formula (cf. \cite[p. 521]{Hayakawa99})

\begin{lem}\label{xij}
	Let $x_1$,\ldots, $x_n$ be the local coordinates of $W$ and $\bar{x}_1, \ldots, \bar{x}_{n}$ be the local coordinates of $U_i$.
	The map $U_i\rightarrow W$ is given by $x_j=\bar{x}_j \bar{x}_i^{\frac{a_j}{r}}$ and $x_i=\bar{x}_i^{\frac{a_i}{r}}$.
\end{lem}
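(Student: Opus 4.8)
The statement is the standard local chart description of a toric weighted blowup, and the plan is to prove it by an explicit dual-basis computation on the cone $\sigma_i$, treating the fractional exponents through the cyclic cover of $U_i$.

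First I would recall the toric dictionary. Writing $M:=N\du$ and $M_0:=N_0\du$, the coordinate $x_j$ on $W$ is the character $\chi^{e_j^*}$ attached to the standard dual basis $e_1^*,\dots,e_n^*$ of $M_0$, and $W=\spc\Cc[M\cap\sigma\du]$. Since $\sigma_i\subset\sigma$ we have $\sigma\du\subset\sigma_i\du$, which is exactly what induces the morphism $U_i=\spc\Cc[M\cap\sigma_i\du]\to W$; on the common dense torus $T_N$ this morphism is the identity. Hence proving the formula amounts to rewriting each character $\chi^{e_j^*}$ in terms of the coordinates of $U_i$, and everything may be checked on $T_N$, where no integrality or positivity issue intervenes.

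Next I would produce those coordinates. Because $\sigma_i=\ang{e_1,\dots,e_{i-1},\bw,e_{i+1},\dots,e_n}$ is simplicial, its dual is generated by the basis $m_1,\dots,m_n$ characterised by $\langle m_k,e_l\rangle=\delta_{kl}$ for $l\ne i$ and $\langle m_k,\bw\rangle=\delta_{ki}$, and I set $\bar x_k:=\chi^{m_k}$. Using $\bw=\frac1r(a_1,\dots,a_n)$, so that $\langle e_k^*,\bw\rangle=a_k/r$, solving this linear system is routine and gives $m_i=\frac{r}{a_i}e_i^*$ and $m_j=e_j^*-\frac{a_j}{a_i}e_i^*$ for $j\ne i$. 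Inverting these relations yields $e_i^*=\frac{a_i}{r}m_i$ and $e_j^*=m_j+\frac{a_j}{r}m_i$, and passing from lattice vectors to characters (addition becomes multiplication) turns these into $x_i=\bar x_i^{a_i/r}$ and $x_j=\bar x_j\bar x_i^{a_j/r}$, as claimed. Charts with $a_i=0$ are excluded by construction, and a non-primitive $\bw$ only rescales the data without changing the displayed relations.

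The hard part, or rather the only point that needs care, is the meaning of the fractional exponents $a_j/r$: the vectors $m_k$ need not lie in $M$ (for instance $\langle m_i,e_i\rangle=r/a_i$), so $\bar x_k$ is in general not a regular function on $U_i$ itself. The correct reading, which I would make explicit, is that $m_1,\dots,m_n$ form a $\Z$-basis of $N_\bw\du$, where $N_\bw=\ang{e_1,\dots,e_{i-1},\bw,e_{i+1},\dots,e_n}_\Z$; thus $\spc\Cc[N_\bw\du\cap\sigma_i\du]\cong\A^n_{\bar x}$ is smooth and $U_i=\A^n_{\bar x}/(N_\bw\du/M)$, which is exactly the cyclic quotient presentation of Lemma \ref{quot}. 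So the displayed formulas describe the monomial map between the smooth covers of $U_i$ and of $W$ (equivalently, the induced map of function fields), and that is the sense in which the $\bar x_k$ are the local coordinates of $U_i$. Once this identification between the lattices $M$, $N_\bw\du$ and the quotient is in place, the computation above is immediate, so this bookkeeping is the crux rather than any genuine difficulty.
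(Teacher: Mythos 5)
Your proof is correct. Note that the paper itself offers no proof of this lemma: it is quoted as a known formula with a citation to Hayakawa (p.~521), so there is nothing internal to compare against. Your argument is the standard self-contained toric one, and the computation checks out: the dual basis of $\sigma_i\du$ is indeed $m_i=\frac{r}{a_i}e_i^*$ and $m_j=e_j^*-\frac{a_j}{a_i}e_i^*$ for $j\neq i$, whose inversion gives exactly the displayed monomial relations. You are also right that the only delicate point is the meaning of the fractional exponents, and your resolution --- that $m_1,\dots,m_n$ is a $\Z$-basis of $N_\bw\du$, so the $\bar x_k$ are honest coordinates on the smooth cover $\spc\Cc[N_\bw\du\cap\sigma_i\du]\cong\A^n$ and the formulas describe the monomial map of covers (equivalently of function fields), with $U_i$ recovered as the quotient by $N/N_\bw$ as in Lemma~\ref{quot} --- is exactly the right reading and is consistent with how the lemma is stated and used in the paper. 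The one small thing worth making explicit is that on the charts actually glued into $\widetilde{W}$ one has $a_i\neq 0$ by construction, which you do note; with that, the argument is complete.
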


\subsection{Proper transform}
\begin{defn} Let $W$ be an affine space with coordinates $\bx=(x_1,\ldots, x_n)$ and let $\hat{\Oo}_W$ be the completion of $\Oo_W$. We fix weight $\bw=(a_1,\ldots,a_n)$ of non-negative integers. Then there is a valuation on $\Oo_W$ (resp. $\hat{\Oo}_W$), denoted $v_\bw$, defined by 
\[v_\bw(f)=\min\{\sum_{j=1}^n a_j i_j | c_I \ne 0 \}, \]
for any $f=\sum c_I x^I$.
We call such valuation  an {\it orthogonal valuation} (associated to $\bw$). 
\end{defn}

Indeed, this valuation is the same as the valuation induced by the exceptional divisor of the weighted blowup $\pi_\bw: \widetilde{W} \to W$.  

For any given $f \in \Oo_W$ (resp. $\hat{\Oo}_W$),
we may decompose 
\[ f= \sum_{n=0}^N f_{v_\bw=n} \quad( \text{resp. } f=\sum_{n=0}^\infty f_{v_\bw=n}), \] where $f_{v_\bw=n}$ denotes the part with weight $n$. 

In particular, we may frequently decompose
\[ f= f_{v_\bw} + f_{v_\bw>}, \]
where $f_{v_\bw}$ (resp. $f_{v_\bw>}$) is the part with weight $v_\bw(f)$ (resp. $>v_\bw(f)$).  

\begin{cov}
Throughout this section, we may write $v$ instead of $v_\bw$ and similarly 
$f_v$ instead of $f_{v_\bw}$ if no confusion is likely.  
\end{cov}

Let $\pi_\bw: \tl{W} \to W$ be a weighted blowup of weight $\bw$. Let $U_i$ be $i$-th affine piece of $\tl{W}$. For a  subvariety $V \subset W$ with given defining equations,  we aim to determine its proper transform (in $U_i$) explicitly.   
Without loss of generality, we assume that $a_1>0$ and consider $U_{1}$-chart. In this chart, the map
$\pi|_{U_{1}} : U_{1} \to W$ is given by $x_1 \mapsto \bar{x}_1^{a_1}$ and $x_i \mapsto \bar{x}_i \bar{x}_1^{a_i}$ for all $i \ne 1$.

\begin{defn}For any $f \in \Oo_W$, we define $\tilde{f} \in \Oo_{U_{1}}$ satisfying $f \circ \pi = \tilde{f} \cdot \bar{x}_1^m$ so that $\bar{x}_1 \nmid \tilde{f}$.
Explicitly,
\[\tilde{f}(\bar{x}_1,\ldots,\bar{x}_n):=f(\bar{x}_1^{a_1}, \bar{x}_2 \bar{x}_1^{a_2}, \ldots, \bar{x}_n \bar{x}_1^{a_n}) \bar{x}_1^{-m},\] where $m=v_\bw(f)$.
We call $\tilde{f}$ a {\it lifting of $f$} (on $U_{1}$-chart). Sometimes, we may also denote it as $\pi_*^{-1}f$  (on $U_{1}$-chart).
\end{defn}

Notice  that the lifting of $f \cdot g$ satisfying $\widetilde{f \cdot g}= \tilde{f}  \cdot \tilde{g}$. However, $\widetilde{f+g}= \tilde{f} + \tilde{g}$ holds only when $v_\bw(f)=v_\bw(g)$.

Notice also  that
\begin{equation}
    \tilde{f}(\bar{x}_1,\ldots,\bar{x}_n) \equiv \tilde{f}_v(\bar{x}_1,\ldots,\bar{x}_n) \quad (\textrm{mod } \bar{x}_1).
\end{equation}

\begin{cov}\label{cov} 
To simplify notations, we might abuse the notation by denoting the coordinates on $U_i$ as $x_j$ (instead of $\bar{x}_j$).

In particular, suppose that $f=f_v$ is homogeneous with respect to $\bw$ and not divisible by $x_i$. Then $\tl{f}=f$, in the sense that if $f=\sum c_I x^I$, then $\tl{f}=\sum c_I \bar{x}^I$. 

In most of the situations, there is no essential differences between $\Oo_W$, $\hat{\Oo}_W$ or analytic functions. Under such situation, we may use $R_W$ to denote any one of the rings. 
\end{cov}

\begin{lem} \label{eqn}
Keep the notation as above.  For any irreducible subvariety $V \subset W$ defined by $I$. Then its proper transform on $U_{i}$-chart is defined by $\tilde{I}:=\langle  \tilde{f}  \rangle_{f \in I }$.
\end{lem}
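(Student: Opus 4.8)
The plan is to prove Lemma \ref{eqn} by analyzing what it means for a point of $U_i$ to lie on the proper transform of $V$, and then showing that the ideal cutting out this locus is generated exactly by the liftings $\tilde{f}$. Recall that the proper transform $\tl{V}$ of $V$ on $U_i$ is by definition the closure of $\pi_\bw^{-1}(V \cap (W \setminus E))$ inside $U_i$, where $E$ is the exceptional locus given by $\{\bar{x}_i = 0\}$. Working on the $U_1$-chart for concreteness (so $a_1 > 0$ and $\pi|_{U_1}$ sends $x_1 \mapsto \bar{x}_1^{a_1}$, $x_j \mapsto \bar{x}_j \bar{x}_1^{a_j}$), my first step is to record that the total transform of $V$ is defined by $\langle f \circ \pi \rangle_{f \in I}$, and that each $f \circ \pi = \tilde{f} \cdot \bar{x}_1^m$ with $\bar{x}_1 \nmid \tilde{f}$ and $m = v_\bw(f)$. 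Thus on the open set $\{\bar{x}_1 \ne 0\}$ the total transform and the proper transform coincide, and they are cut out by the $\tilde{f}$.

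Second, I would argue that $\tilde{I} := \langle \tilde{f} \rangle_{f \in I}$ defines a subscheme agreeing with $\tl{V}$ away from $\bar{x}_1 = 0$ and containing no component of $E = \{\bar{x}_1 = 0\}$. The latter is the crucial point: since each generator $\tilde{f}$ satisfies $\bar{x}_1 \nmid \tilde{f}$, no generator vanishes identically on $E$, but more is needed to conclude that $\tilde{I}$ itself has no embedded or extra components along $\bar{x}_1$. Here I would use irreducibility of $V$: since $V$ is irreducible and $\pi_\bw$ is a proper birational morphism that is an isomorphism over the generic point of $V$, the proper transform $\tl{V}$ is irreducible of the same dimension, and it is precisely the unique component of the total transform not contained in $E$. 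So it suffices to identify $V(\tilde{I})$ with this component.

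Third, I would verify the scheme-theoretic equality $V(\tilde{I}) = \tl{V}$ by the following comparison. The ideal of the total transform $\pi^* I$ factors, on $U_1$, as $\pi^* I = \bar{x}_1^{e} \cdot \tilde{I}$ for the appropriate exponent controlled by the minimal weight, at least after saturation; equivalently, $\tilde{I}$ is obtained from $\pi^* I$ by removing the ideal-theoretic contribution of $E$, i.e. by saturating with respect to $\bar{x}_1$. The key identity $f \circ \pi = \tilde{f} \cdot \bar{x}_1^{v_\bw(f)}$ together with multiplicativity $\widetilde{fg} = \tilde{f}\,\tilde{g}$ shows that $\tilde{I}$ is closed under the relevant operations and that $(\pi^* I : \bar{x}_1^\infty) = \tilde{I}$ once we check that $\tilde{I}$ already contains $\bar{x}_1$-saturated generators. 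Because the saturation of the total transform ideal with respect to the exceptional divisor is by definition the ideal of the proper transform, this yields $\tilde{I} = I_{\tl{V}}$.

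The main obstacle I expect is the passage from ``the generators $\tilde{f}$ individually are not divisible by $\bar{x}_1$'' to ``the ideal $\tilde{I}$ is genuinely $\bar{x}_1$-saturated and coincides with the proper transform ideal rather than merely agreeing with it away from $E$.'' Additivity of the lifting fails when $v_\bw(f) \ne v_\bw(g)$, so one cannot naively manipulate generators, and a priori $\tilde{I}$ could fail to be saturated even though each generator is. The clean way around this is to lean on irreducibility of $V$ (hence of $\tl{V}$) so that the proper transform is determined as the unique non-exceptional component, and then to check that $V(\tilde{I})$ contains $\tl{V}$, has the same dimension, and contains no component supported on $E$; irreducibility of $\tl{V}$ then forces equality. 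If one wants the scheme-theoretic (not just set-theoretic) statement, one invokes the standard fact that for a proper birational toric morphism the proper transform ideal equals the saturation of the total transform with respect to the exceptional divisor, and matches this against the explicit generators $\tilde{f}$ via Lemma \ref{xij}.
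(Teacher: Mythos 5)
Your first step (the easy inclusion) is fine and matches the paper: each $\tilde f$ vanishes on $\widetilde V\setminus E$ because $f\circ\pi=\tilde f\cdot\bar x_1^{v_\bw(f)}$, and density gives $\tilde I\subset I_{\widetilde V}$. The problem is the reverse inclusion, and you have correctly located it but not closed it. Your plan is to show $\tilde I=(\pi^*I:\bar x_1^\infty)$ and then quote that the saturation of the total transform is the ideal of the proper transform. The second half is harmless, but the first half is exactly the hard content of the lemma, and the evidence you offer for it --- that each generator $\tilde f$ is individually not divisible by $\bar x_1$, plus multiplicativity of the lifting --- does not suffice. An ideal all of whose listed generators are prime to $\bar x_1$ need not be $\bar x_1$-saturated (e.g.\ $(\bar x_2,\ \bar x_2+\bar x_1\bar x_3)=(\bar x_2,\bar x_1\bar x_3)$), and for the same reason your fallback claim that $V(\tilde I)$ ``contains no component supported on $E$'' is unjustified: nothing you say rules out a higher-codimension component of $V(\tilde I)$ sitting inside $\{\bar x_1=0\}$, so the irreducibility-plus-dimension argument does not get off the ground either. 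Moreover the lemma asserts that $\tilde I$ \emph{is} the defining ideal of the proper transform, so a purely set-theoretic identification would in any case fall short of what is claimed.

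What the paper supplies, and what your proposal is missing, is a mechanism for producing, from an arbitrary $g$ in the ideal $J$ of $\widetilde V$, an element of $I$ whose lifting is $g$ up to a unit. Namely, one takes $D=\mathrm{div}(g)$ on the chart, defines $\pi_*g\in\Oo_W$ by $\mathrm{div}(\pi_*g)=\pi_*D$ (so $\pi_*g$ vanishes on $V=\pi(\widetilde V)$ and hence lies in the prime ideal $I$), and then uses $\pi^*\pi_*D=D+m\,\cE$ to conclude $(\pi_*g)\circ\pi=u\cdot g\cdot\bar x_i^{m}$ for a unit $u$; dividing by the exceptional power shows $ug$, hence $g$, lies in $\tilde I$. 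This is precisely the step that proves $\tilde I$ exhausts the saturation. Without it, or some substitute of comparable strength, your argument establishes only $\tilde I\subset I_{\widetilde V}$ together with equality away from the exceptional divisor, which is not the statement of the lemma.
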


\begin{proof} Let $\cE$ be the exceptional divisor of $\pi_\bw$. Then $ \cE \cap U_i \subset U_{i}$ is defined by $\bar{x}_i=0$. Let $\widetilde{V}$ be the proper transform of $V$ in the $U_{i}$-chart and $J$ be the defining ideal of $\widetilde{V}$ in $U_i$.   

For any point $Q \in \widetilde{V}- \widetilde{V} \cap \cE $, its image $P:=\pi(Q) \in V -V \cap \pi_\bw(\cE)$. One has $f(P)=0$ for all $f \in I$. Hence
\[ 0= f(P)= f(\pi (Q))= \tilde{f}(Q) \cdot \bar{x}_i^m(Q).\] Therefore, $\tilde{f}(Q)=0$ for all $\tilde{f} \in \tilde{I}$ and hence $\tilde{I} \subset J$.

Given $g \in J$, we set a divisor $D: =\text{div}(g)$ on $W'$ and define $\pi_{*}(g) \in \Oo_W$ satisfying $\text{div}(\pi_{*}(g))=\pi_{*} D$, which vanishes on $V$. As $\pi^* \pi_{*}(D)=D+m \cE$ for some $m \ge 0$, one has 
\[\pi^* \pi_{*}g= (\pi_{*} g) \circ \pi = u \cdot g \cdot \bar{x}_i^m, \] for some unit $u$. Hence $u g \in \tilde{I}$ and so is $g$.
 \end{proof}


In order to determine the proper transform more effectively, we introduce the following notion.

\begin{defn} Fix an ideal $I \lhd R_W$, we can consider
\[ I_{v_\bw}:=\{ f_{v_\bw}| f=f_{v_\bw}+f_{v_\bw>} \in I\}.\]
A generating subset $\{ f_1,...,f_k\}$ of $I \lhd R$ is said to be a {\it $v_\bw$-basis} or simply a {\it v-basis} (with respect to $\bw$)  if $\{f_{1,v},...,f_{k,v}\} $ generate the ideal $\langle I_{v_\bw} \rangle$.
\end{defn}

The existence of finite  v-basis follows from the Noetherian condition of $R_W$. \footnote{The convergent formal power series ring is also Noether, see e.g. \cite[p. 72]{GunningRossi}. }

\begin{thm} Let $\{ f_1,...,f_k\}$ be a v-basis of $I_X \lhd \Oo_W$. Then
\begin{enumerate}
\item The proper transform $\widetilde{X}$ of $X$ in $\widetilde{W}$ is defined by  $\{ \tl{f}_1,...,\tl{f}_k\} $ in the completed local ring (of the $U_i$-chart).
\item The exceptional set of $\widetilde{X} \to X$ is defined by $\{f_{1,v},...,f_{k,v} \}$, as a subvariety of $\mathbb{P}(a_1,\ldots, a_n)$ and under the Convention \ref{cov}.
\end{enumerate}

\end{thm}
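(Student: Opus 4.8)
The plan is to derive part (1) from Lemma~\ref{eqn} by proving that, in the completed local ring of the $U_i$-chart, the ideal $\tilde I=\langle \tilde f\rangle_{f\in I}$ is already generated by the finitely many liftings $\tilde f_1,\ldots,\tilde f_k$; part (2) will then drop out by restricting these generators to $\cE\cap U_i=\{\bar x_i=0\}$. The two elementary facts I would lean on throughout are the multiplicativity $\widetilde{gf}=\tilde g\,\tilde f$ (valid because $g\circ\pi=\tilde g\,\bar x_i^{v(g)}$ and $f\circ\pi=\tilde f\,\bar x_i^{v(f)}$ when $g$ is homogeneous) and the congruence $\tilde f\equiv\widetilde{f_v}\ (\mathrm{mod}\ \bar x_i)$ recorded after the definition of the lifting; indeed writing $f=f_v+f_{v>}$ gives
\[
\tilde f=\widetilde{f_v}+\widetilde{f_{v>}}\,\bar x_i^{\,v(f_{v>})-v(f)},
\]
with exponent $\ge 1$.

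The inclusion $\langle\tilde f_1,\ldots,\tilde f_k\rangle\subseteq\tilde I$ is clear since each $f_j\in I$, so the content is the reverse inclusion, which I would prove by a weighted standard-basis division. Given $f\in I$ with $v(f)=m_0$, the $v$-basis hypothesis gives $f_v\in\langle I_{v_\bw}\rangle=\langle f_{1,v},\ldots,f_{k,v}\rangle$; comparing weight-$m_0$ homogeneous parts yields $f_v=\sum_j g_j^{(0)}f_{j,v}$ with each $g_j^{(0)}$ homogeneous of weight $m_0-v(f_j)$. Then $h^{(1)}:=f-\sum_j g_j^{(0)}f_j\in I$ has $v(h^{(1)})=m_1>m_0$, and the two facts above give $\tilde f=\sum_j \tilde g_j^{(0)}\tilde f_j+\bar x_i^{\,m_1-m_0}\,\widetilde{h^{(1)}}$. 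Iterating on $h^{(1)}$ and so on produces a strictly increasing sequence $m_0<m_1<m_2<\cdots$ and, after $\ell$ steps,
\[
\tilde f=\sum_j\Bigl(\sum_{s=0}^{\ell}\bar x_i^{\,m_s-m_0}\,\tilde g_j^{(s)}\Bigr)\tilde f_j+\bar x_i^{\,m_{\ell+1}-m_0}\,\widetilde{h^{(\ell+1)}}.
\]
Since $m_{\ell+1}-m_0\to\infty$, the remainder lies in arbitrarily high powers of $\bar x_i$, hence of the maximal ideal, so it tends to $0$ and each coefficient series converges in the completed local ring, giving $\tilde f=\sum_j G_j\tilde f_j$ there. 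By Lemma~\ref{eqn} this establishes (1).

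For part (2), the exceptional set of $\widetilde X\to X$ is $\widetilde X\cap\cE$, cut out on the $U_i$-chart by $\bar x_i$ together with $\tilde f_1,\ldots,\tilde f_k$. By the congruence above, $\tilde f_j\equiv\widetilde{f_{j,v}}\ (\mathrm{mod}\ \bar x_i)$, while for the homogeneous $f_{j,v}$ a direct computation with $x_l\mapsto\bar x_l\bar x_i^{a_l}$, $x_i\mapsto\bar x_i^{a_i}$ shows that $\widetilde{f_{j,v}}$ is precisely the dehomogenization of $f_{j,v}$ on the $\{x_i\ne 0\}$-chart of $\mathbb{P}(a_1,\ldots,a_n)$ and is independent of $\bar x_i$. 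Hence $\widetilde X\cap\cE\cap U_i=V(\bar x_i,\widetilde{f_{1,v}},\ldots,\widetilde{f_{k,v}})$, which glues across the charts to the subvariety of $\mathbb{P}(a_1,\ldots,a_n)$ defined by the weighted homogeneous $f_{1,v},\ldots,f_{k,v}$, as asserted.

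I expect the main obstacle to be the convergence step rather than any single algebraic identity: the division does not terminate after finitely many steps, so one cannot stay inside $\Oo_W$ or its analytic counterpart, and the entire force of phrasing (1) in the completed local ring is to make sense of the infinite combination $\sum_j G_j\tilde f_j$ via $\bar x_i$-adic (equivalently $\mathfrak m$-adic) convergence. The point I would be most careful about is verifying that the partial remainders genuinely lie in increasing powers of $\bar x_i$, which is exactly what the strict growth of the weights $m_s$ guarantees.
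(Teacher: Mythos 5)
Your proposal is correct and follows essentially the same route as the paper: both reduce to Lemma~\ref{eqn}, then run a weighted division/approximation process in which the $v$-basis hypothesis peels off the lowest-weight part at each step, the strictly increasing weights of the remainders force $\bar x_i$-adic (equivalently $\mathfrak m$-adic) convergence in the completed local ring, and part (2) follows from the congruence $\tilde f\equiv\widetilde{f_v}\ (\mathrm{mod}\ \bar x_i)$. Your remainders $h^{(\ell)}$ are exactly the paper's differences $u-u_{\ell-1}$, and your explicit tracking of the powers $\bar x_i^{\,m_s-m_0}$ matches the paper's final formula for $\tilde u$ in terms of $v(\lambda_j)+v(f_j)-v(u)$.
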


\begin{proof} Let $R \colon =\Oo_{U_i}$ and $\hat{R}$ be its completion. By Lemma \ref{eqn}, we know that the defining ideal $\tl{I}$ of $\tl{X}$ (on a chart) is generated by $\{\tl{f}| f \in I \}$ in $R$ and in $\hat{R}$ as well. We aim to show that for any $u \in I$, one has $\tl{u} \in \langle \tl{f}_1,\ldots, \tl{f}_k \rangle$ in $\hat{R}$. 

To this end, we will approximate $u$ by a sequence $\{u_n\}$ which we define inductively below.

Since $\{f_1,\ldots, f_k\}$ is a $v$-basis, thus 
    $u_v=\sum_{j=1}^k a_{0,j} f_{j,v}$ for some $a_{0,j} \in R$. Let $m=v_\bw(u)$ and $m_j=v_{\bw}(f_j)$. Then we have that $a_{0,j}$ is homogeneous of weight $m'_j:=m-m_j$
    
    We set \[ u_0:=\sum_{j=1}^k a_{0,j} f_{j}. \]
    Given $u_i$, we can similar write 
    $(u-u_i)_v=\sum_{j=1}^k a_{i,j} f_{j,v}$ for some homogeneous $a_{i,j} \in R$ and we set \[ u_{i+1}:=u_i+\sum_{j=1}^k a_{i,j} f_{j}. \]

By the construction, we have \[ v(a_{0,j})+v(f_j)=v(a_{0,j})+v(f_{j,v})=v(u_v)=v(u) \] and similarly
 \[ v(a_{i,j})+v(f_j)=v((u-u_i)_v)=v(u-u_i) \] if $a_{0,j}$ or $a_{i,j}$ is non-zero.

Notice  that $v(u-u_{i+1}) > v(u-v_i)$ as $(u-u_{i+1})=(u-u_i)-(u-u_i)_v$ for $i \ge 0$ and similarly $v(u-u_0) > v(u)$. It follows that $\{u_n\}$ converges to $u$. It also follows that $v(a_{i+1,j}) > v(a_{i,j})$ for all $i,j$ if $a_{i,j} \ne 0$.
Thus we may write 
$u=\sum_j \lambda_j f_j$, where $\lambda_j=\sum_{i=0}^\infty a_{i,j} \in \hat{R}$.
One can easily verify that for all $j$ 
\footnote{ If $a_{0,j} \ne 0$, then $v_\bw(\lambda_j) =v_\bw(a_{0,j}) =v_\bw(u)-v_\bw(f_j)$. Suppose that $a_{0,j} = 0$. Let $i_0$ be the smallest index $i$ such that $a_{i,j} \ne 0$. Then $v(\lambda_j) =v_\bw(a_{i_0,j}) = v_\bw(u-i_0)-v(f_j) >v_\bw(u-u_0)-v_\bw(f_j) >v_\bw(u)-v_\bw(f_j)$.},
\[ v(\lambda_j) \ge v(u)-v(f_j).\]

Therefore, \[\tl{u}=   \sum_{i=1}^k\bar{x}_1^{v(\lambda_j)+v(f_j)-v(u)} \tl{\lambda}_i\tl{f_i} \in \langle \tl{f}_1, \ldots, \tl{f}_k \rangle.\]

In the $U_i$-chart, the exceptional divisor is defined by $\bar{x}_i=0$. Hence the exceptional set of $\widetilde{X} \to X$ is defined by $ \langle \bar{x}_i, \tl{f}_1,...,\tl{f}_r \rangle $. By equation (1),  \[ \langle \bar{x}_i, \tl{f}_1,...,\tl{f}_r \rangle= \langle \bar{x}_i, \tl{f}_{1,v},...,\tl{f}_{r,v} \rangle. \]

Hence the proof is completed.
\end{proof}

\subsection{Valuations and their restrictions}

Let $R$ be a finitely generated domain over $k$. A function $v: R \to \Z_+ \cup \{\infty\}$ is said to be a valuation on $R$ (over $k$) if the following holds:
\begin{enumerate}
\item $v(x_1x_2) = v(x_1) + v(x_2)$.
\item $v(x_1+x_2) \ge \min \{ v(x_1), v(x_2) \}$.  Moreover, equality holds if $ v(x_1) \ne v(x_2)$.
\item $v(x)= \infty$ if and only if $x=0$.
\end{enumerate}

Notice that this is same as saying that $v$ can be extended to a valuation on the field of fraction of $R$, where $R$ is part of the valuation ring.

Given a valuation $v$ on $R$, we have the following ideals
\[ \fa_{v,n} := \{ x \in R | v(x) \ge n \} \]
 and they forms a filtration
\[ R=\fa_{v,0} \supset \fa_{v,1} \supset \cdots \supset \fa_{v,n} \supset \cdots \]
 Moreover,
\[\left\{ \begin{array}{l} \fa_{v,n} \cdot \fa_{v,m} \subset \fa_{v,n+m}, \text{ for all } n, m \ge 0;  \\
 \cap_{n \ge 0} \fa_{v,n} = \{ 0 \};  \\
 \text{if } xy \in \fa_{v,n}, \text{ then } x \in \fa_{v,i}, y \in \fa_{v,j} \text{ for some } i+j = n.
 \end{array} \right. \eqno{\dagger}\]

This leads to the following definition.
\begin{defn} Let $R$ be a finitely generated domain over $k$.  A filtration $\fa:=\{\fa_n \}$  on $R$ is said to be a {\it p-filtration} if $\fa_n \cdot \fa_m \subset \fa_{n+m}$ for all $n,m$. It  is said to be a {\it v-filtration} if all three conditions of $\dagger$ hold.
\end{defn}

To have better understanding of the relations between valuations and filtration. We introduced a generalized notion of valuations.

\begin{defn} Let $R$ be a finitely generated domain over $k$. A function $v: R \to \Z_{\ge 0} \cup \{\infty  \}$ is called a {\it pre-valuation} if the following holds:
\begin{enumerate}
\item $v(x_1x_2) \ge v(x_1) + v(x_2)$.
\item $v(x_1+x_2) \ge \min \{ v(x_1), v(x_2) \}$.  Moreover, equality holds if $ v(x_1) \ne v(x_2)$.
\end{enumerate}
\end{defn}

It is straightforward to check the following property. We leave the details to the readers.

\begin{prop} \label{corr}
Then there is a  one-to-one correspondence
\[ \Phi_R: \{ \text{pre-valuations of } R \} \to \{ \text{p-filtrations of } R \}\] sending a pre-valuation $v$ to its associated p-filtration $\fa_v$.
Moreover, $\Phi_R$ induced a one-to-one correspondence between valuations and $v$-filtrations of $R$. 
\end{prop}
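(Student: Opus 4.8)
The plan is to establish the bijection $\Phi_R$ in two stages: first between pre-valuations and $p$-filtrations, then restricting to the subsets of genuine valuations and $v$-filtrations. Given a pre-valuation $v$, I would define its associated $p$-filtration by $\fa_{v,n} := \{x \in R \mid v(x) \ge n\}$, exactly as in the discussion preceding the statement. The multiplicativity axiom (1) for pre-valuations immediately gives $\fa_{v,n} \cdot \fa_{v,m} \subset \fa_{v,n+m}$: if $v(x) \ge n$ and $v(y) \ge m$, then $v(xy) \ge v(x) + v(y) \ge n+m$. Conversely, given a $p$-filtration $\fa = \{\fa_n\}$, I would reconstruct a pre-valuation by setting $v_\fa(x) := \sup\{n \mid x \in \fa_n\}$ (with the convention that the sup is $\infty$ when $x$ lies in every $\fa_n$, and taking the value on $0$ appropriately). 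One then checks that $v_\fa$ satisfies the two pre-valuation axioms, and that the two assignments $v \mapsto \fa_v$ and $\fa \mapsto v_\fa$ are mutually inverse.

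The core verifications are routine but should be laid out carefully. For the forward axiom check, property (2) of pre-valuations ($v(x_1+x_2) \ge \min\{v(x_1),v(x_2)\}$) corresponds to the filtration property that $\fa_n$ is closed under addition, i.e. is an additive subgroup, since $x, y \in \fa_n$ forces $x+y \in \fa_n$. For the reverse direction, I would verify that $v_\fa$ is well-defined and that $v_\fa(x_1 x_2) \ge v_\fa(x_1) + v_\fa(x_2)$ follows directly from $\fa_n \cdot \fa_m \subset \fa_{n+m}$. The subadditivity-with-equality clause in axiom (2) requires a short argument: if $v_\fa(x_1) \ne v_\fa(x_2)$, say $v_\fa(x_1) < v_\fa(x_2) = n$, then $x_1 = (x_1 + x_2) - x_2$ shows $v_\fa(x_1) \ge \min\{v_\fa(x_1+x_2), v_\fa(x_2)\}$, which combined with the inequality forces $v_\fa(x_1 + x_2) = v_\fa(x_1)$.

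For the refinement to the second bijection, I would show that the three conditions in $(\dagger)$ are precisely what upgrade a pre-valuation to a genuine valuation under $\Phi_R$. The multiplicative condition $v(x_1 x_2) = v(x_1) + v(x_2)$ (equality, versus the pre-valuation inequality) corresponds exactly to the third clause of $(\dagger)$: the statement that $xy \in \fa_n$ implies $x \in \fa_i$, $y \in \fa_j$ for some $i+j = n$ is what rules out the strict inequality $v(xy) > v(x)+v(y)$. The separation condition $\cap_n \fa_{v,n} = \{0\}$ corresponds to axiom (3), namely $v(x) = \infty \iff x = 0$. I would check each of these correspondences in both directions, confirming that $v$ is a valuation if and only if $\fa_v$ is a $v$-filtration.

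The main obstacle I anticipate is not any single hard step but rather the bookkeeping in the equality clause of axiom (2) and in translating the third condition of $(\dagger)$ into the multiplicativity equality. The latter is the most delicate: one must argue that the factorization property $xy \in \fa_n \Rightarrow x \in \fa_i, y \in \fa_j$ with $i + j = n$ precisely prevents $v_\fa(xy) > v_\fa(x) + v_\fa(y)$, and conversely that strict multiplicativity of a valuation yields this factorization. Since the paper explicitly says ``It is straightforward to check the following property. We leave the details to the readers,'' I expect the intended proof is exactly this systematic axiom-by-axiom matching, with the factorization-versus-equality correspondence being the one point deserving genuine care.
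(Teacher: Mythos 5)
Your proposal is correct, and it is exactly the intended argument: the paper omits the proof entirely (``It is straightforward to check\dots\ We leave the details to the readers''), and your axiom-by-axiom matching --- $\fa_{v,n}=\{x\mid v(x)\ge n\}$ versus $v_\fa(x)=\sup\{n\mid x\in\fa_n\}$, with the equality clause of axiom (2) and the equivalence between the factorization clause of $(\dagger)$ and multiplicative equality as the only points needing care --- is the routine verification the authors have in mind. No gaps.
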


It is easy to see that p-filtrations has nice functorial properties. 
Let $\pi: R \to S$ be a surjective $k$-homomorphism between integral domains over $k$. It is natural  to compare valuations and filtrations on $R$ and $S$. First of all, given a p-filtration $\fa:=\{\fa_n \} $ of $R$, one has a p-filtration $\pi(\fa):=\{\pi(\fa)_n \}$ such that $\pi(\fa)_n:=\pi(\fa_n)$. Similarly, given a p-filtration $\fb$ on $S$, one has a p-filtration $\pi^{-1}(\fb):=\{ \pi^{-1}(\fb_n) \}$ on $R$.

\begin{lem} \label{restriction} Let $\pi: R \to S$ be a surjective $k$-homomorphism between finitely generated domain over $k$.
Let $v_R$ be a pre-valuation on $R$. Then $v_S:=\Phi^{-1}_S  (\pi ( \Phi_R (v_R)))$ is a pre-valuation. Indeed, for any $x \in R$, we have 
\[v_S(\pi(x))=\sup\{ v_R(y)| y \in \pi^{-1} \pi(x)  \} \]
\end{lem}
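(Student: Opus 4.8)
The plan is to route everything through the correspondence $\Phi$ of Proposition \ref{corr}, so that the pre-valuation axioms for $v_S$ never have to be checked by hand. Write $\fa := \Phi_R(v_R)$, so that $\fa_n = \{y \in R \mid v_R(y) \ge n\}$. First I would record that $\pi(\fa)$ is genuinely a p-filtration on $S$: since $\pi$ is a ring homomorphism, $\pi(\fa_n)\cdot \pi(\fa_m)$ is generated by elements $\pi(a)\pi(b)=\pi(ab)$ with $a\in\fa_n$, $b\in\fa_m$, whence $\pi(\fa_n)\cdot\pi(\fa_m)\subseteq \pi(\fa_n\cdot\fa_m)\subseteq \pi(\fa_{n+m})$, using $\fa_n\fa_m\subseteq\fa_{n+m}$. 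Surjectivity of $\pi$ guarantees that each $\pi(\fa_n)$ is an ideal and that $\pi(\fa_0)=\pi(R)=S$. This is exactly the observation made just before the statement. Consequently $\pi(\fa)$ lies in the image of $\Phi_S$, and $v_S:=\Phi_S^{-1}(\pi(\fa))$ is automatically a pre-valuation by Proposition \ref{corr}.

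Next I would unwind the definition of $\Phi_S^{-1}$ to extract the explicit formula. By construction of the correspondence, $\Phi_S^{-1}$ sends a p-filtration $\fb=\{\fb_n\}$ to the function $s\mapsto \sup\{n \mid s\in \fb_n\}$; one checks $\Phi_S^{-1}\circ\Phi_S=\mathrm{id}$ since $\sup\{n \mid v(s)\ge n\}=v(s)$ for $v(s)\in\Z_{\ge 0}\cup\{\infty\}$. Applying this with $\fb_n=\pi(\fa_n)$ gives $v_S(\pi(x))=\sup\{n \mid \pi(x)\in\pi(\fa_n)\}$.

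Then I would translate membership back to $R$: one has $\pi(x)\in\pi(\fa_n)$ if and only if there exists $y\in R$ with $\pi(y)=\pi(x)$ and $y\in\fa_n$, that is, some $y\in\pi^{-1}\pi(x)$ with $v_R(y)\ge n$. Hence the index set $\{n \mid \pi(x)\in\pi(\fa_n)\}$ coincides with $\{n \mid \exists\, y\in\pi^{-1}\pi(x),\ v_R(y)\ge n\}$, whose supremum is precisely $\sup\{v_R(y)\mid y\in\pi^{-1}\pi(x)\}$. This yields the claimed formula and, incidentally, shows that the value depends on $x$ only through $\pi(x)$, so $v_S$ is well defined on $S$.

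The whole argument is essentially definition-chasing, so I do not expect a serious obstacle; the one point requiring thought — the \emph{key move} rather than a true difficulty — is to resist verifying the pre-valuation axioms directly from the supremum formula. The equality clause in axiom (2), when $v_S(s_1)\ne v_S(s_2)$, is awkward to establish from a supremum that need not be attained, so I deliberately obtain all the axioms for free by passing through Proposition \ref{corr}. The only things to double-check are that $\pi$ of a v-filtration is in general merely a p-filtration (which is why the conclusion says \emph{pre-valuation} and not \emph{valuation}), and that surjectivity of $\pi$ is genuinely used, both to make $\pi(\fa_n)$ an ideal and to ensure $\pi^{-1}\pi(x)$ is nonempty so the supremum is taken over a nonempty set.
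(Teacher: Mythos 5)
Your proof is correct and follows essentially the same route as the paper: both establish that $\pi(\Phi_R(v_R))$ is a p-filtration, invoke Proposition \ref{corr} to get the pre-valuation axioms for free, and then unwind $\sup\{n \mid \pi(x)\in\pi(\fa_n)\}$ into the stated supremum formula. Your write-up merely fills in a few details (why $\pi$ of a p-filtration is a p-filtration, and the role of surjectivity) that the paper records in the surrounding discussion rather than in the proof itself.
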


\begin{proof}
First of all, $\Phi(v_R)$ is a p-filtration and hence so is $\pi \Phi(v_R)$. Then $v_S=\Phi^{-1} \pi \Phi (v_R)$ is a pre-valuation by Proposition \ref{corr}.

More explicitly, let $\fa=\Phi(v_R)$.
\[
\begin{array}{ll}
v_S(\bar{x})  & = \Psi \pi \Phi (v_R) (\bar{x}) \\
            &  = \sup\{ n | \bar{x} \in \pi(\fa_n)\} \\
            & = \sup \{n | x \in \fa_n+\ker \pi \} \\
            &= \sup \{ v_R(y)| \bar{y}=\bar{x}, y \in R \}.
  \end{array}
  \]
This completes the proof.
\end{proof}

Turning to geometric application, we usually consider valuation associated to a given weighted blowup and its restriction which we describe below.

\begin{defn}
Let $ X \subset W$ be  a  subvariety.
We have natural surjection $\pi: \Oo_W \to \Oo_X$. Moreover, let $v$ be a pre-valuation on $W$, one sees that $\Phi^{-1}_X  (\pi ( \Phi_W (v)))$ is a pre-valuation on $X$. We will denote it as $v|_X$ or $v_X$ and call it {\it the restriction of $v$ to $X$}.
\end{defn}

To justify the name of restriction, we consider the following property.

\begin{lem} \label{restr}
Let $\widetilde{W} \to W$ be a weighted blowup with exceptional divisor $\mathcal{E}$ and $\widetilde{X} \to X$ be the proper transform, which we call the weighted blowup of $X$. Suppose furthermore that $\widetilde{X}$ is normal and $E:=\mathcal{E} \cap \widetilde{X}$ is irreducible. Then the induced valuations $\mu_\mathcal{E}$ on $W$ and $\mu_E$ on $X$ satisfying $\mu_\mathcal{E}|_X= \mu_E$.
\end{lem}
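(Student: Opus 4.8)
The plan is to prove the identity value by value. By Lemma \ref{restriction} applied to the surjection $\pi\colon\Oo_W\to\Oo_X$ and the orthogonal valuation $\mu_\mathcal{E}=v_\bw$, for every $\bar{x}\in\Oo_X$ one has
\[
\mu_\mathcal{E}|_X(\bar{x})=\sup\{\,v_\bw(y)\mid y\in\Oo_W,\ \pi(y)=\bar{x}\,\},
\]
so it suffices to identify this supremum with $\mu_E(\bar{x})$. Write $p\colon\widetilde{X}\to X$ for the weighted blowup. Since $\widetilde{X}$ is normal and $E$ is a reduced irreducible divisor, $\Oo_{\widetilde{X},E}$ is a DVR and $\mu_E=\mathrm{ord}_E$ makes sense through $\mu_E(\bar{x})=\mathrm{ord}_E(p^{*}\bar{x})$. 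Working in the $U_i$-chart where $\mathcal{E}=\{\bar{x}_i=0\}$, any lift $y$ of $\bar{x}$ satisfies $\pi_\bw^{*}y=\bar{x}_i^{\,v_\bw(y)}\,\tl{y}$, so restricting to $\widetilde{X}$ gives $p^{*}\bar{x}=(\bar{x}_i|_{\widetilde{X}})^{v_\bw(y)}\cdot(\tl{y}|_{\widetilde{X}})$. Taking $\mathrm{ord}_E$ yields the basic formula
\[
\mu_E(\bar{x})=c\cdot v_\bw(y)+\mathrm{ord}_E(\tl{y}|_{\widetilde{X}}),\qquad c:=\mathrm{ord}_E(\bar{x}_i|_{\widetilde{X}}).
\]
The first point to record is that $c=1$: because $E$ is reduced, the local equation $\bar{x}_i$ of $\mathcal{E}$ restricts to a uniformizer of the DVR $\Oo_{\widetilde{X},E}$.

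With $c=1$ the formula becomes $\mu_E(\bar{x})=v_\bw(y)+\mathrm{ord}_E(\tl{y}|_{\widetilde{X}})\ge v_\bw(y)$ for every lift $y$, since the last term is non-negative. Taking the supremum over all lifts gives the easy inequality $\mu_E(\bar{x})\ge\mu_\mathcal{E}|_X(\bar{x})$.

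For the reverse inequality I must produce a lift $y$ with $v_\bw(y)=\mu_E(\bar{x})=:M$, i.e.\ a lift whose leading form $y_v$ does not vanish along $E$: by equation (1) and Convention \ref{cov} one has $\tl{y}|_E=y_v|_E$, so $\mathrm{ord}_E(\tl{y}|_{\widetilde{X}})=0$ precisely when $y_v|_E\ne0$. I build this lift by successive approximation, exactly as in the proof of the proper-transform theorem. Start from an arbitrary lift $y_0$; by the easy inequality $v_\bw(y_0)\le M$. If $v_\bw(y_i)<M$, then $\mathrm{ord}_E(\tl{y}_i|_{\widetilde{X}})=M-v_\bw(y_i)>0$, so $y_{i,v}$ vanishes along $E$. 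Fixing a $v$-basis $f_1,\dots,f_k$ of $I_X$, the proper-transform theorem says $E$ is cut out by the leading forms $f_{j,v}$, so a form vanishing on $E$ lies in the initial ideal $\langle f_{1,v},\dots,f_{k,v}\rangle$; hence $y_{i,v}=\sum_j a_{i,j}f_{j,v}$ with $a_{i,j}$ homogeneous, and $y_{i+1}:=y_i-\sum_j a_{i,j}f_j$ is again a lift of $\bar{x}$ with $v_\bw(y_{i+1})>v_\bw(y_i)$. As the weights are integers bounded above by $M$, this strictly increasing process terminates, necessarily at a lift $y_N$ with $v_\bw(y_N)=M$. Thus $\mu_\mathcal{E}|_X(\bar{x})\ge M=\mu_E(\bar{x})$, completing the proof.

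The delicate point, and the place where both hypotheses are essential, is the step asserting that a leading form vanishing on $E$ already lies in the initial ideal $\langle f_{1,v},\dots,f_{k,v}\rangle$ itself, rather than only in its saturation; equivalently, that the associated graded $\Cc[\bx]/\langle f_{1,v},\dots,f_{k,v}\rangle=\mathrm{gr}_{\mu_\mathcal{E}|_X}\Oo_X$ is an integral domain. Reducedness and irreducibility of $E$ make this ideal prime up to saturation, and normality of $\widetilde{X}$ is what excludes an embedded component at the vertex of the affine cone over $E$, so that no saturation is lost and $c=1$ holds. Without these hypotheses the iteration can stall below $M$: for the cuspidal curve $x_2^{2}=x_1^{3}$ with weight $(1,1)$ one finds $\mu_\mathcal{E}|_X(x_1)=1$ while $\mu_E(x_1)=2$ (here $\langle x_2^{2}\rangle$ is not reduced and $c=2$). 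I therefore expect the verification of this primeness/saturation of the initial ideal to be the main obstacle; granting it, the termination argument above is routine.
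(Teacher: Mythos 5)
Your strategy is genuinely different from the paper's. The paper never works in charts or with a $v$-basis here: it localizes at the generic point $\eta$ of $\mathcal{E}$, notes that $\Oo_{\widetilde{X},\bar\eta}$ is the corresponding quotient-localization so that the filtration $\fb$ computing $\mu_E$ satisfies $\fb_n=\bar{\mathfrak{m}}^n=\overline{\fa_n}$ (the image of the filtration computing $\mu_{\mathcal{E}}$), and then reads off the supremum formula of Lemma \ref{restriction} directly. You replace this with an explicit chart computation plus a successive-approximation construction of a lift realizing the supremum. That skeleton is legitimate, and your ``easy'' inequality $\mu_E(\bar x)\ge v_\bw(y)$ is correct as written (it only needs $c\ge 1$).

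However, the proof is incomplete, and the hole is exactly where you locate it --- but it cannot be granted as routine, and your proposed justification does not close it. Your two key claims, namely $c:=\mathrm{ord}_E(\bar x_i|_{\widetilde{X}})=1$ and ``a $v$-homogeneous form vanishing on $E$ lies in $\langle f_{1,v},\ldots,f_{k,v}\rangle$,'' are both equivalent to the scheme-theoretic exceptional locus $\mathcal{E}\cap\widetilde{X}$ being integral, i.e.\ to $\langle I_v\rangle$ being prime. Normality of $\widetilde{X}$ together with set-theoretic irreducibility of $E$ does not give this: for $f=x_3^2+x_1^5+x_2^5$ with weights $(1,1,2)$ the proper transform is $\bar x_3^2+\bar x_1(1+\bar x_2^5)=0$ on the $U_1$-chart, which is regular in codimension one and hence normal, and $E$ is irreducible; yet $f_v=x_3^2$, so $c=2$, and your iteration for $\bar x=x_3|_X$ stalls at $v_\bw=2$ while $\mu_E(x_3|_X)=1+2\cdot 2=5$. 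So ``$E$ is reduced, hence $c=1$'' conflates the reduced divisor $E$ with the possibly non-reduced scheme $\mathcal{E}|_{\widetilde{X}}$, and ``normality excludes an embedded component at the vertex'' is not the relevant mechanism. What is actually needed --- and what the paper uses at the step asserting that $\bar{\mathfrak{m}}$ is the maximal ideal of $\Oo_{\widetilde{X},\bar\eta}$, and records explicitly in the Corollary closing Section 2 --- is the primeness of $\langle I_v\rangle$. With that as an input, both of your steps go through and your termination argument does finish the proof.
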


\begin{proof} Let $\eta$ be the generic point (or say the prime ideal) of $\mathcal{E}$ and $\Oo_{\widetilde{W}, \eta}$ be the localization. We have a valuation $v_{\eta}$ on $\Oo_{\widetilde{W}}$ and hence  on $\Oo_W$ via the pull-back.
This gives rise to a filtration $\fa:=\Phi(v_\eta)$ on $\Oo_W$.

 Similarly, let $\bar{\eta}$ be the image of $\eta$ in $\Oo_{\widetilde{X}}$ and $\Oo_{\widetilde{X}, \bar{\eta}}$ be the localization. We have a valuation $v_{\bar{\eta}}$ on $\Oo_{\widetilde{X}}$ and on $\Oo_X$ via the pull-back.  This gives rise to a filtration $\fb:=\Phi(v_{\bar{\eta}})$ on $\Oo_X$.

 In fact, let $\mathfrak{m}$ be the maximal ideal of $\Oo_{\widetilde{W}, \eta}$, then $\fa_n=\mathfrak{m}^n$.  As the localization commutes with quotient, $\bar{\mathfrak{m}}$ is the  maximal ideal of $\Oo_{\widetilde{X}, \bar{\eta}}$ and hence $\fb_n= (\bar{\mathfrak{m}})^n = \overline{\fa}_n$.

  Note that for all $\bar{x} \in \Oo_X$,
\[ \begin{array}{ll} \mu_E(\bar{x}) &= \sup \{n | \bar{x} \in \fb_n \} = \sup\{n | \bar{x} \in \overline{\fa}_n \} \\ & = \sup\{n | y \in \fa_n, \bar{y}=\bar{x} \}=\sup \{\mu_{\mathcal{E}}(y)  |y \in \Oo_W, \bar{y}=\bar{x}  \}. \end{array} \] This completes the proof by Lemma \ref{restriction}.
 \end{proof}

\begin{defn}
An ideal $I \lhd R$ is said to be {\it prime with respect to $v$ } if $ \langle I_v \rangle$ is prime.
\end{defn}

Now we consider the restriction of valuation. 

\begin{lem}\label{restrict} Let $X \subset W$ be a subvariety defined by $I:=I_{X/W}$. Let $v=v_\bw$ be a valuation on $R_W$ associated to the weight $\bw$.  One has that
$v_X(f)=v(f)$ if and only if $f_v \not \in I_v$.
\end{lem}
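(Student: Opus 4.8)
The plan is to unwind the definition of the restricted valuation and reduce the whole statement to a clean assertion about leading forms. By Lemma~\ref{restriction} applied to the surjection $\pi\colon \Oo_W \to \Oo_X$ (whose kernel is $I$), the restriction satisfies
\[ v_X(\bar f)=\sup\{\, v(f+h) \mid h\in I \,\}, \]
where $\bar f$ denotes the image of $f$ in $\Oo_X$ and, following the abuse in the statement, I write $v_X(f):=v_X(\bar f)$. Taking $h=0$ shows $v_X(\bar f)\ge v(f)$ unconditionally, so the entire content of the lemma is the equivalence
\[ \big(\exists\, h\in I:\ v(f+h)>v(f)\big)\ \Longleftrightarrow\ f_v\in I_v. \]
I will prove this, writing $m:=v(f)$ and assuming $f\ne 0$ (the case $f=0$ being trivial). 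Then $v_X(f)=v(f)$ exactly when the left-hand event fails, i.e. when $f_v\notin I_v$.

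For the direction $f_v\in I_v \Rightarrow v_X(\bar f)>v(f)$, I would argue directly: by definition of $I_v$ there is $g\in I$ whose leading form is $f_v$, so $v(g)=m$ and $g_v=f_v$. Taking $h=-g\in I$, the weight-$m$ parts of $f$ and $h$ cancel while neither has any part of weight $<m$; hence $v(f-g)>m$, giving $v_X(\bar f)>v(f)$.

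The converse is where the real work lies, and it is driven by the sharp form of the second valuation axiom (equality of $v(x_1+x_2)$ and $\min\{v(x_1),v(x_2)\}$ when the two values differ). Suppose $h\in I$ satisfies $v(f+h)>m$, and compare $v(h)$ with $m$. If $v(h)<m$, then since $v(f)=m\ne v(h)$ the axiom forces $v(f+h)=v(h)<m$, a contradiction; if $v(h)>m$, then the weight-$m$ part of $h$ vanishes, so the weight-$m$ part of $f+h$ equals $f_v\ne 0$ and $v(f+h)=m$, again a contradiction. Thus $v(h)=m$, and the vanishing of the weight-$m$ part of $f+h$ reads $f_v+h_v=0$. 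Since $-h\in I$ has leading form $-h_v=f_v$, we conclude $f_v\in I_v$. Combining the two directions yields the claim.

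The only delicate point is this trichotomy on $v(h)$ in the converse: it is precisely what shows that one can raise the value of $f$ modulo $I$ only by cancelling its leading form $f_v$ against a \emph{genuine} leading form of $I$, never against a lower-order tail of some element of $I$. That is why the correct condition is membership $f_v\in I_v$ rather than any weaker one. It is worth noting, finally, that because $f_v$ is homogeneous of weight $m$, a short graded argument identifies the weight-$m$ part of $\langle I_v\rangle$ with $I_v$ in that degree, so $f_v\in I_v$ is equivalent to $f_v\in\langle I_v\rangle$; the statement is therefore insensitive to whether one reads $I_v$ as the set of leading forms or as the ideal it generates.
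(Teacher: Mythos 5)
Your proof is correct and follows essentially the same route as the paper: both directions reduce the statement to the observation that $v_X(\bar f)=\sup\{v(f+h)\mid h\in I\}$ can exceed $v(f)$ precisely when the leading form $f_v$ is cancelled by the leading form of an element of $I$. Your write-up merely makes explicit the trichotomy on $v(h)$ and the equivalence $f_v\in I_v \Leftrightarrow f_v\in\langle I_v\rangle$, both of which the paper's terser argument uses implicitly.
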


 \begin{proof}
 Suppose that  $f_v \in I_v$, that is $f_v=u_v$ for some $u \in I$. Then $f':=f-u \ne 0$ has the property that $f' \equiv f  \quad (\text{mod } I)$ and $v(f') > v(f)$. Hence \[v_X(f) \ge v(f')>v(f).\]

 If $v_X(f)> v(f)$, then there exists $u \in I$ such that $v(f-u)> v(f)$. This implies that $f_v=u_v \in I_v$.
 \end{proof}

\begin{prop}\label{prime}  The restriction of valuation $v$ to $v_X$ is a valuation if and only if $ \langle I_v \rangle$ is prime.
\end{prop}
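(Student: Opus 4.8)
The plan is to reduce the statement to the multiplicativity of $v_X$ and then read multiplicativity off from the primeness of $\ang{I_v}$ by means of Lemma \ref{restrict}. Recall first that $v_X$ is automatically a pre-valuation (Lemma \ref{restriction}), and that, by Krull's intersection theorem applied to the Noetherian local ring $\Oo_X$, one has $\bigcap_n \ol{\fa}_n = 0$, so $v_X(\ol f) < \infty$ for every $\ol f \ne 0$; this is the finiteness condition (3) in the definition of a valuation and holds independently of primeness. Consequently $v_X$ is a valuation precisely when the pre-valuation inequality $v_X(\ol f\, \ol g) \ge v_X(\ol f) + v_X(\ol g)$ is an equality for all nonzero $\ol f, \ol g$. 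Two standing facts will be used throughout: since $v = v_\bw$ is an honest monomial valuation on the domain $R_W$, leading forms multiply, i.e. $(fg)_v = f_v g_v$ and $v(fg) = v(f) + v(g)$; and the ideal $\ang{I_v}$ is homogeneous with $\ang{I_v} = I_v \cup \{0\}$ (every nonzero homogeneous element of $\ang{I_v}$ is the leading form of some element of $I$), so primeness of $\ang{I_v}$ may be tested on leading forms.

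For the direction ``$\ang{I_v}$ prime $\Rightarrow v_X$ a valuation,'' fix nonzero $\ol f, \ol g \in \Oo_X$. Since $v$ is $\Z_{\ge 0}$-valued and $v_X(\ol f) = \sup\{v(y) \mid \ol y = \ol f\} < \infty$, the supremum is attained by some lift $f$, and Lemma \ref{restrict} shows that the equality $v_X(f) = v(f)$ forces $f_v \notin I_v$; choose such lifts $f, g$ of $\ol f, \ol g$. Then $(fg)_v = f_v g_v$, and because $\ang{I_v}$ is prime with $f_v, g_v \notin \ang{I_v}$ we obtain $(fg)_v = f_v g_v \notin \ang{I_v} = I_v \cup \{0\}$. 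Applying Lemma \ref{restrict} once more to the lift $fg$ gives $v_X(\ol f\, \ol g) = v(fg) = v(f) + v(g) = v_X(\ol f) + v_X(\ol g)$, which is the desired multiplicativity.

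For the converse I argue by contraposition. Note that $\ang{I_v}$ is proper, since it is contained in the maximal ideal of $R_W$ at $o$: every $f \in I$, and hence every $f_v$, vanishes at $o$. If $\ang{I_v}$ is not prime then, being homogeneous, it admits homogeneous elements $\phi, \psi \notin \ang{I_v}$ with $\phi\psi \in \ang{I_v}$. Taking $\phi, \psi$ themselves as lifts (so $\phi_v = \phi$, $\psi_v = \psi$, $(\phi\psi)_v = \phi\psi$), Lemma \ref{restrict} gives $v_X(\ol\phi) = v(\phi)$ and $v_X(\ol\psi) = v(\psi)$ since $\phi, \psi \notin I_v$, whereas $(\phi\psi)_v = \phi\psi \in I_v$ forces $v_X(\ol{\phi\psi}) > v(\phi\psi) = v(\phi) + v(\psi) = v_X(\ol\phi) + v_X(\ol\psi)$. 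Thus multiplicativity fails and $v_X$ is not a valuation.

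The main obstacle is bookkeeping rather than depth: one must carefully distinguish the \emph{set} of leading forms $I_v$ (which appears in Lemma \ref{restrict}) from the \emph{ideal} $\ang{I_v}$ (which appears in the statement), and the argument hinges on the identity $\ang{I_v} = I_v \cup \{0\}$ together with $(fg)_v = f_v g_v$, which are exactly what let primeness of the ideal interact with the set-theoretic criterion of Lemma \ref{restrict}. The only genuinely analytic point is the attainment of the supremum defining $v_X$, i.e. the finiteness $v_X(\ol f) < \infty$, which I obtain from Krull's intersection theorem. Conceptually, the whole statement is the assertion that $\mathrm{gr}_{v_X}(\Oo_X) \cong R_W/\ang{I_v}$ and that a pre-valuation with $\bigcap_n \fa_n = 0$ is a valuation exactly when its associated graded ring is a domain; the argument above is the hands-on translation of that principle through Lemma \ref{restrict}.
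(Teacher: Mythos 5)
Your proof is correct and takes essentially the same route as the paper: both directions rest on Lemma \ref{restrict}, the multiplicativity of leading forms, and the fact that a homogeneous element lies in $\langle I_v\rangle$ iff it lies in $I_v$, and your contrapositive reduction to homogeneous witnesses $\phi,\psi$ is just a repackaging of the paper's truncation $f=f^-+f^+$ (your Krull-intersection check of condition (3) is a small addition the paper leaves implicit). The only blemish is the displayed identity $\langle I_v\rangle = I_v\cup\{0\}$, which is false as written since $\langle I_v\rangle$ contains inhomogeneous elements (e.g.\ $x+x^2$ for $I=(x)$) while $I_v$ consists of homogeneous ones; only the homogeneous-element version stated in your parenthetical is true, and fortunately that is all you ever use.
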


\begin{proof}
Suppose that  $ \langle I_v \rangle$ is prime. For any $\bar{f},\bar{g} \ne 0 \in R_W/I$. We pick $f', g'$  so that $f' \equiv f  (\text{mod } I)$ (resp. $g' \equiv g (\text{mod } I)$)  and $v_X(f)=v(f')$ (resp. $v_X(g)=v(g')$. Then $f'_v, g'_v \not \in I_v$ by Lemma \ref{restrict}.

Notice that a homogeneous element is contained in $\langle I_v \rangle$ if and only if it is contained in $I_v$. 
Thus,  $f'_v, g'_v \not \in \langle I_v \rangle$. It follows that $(f'g')_v=f'_v \cdot g'_v \not \in \langle I_v \rangle$. Therefore, one has
\[v_X(fg)=v_X(f'g')=v(f'g')=v(f')+v(g')=v_X(f)+v_X(g),\]
as $fg \equiv f'g' (\text{mod } I)$.

Conversely, suppose that $v_X$ is a valuation but $\langle I_v \rangle$ is not prime. There exist $f, g  \not \in \langle I_v \rangle$ but $fg \in \langle I_v \rangle$. 
We may write $f=\sum_{k=0}^\infty f_{v=k}$ and set $m_0=\min\{k| f_{v=k} \not \in I_v\}$. Let $f^-:=\sum_{k=0}^{m_0-1} f_{v=k}$ and write $f=f^-+f^+$.  Note that $f^- \in \langle I_v \rangle$ and hence $f^+ \not \in \langle I_v \rangle$ and $f^+_v \not \in I_v$. We define $g^+$ similarly so that $g^+ \not \in \langle I_v \rangle$ and $g^+_v \not \in I_v$.

 Therefore, $v_X(f^+)=v(f^+)$, $v_X(g^+)=v(g^+)$ and hence
\[ v_X(f^+g^+)=v_X(f^+) + v_X(g^+) = v(f^+)+v(g^+)=v(f^+g^+). \]
However, $fg \in \langle I_v \rangle$ implies that $f^+g^+ \in \langle I_v \rangle$ and thus $(f^+g^+)_v=f^+_v \cdot g^+_v \in \langle I_v \rangle$. It follows that $v_X(f^+g^+) > v(f^+g^+)$, which is the required contradiction.
\end{proof}

\begin{defn} Let $W$ be an affine space and let $\pi_\bw: \widetilde{W} \to W$ be a weighted blowup with weight $\bw$.  Let $\widetilde{X}$ be the proper transform of of $X$ in $\tl{W}$.
We say that the induced map $\pi_\bw|_{\widetilde{X}}:  \widetilde{X} \rightarrow X$ is a weighted blow-up of $X$ with weight $\bw$.
We usually abuse the notation of $\pi_\bw|_{\widetilde{X}}$ with $\pi_\bw$.
\end{defn}

We summarize some useful criterion
\begin{cor} Let $W$ be an affine space with a subvariety $X$. Fix a weight $\bw$ and consider the restriction of weighted blowup $\pi_\bw: \widetilde{X} \to X$. Then the following are equivalent. 
\begin{enumerate}
    \item The exceptional divisor of $E \subset \widetilde{X}$ is irreducible.
    \item The restriction $v_\bw|_X$ is a valuation on $\Oo_X$. 
    \item There is a $v_\bw$-basis $\{f_1,\ldots, f_k \}$ of $I_{X/W}$ such that $\langle f_{1,v_\bw},\ldots, f_{k, v_\bw}  \rangle $ is a prime ideal. 
    
\end{enumerate}
\end{cor}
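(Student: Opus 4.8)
The plan is to establish the three equivalences as a single implication cycle $(1)\Rightarrow(2)\Rightarrow(3)\Rightarrow(1)$, drawing on the three tools already assembled: Proposition~\ref{prime} (the restriction $v_\bw|_X$ is a valuation precisely when $\langle I_v\rangle$ is prime), Lemma~\ref{restr} (for a normal proper transform with irreducible exceptional divisor, $\mu_{\mathcal{E}}|_X=\mu_E$), and the structure theorem computing the proper transform and its exceptional set through a $v$-basis. The key bookkeeping observation that glues these together is that, by the very definition of a $v$-basis $\{f_1,\ldots,f_k\}$, the initial forms satisfy $\langle f_{1,v},\ldots,f_{k,v}\rangle=\langle I_v\rangle$; thus a condition on $\langle I_v\rangle$ and the corresponding condition on the initial forms of any $v$-basis are interchangeable, and a finite $v$-basis exists by the Noetherian property of $R_W$.

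For $(2)\Rightarrow(3)$ I would argue purely algebraically. If $v_\bw|_X$ is a valuation, then $\langle I_v\rangle$ is prime by Proposition~\ref{prime}. Picking any finite $v$-basis $\{f_1,\ldots,f_k\}$ of $I_{X/W}$, the definition gives $\langle f_{1,v},\ldots,f_{k,v}\rangle=\langle I_v\rangle$, which is prime; this is exactly $(3)$. For $(3)\Rightarrow(1)$ I would feed the $v$-basis of $(3)$ into the structure theorem: the exceptional set of $\widetilde{X}\to X$ is cut out by $\{f_{1,v},\ldots,f_{k,v}\}$ inside the weighted projective space $\Pp(a_1,\ldots,a_n)$. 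Since the defining homogeneous ideal $\langle f_{1,v},\ldots,f_{k,v}\rangle$ is prime, this subscheme is integral, so $E$ is irreducible.

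The remaining and most delicate implication is $(1)\Rightarrow(2)$, where I would route through the geometry rather than the algebra in order to sidestep reducedness issues. Recall that the orthogonal valuation $v_\bw$ coincides with $\mu_{\mathcal{E}}$, the divisorial valuation of the exceptional divisor $\mathcal{E}$ of $\pi_\bw:\widetilde{W}\to W$. If $E=\mathcal{E}\cap\widetilde{X}$ is irreducible, its generic point $\bar\eta$ carries a divisorial valuation $\mu_E$ on the function field $K(X)=K(\widetilde{X})$ (the blowup being birational), and Lemma~\ref{restr} identifies $\mu_{\mathcal{E}}|_X=\mu_E$; hence $v_\bw|_X=\mu_E$ is a genuine valuation, giving $(2)$.

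The hard part will be ensuring the hypotheses of Lemma~\ref{restr} are met, namely that $\widetilde{X}$ is normal so that $\Oo_{\widetilde{X},\bar\eta}$ is a discrete valuation ring and $\mu_E$ is well defined. It is precisely this point that forces the passage through Lemma~\ref{restr} instead of a direct $(1)\Rightarrow(3)$: set-theoretic irreducibility of $E$ only guarantees that $\sqrt{\langle I_v\rangle}$ is prime, which is weaker than the primality of $\langle I_v\rangle$ demanded in $(3)$. I would therefore either incorporate normality into the standing hypotheses---automatic in the intended application to terminal threefolds, where $\widetilde{X}=Y$---or establish the weaker Serre condition $R_1$ at $\bar\eta$ directly (which is in fact all that the proof of Lemma~\ref{restr} uses, since it localizes at $\bar\eta$), after which the valuation $\mu_E$, and with it the implication $(1)\Rightarrow(2)$, follows.
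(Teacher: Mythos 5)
The paper itself offers no proof of this corollary; it is presented as a summary of Proposition~\ref{prime}, the structure theorem on proper transforms, and Lemma~\ref{restr}, and your assembly of exactly those three ingredients is certainly the intended argument. Your implications $(2)\Rightarrow(3)$ and $(3)\Rightarrow(1)$ are correct and complete: by definition a $v$-basis has $\langle f_{1,v},\ldots,f_{k,v}\rangle=\langle I_v\rangle$, so $(2)\Leftrightarrow(3)$ is literally Proposition~\ref{prime} plus the existence of a finite $v$-basis, and $(3)\Rightarrow(1)$ follows from the structure theorem since a subscheme cut out by a prime ideal is irreducible.

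The gap is in $(1)\Rightarrow(2)$, and you have correctly located it but your proposed repair does not close it. Set-theoretic irreducibility of $E$ only yields that $\sqrt{\langle I_v\rangle}$ is prime, and neither normality of $\widetilde{X}$ nor the condition $R_1$ at the generic point of $E$ rules out the genuinely bad case, namely that $\langle I_v\rangle$ is primary but not prime. Concretely, take $X=V(y^2-x^3-z^7)\subset\A^3$ with $\bw=(1,1,1)$. Then $\{f\}$ is a $v$-basis, $\langle I_v\rangle=(y^2)$ is primary and not prime, the exceptional set is $V(y^2)\subset\Pp^2$, which is set-theoretically an irreducible line, and $\widetilde{X}$ is normal (a hypersurface with isolated singularities). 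Yet $v_\bw|_X$ is not a valuation: $v_\bw|_X(y)=1$ while $v_\bw|_X(y^2)=v_\bw|_X(x^3+z^7)=3$. The mechanism is that $\mathcal{E}$ restricts to $\widetilde{X}$ with multiplicity $2$ along $E$, so $\mu_{\mathcal{E}}|_X$ and the divisorial valuation $\mu_E$ differ even after normalization; this also shows that Lemma~\ref{restr}, on which you lean, implicitly requires $\mathcal{E}\cap\widetilde{X}$ to be generically reduced, not merely irreducible, since its proof identifies the image of the maximal ideal of $\Oo_{\widetilde{W},\eta}$ with the maximal ideal of $\Oo_{\widetilde{X},\bar{\eta}}$.

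The correct reading, consistent with how the corollary is actually used later in the paper (only in the direction primality of the initial ideal $\Rightarrow$ irreducibility, e.g.\ in Proposition~\ref{reg_seq} and Theorem~\ref{smooth}), is that ``irreducible'' in $(1)$ must be taken scheme-theoretically: the exceptional set with the structure given by $\langle f_{1,v},\ldots,f_{k,v}\rangle$ is integral. Under that reading $(1)$ becomes a restatement of $(3)$ via the structure theorem, the whole corollary reduces to Proposition~\ref{prime}, and no appeal to Lemma~\ref{restr} or to normality is needed. You should either adopt that reading or add generic reducedness of $E$ as a hypothesis; as written, your route through Lemma~\ref{restr} inherits the same unstated hypothesis and the implication $(1)\Rightarrow(2)$ remains unproved.
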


\section{tilting and isomorphism of weighted blowups}
The purpose of this section is to introduce the notion of {\it tilting of weighted blowups}. As an application, we show that any threefold divisorial contraction to a curve can be realized as a weighted blowup.

\begin{lem} \label{tilting}
Let $R$ be a domain over $k$,  $v$ be a pre-valuation on $R$, and $\fa$ be its associated p-filtration.
Fix an integer $a \ge 0$, we can define a pre-valuation $v^\sharp$ on $R[x]$ or $R[[x]]$ by
\[v^\sharp(\sum_{i  \ge 0} c_i x^i)= \min\{  v(c_i)+ia \}_{i \ge 0}.\]
Then its associated p-filtration $\fa^\sharp$ is given by $\fa^\sharp_n= \sum_{i+aj \ge n} \fa_i[x] \cdot (x^j) $.

Pick  any $s \in R$. We consider the surjective ring homomorphism $\pi: R[x] \to R$ that maps $x$ to $s$. Let $\fa^+:=\pi(\fa^\sharp)$ be the associated p-filtration of $v^+:=v^\sharp|_R$. Then the following holds.
\begin{enumerate}
\item   $\fa^+_{n} \supset \fa_n$ for all $n$. We may write   $v^+ \succeq v$ for their associated pre-valluations.

\item If $a > v(s)$,  then $v^+ \ne v$.
\item If $a \le v(s)$, then $v^+=v$.
\end{enumerate}
\end{lem}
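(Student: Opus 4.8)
The engine of the whole lemma is the explicit restriction formula of Lemma \ref{restriction}. Applying it to the surjection $\pi \colon R[x] \to R$, $x \mapsto s$, equipped with the pre-valuation $v^\sharp$, gives for every $y \in R$
\[ v^+(y) = \sup\{\, v^\sharp(P) \mid P \in R[x],\ P(s) = y \,\}. \]
My plan is to read off all three items from this single identity. The pleasant feature is that I never need the supremum to be attained: the lower bounds come from testing against one cleverly chosen polynomial, while the only upper bound I need (in item (3)) is a uniform estimate holding for every competitor $P$.

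Before using the formula I would first settle the two structural assertions folded into the statement. That $v^\sharp$ is a pre-valuation is a routine check. For the product axiom, writing $P = \sum_i c_i x^i$ and $Q = \sum_j d_j x^j$, the coefficient $e_k = \sum_{i+j=k} c_i d_j$ of $PQ$ satisfies $v(e_k) \geq \min_{i+j=k}(v(c_i) + v(d_j))$, and the elementary identity $\min_{i,j}(f(i) + g(j)) = \min_i f(i) + \min_j g(j)$ then yields $v^\sharp(PQ) \geq v^\sharp(P) + v^\sharp(Q)$. For the additive axiom, the only point requiring attention is the equality clause when $v^\sharp(P) \neq v^\sharp(Q)$: one selects an index $i_0$ realizing the smaller weight, observes that the two coefficients at $i_0$ then have distinct $v$-values, and invokes the corresponding equality clause of $v$ at that coefficient. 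The filtration identity $\fa^\sharp_n = \sum_{i + aj \geq n} \fa_i[x]\cdot(x^j)$ follows by unwinding $v^\sharp(P) \geq n \iff v(c_k) + ka \geq n \iff c_k \in \fa_{n-ka}$ for all $k$.

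I would then turn to the three items. For (1), testing the supremum against the constant polynomial $P = y$ gives $v^+(y) \geq v^\sharp(y) = v(y)$, i.e.\ $\fa^+_n \supseteq \fa_n$. For (3), I would run the reverse estimate: for any $P = \sum_i c_i x^i$ with $P(s) = \sum_i c_i s^i = y$, the pre-valuation axioms give $v(y) \geq \min_i v(c_i s^i) \geq \min_i(v(c_i) + i\,v(s))$, and when $a \leq v(s)$ each summand dominates $v(c_i) + ia$, so $v(y) \geq \min_i(v(c_i) + ia) = v^\sharp(P)$. As this holds for every competitor, $v^+(y) \leq v(y)$, and together with (1) this forces $v^+ = v$. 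For (2), I would test the supremum at $y = s$ against $P = x$: since $v(1) = 0$ (forced by super-additivity and non-negativity), $v^\sharp(x) = a$, whence $v^+(s) \geq a > v(s)$ and therefore $v^+ \neq v$.

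The one genuinely delicate point is the chain of inequalities in (3). Because $v$ is only a pre-valuation, $v(c_i s^i) \geq v(c_i) + i\,v(s)$ is super-additivity rather than an equality, so I must confirm the inequality runs in the direction that is useful — which it does. I would also note the mild book-keeping for the $R[[x]]$ version: the defining minimum then ranges over infinitely many indices, but since $a \geq 0$ and $v$ takes values in the well-ordered set $\Z_{\geq 0} \cup \{\infty\}$ the minimum is always attained, so all the computations transcribe unchanged.
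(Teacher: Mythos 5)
Your proposal is correct and follows essentially the same route as the paper: the restriction formula $v^+(y)=\sup\{v^\sharp(P)\mid P(s)=y\}$, the constant polynomial for (1), the test polynomial $x$ for (2), and the chain $v^\sharp(P)=\min_i(v(c_i)+ia)\le\min_i(v(c_i)+iv(s))\le v(y)$ for (3). You merely supply a few details the paper leaves as "direct checks" (the pre-valuation axioms for $v^\sharp$ and the observation $v(1)=0$), which are all verified correctly.
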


\begin{proof}One can check that $v^\sharp$ is a pre-valuation directly by definition. It is also straightforward to determine $\fa^\sharp$.
 Note that  $v^\sharp|_R(u)=\sup\{v^\sharp(y)| y \in R[x], \pi(y)=u\} \ge v^\sharp(u)=v(u)$. This proved the first statement.

  Indeed, if $a >v(s)$, then $v^\sharp|_R(s) \ge v^\sharp(x)=a > v(s)$, which is the second statement.

 For the last statement, suppose on the contrary that $a \le v(s)$ but $v^\sharp|_R(u) > v(u)$ for some $u\in R$. That is, there is $y=\sum c_i x^i$ so that $u=\sum c_i s^i$ and $v^\sharp(y)>v(u)$. Thus
 \[ v^\sharp(y)=\min\{ v(c_i)+ia \}_{i \ge 0} \le \min\{ v(c_i)+i v(s) \}_{i \ge 0} \le v(u), \] leads to the required contradiction.
\end{proof}

\begin{defn}
We call the pre-valuation $v^+$ defined above is a {\it tilting} of $v$ by $(s,a)$  if $a >v(s)$. Likewise, we call the p-filtration $\fa^+$ defined above is a {\it tilting} of $\fa$ by $(s,a)$.
\end{defn}

\begin{thm}\label{approx} Let $R$ be a Noetherian integral domain over $k$. Given two p-filtration $\fa$ and $\fb$ such that $ \fb  \succeq \fa$ and $\fb_1=\fa_1$.
Suppose that the graded ring $R^*:=\oplus_{n \ge 0} \fb_n/\fb_{n+1}$ is a finitely generated $R/\fb_1$-algebra. Then $\fb$ can be approximated by finitely many tilting of $\fa$.
\end{thm}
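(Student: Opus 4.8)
The plan is to build up the filtration $\fb$ from $\fa$ one generator at a time, using the finite generation of the associated graded ring $R^*$ to guarantee that only finitely many tiltings are needed. The key observation is that each tilting by $(s,a)$ (with $a > v(s)$) enlarges the filtration in a controlled way, and the obstruction to $\fb = \fa$ is measured precisely by those homogeneous elements of $R^*$ that ``jump'': elements $\bar{g} \in \fb_n/\fb_{n+1}$ whose representative $g$ has $v_\fa(g) < n$, i.e. lies in a strictly lower $\fa$-level. Since $\fb \succeq \fa$ and $\fb_1 = \fa_1$, the two filtrations agree in degree $1$, so all the discrepancy lives in higher degrees and is captured by the grading.

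\emph{First I would} choose a finite set of homogeneous algebra generators $\bar{s}_1, \ldots, \bar{s}_m$ of $R^* = \oplus_{n} \fb_n/\fb_{n+1}$ over $R/\fb_1$, say with $\bar{s}_\ell \in \fb_{a_\ell}/\fb_{a_\ell + 1}$, and lift each to some $s_\ell \in \fb_{a_\ell} \subset R$. The claim I would aim to prove is that the filtration obtained from $\fa$ by successively tilting by $(s_1, a_1), \ldots, (s_m, a_m)$ equals $\fb$. Each individual tilting is legitimate: since $\bar{s}_\ell \ne 0$ in $\fb_{a_\ell}/\fb_{a_\ell+1}$ but $s_\ell$ need not have reached $\fa$-level $a_\ell$, we have $a_\ell > v_\fa(s_\ell)$ whenever $s_\ell$ genuinely jumps, which is exactly the condition for a valid tilting in the preceding definition. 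One must be slightly careful with generators that do not jump (where $a_\ell = v_\fa(s_\ell)$, giving $v^+ = v$ by part (3) of Lemma \ref{tilting}); those simply leave the filtration unchanged and can be discarded.

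\emph{Next I would} verify the two inclusions separately. For $\fa^+ \preceq \fb$, i.e. that tilting never overshoots, I would use that $s_\ell \in \fb_{a_\ell}$ together with the multiplicativity $\fb_i \cdot \fb_j \subset \fb_{i+j}$ to show that the image under $\pi$ of the generator $x_\ell^j$ lands in $\fb_{j a_\ell}$; combined with $\fa \subset \fb$ this forces the tilted levels $\fa^+_n$ to remain inside $\fb_n$ at every stage. For the reverse inclusion $\fb \subseteq \fa^+$ after all tiltings, I would argue by induction on the degree $n$: given $g \in \fb_n$, its class $\bar{g} \in \fb_n/\fb_{n+1}$ is a polynomial in the $\bar{s}_\ell$ with coefficients in $R/\fb_1$, so subtracting the corresponding product of the $s_\ell$ (which lies in the tilted filtration at level $n$ by construction) pushes $g$ into $\fb_{n+1}$, and one repeats. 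Here finite generation of $R^*$ is what makes the generating set finite, hence the number of tiltings finite; Noetherianity of $R$ and completeness of the filtration ($\cap \fb_n = 0$, which holds since $\fb$ is a $v$-filtration, or more care is needed if it is only a $p$-filtration) close up the induction into an actual equality rather than a limiting statement.

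\emph{The hard part will be} the reverse inclusion, specifically controlling the convergence of the inductive subtraction process: the argument produces, for each $g \in \fb_n$, a sequence of approximations differing by successively higher-filtration elements, and one must ensure these assemble into a genuine element of $\fa^+_n$ rather than merely a formal limit. This is where I expect the finite generation of $R^*$ and the passage to the completed ring $R[[x]]$ in Lemma \ref{tilting} to do the essential work, in exact parallel with the convergence argument used to establish the $v$-basis theorem earlier (the telescoping $u = \sum_i a_{i,j} f_j$ with strictly increasing valuations). A secondary subtlety is bookkeeping the multi-variable tilting: strictly speaking each tilting adds one new variable, so after $m$ steps one is working over $R[x_1, \ldots, x_m]$ (or its completion) and specializing all the $x_\ell \mapsto s_\ell$ simultaneously; I would either iterate the single-variable Lemma \ref{tilting} $m$ times, checking at each stage that the hypotheses ($a_\ell > v(s_\ell)$ for the current intermediate pre-valuation) are preserved, or state and use a straightforward multivariable analogue.
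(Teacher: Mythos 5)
Your strategy is genuinely different from the paper's. You fix once and for all a finite set of homogeneous algebra generators of $R^*$, lift them to $R$, and tilt by exactly those; the paper instead tilts repeatedly by \emph{arbitrary} elements of $\fb_n\setminus\fa^{+k}_n$ for each degree $n\le N$ (where $N$ is chosen so that $R^*$ is generated in degrees $\le N$), uses the ascending chain condition on the ideals $\fa_n\subsetneq\fa^+_n\subsetneq\cdots\subseteq\fb_n$ of the Noetherian ring $R$ to force termination with $\fa^{+M}_n=\fb_n$ for $n\le N$, and then propagates equality to all $n>N$ through the purely algebraic identity $\fb_{n+1}=\sum_{i+j=n+1}\fb_i\cdot\fb_j$. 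The paper's route involves no limiting process at any stage, which is precisely what your route cannot avoid.

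The gap sits exactly where you flagged ``the hard part,'' but the fix you propose does not work. Your inductive subtraction only yields $\fb_n\subseteq\fa^+_n+\fb_{n+k}$ for every $k$; to conclude $\fb_n\subseteq\fa^+_n$ you need the ideal $\fa^+_n$ to be closed in the topology defined by $\{\fb_k\}$, i.e. $\bigcap_k\bigl(\fa^+_n+\fb_k\bigr)=\fa^+_n$. This is \emph{not} a consequence of the separatedness condition $\bigcap_k\fb_k=0$ that you invoke: in $R=k[t]$ with $\fb_k=(t^k)$ one has $\bigcap_k\bigl((t-1)+(t^k)\bigr)=R\neq(t-1)$, since $t^k\equiv 1 \pmod{t-1}$. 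Closedness of ideals requires a Krull-intersection argument, available only when $\fb_1$ lies in the Jacobson radical (e.g.\ $R$ local with the filtration centered at the closed point), whereas the theorem is stated for an arbitrary Noetherian domain; and even in the local case you would still need to show $\fb_k\subseteq\mathfrak{m}^{g(k)}$ with $g(k)\to\infty$ (which does follow from finite generation of $R^*$ via $\fb_k\subseteq\fb_1^{\lceil k/N\rceil}$) and then apply Krull in $R/\fa^+_n$ --- none of which appears in your sketch. Either adopt the paper's two-step argument (ACC in degrees $\le N$, then $\fb_{n+1}=\sum\fb_i\fb_j$ beyond $N$), or restrict to the complete local setting and supply the Artin--Rees/Krull step explicitly.
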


\begin{proof}
Since $R^*$ is finitely generated, there exists $N$ such that $R^*_n$ is generated by pieces of smaller degree for $n >N$. In particular,  one has that $\sum_{i+j=n+1} \fb_i/\fb_{i+1} \times \fb_j/\fb_{j+1} \to \fb_{n+1}/\fb_{n+2}$ is surjective for $n \ge N$.
In particular,
\begin{equation}
\sum_{i+j=n+1} \fb_i \cdot\fb_j =\fb_{n+1}
\end{equation}
for $n \ge N$.

Suppose that $\fb_2  \supsetneq \fa_2$.
Pick $s \in \fb_2-\fa_2$. Then $a_1:=v_{\fb}(s) \ge 2 > v_{\fa} (s)$. We consider the tilting $\fa^+$ of $\fa$ by $(s, a_1)$.
By construction, we have that $\fa^{+} \succeq \fa$ and $ \fa^+_2 \supsetneq \fa_2$. Moreover,
for any $y=\sum c_i x^i$ such that $u=\sum c_i s^i=\pi(y) \in R$. Then
 \[\begin{array}{ll}
 v^+(u) \le  v^\sharp(y) &=\min\{ v_{\fa}(c_0), v_{\fa}(c_1)+a_1,\ldots, v_{\fa}(c_n)+na_1 \}  \\
               & \le \min\{ v_{\fb}(c_0), v_{\fb}(c_1)+v_{\fb}(s) ,\ldots, v_{\fb}(c_n)+n v_{\fb}(s) \} \le v_{\fb}(u).
  \end{array}\]
 Hence $v^+(u) \le v_{\fb}(u)$ for all $u \in R$ and it follows that $\fb \succeq \fa^+$. Inductively, we have an ascending chain of filtrations
 \[ \fb  \succeq \cdots \succeq \fa^{++} \succeq \fa^+ \succeq \fa\]
 yield an ascending chain of ideals $ \fb_2  \supsetneq \cdots \supsetneq  \fa^+_2 \supsetneq \fa_2$. Since $R$ is Notherian, we see that $\fb_2=\fa^{+m_2}_2$ after $m_2$-steps of tilting for some $m_2$.

 Inductively, after $M:=m_2+\cdots + m_N$-steps of tilting, we have a p-filtration $\fa^{+M}$ such that $\fb \succeq \fa^{+M} \succeq \fa$ and $\fb_n = \fa^{+M}_n$ for all $n \le N$.
 Then by  (\ref{fg}),
\[ \sum_{i+j=N+1} \fa^{+M}_i \cdot  \fa^{+M}_j \subset \fa^{+M}_{N+1} \subset \fb_{N+1} =  \sum_{i+j=N+1} \fb_i \cdot \fb_j=  \sum_{i+j=N+1} \fa^{+M}_i  \cdot \fa^{+M}_j.\]
It follows that $\fa^{+M}_{N+1} = \fb_{N+1}$. By the same argument inductively, one has $\fa^{+M}_{m} = \fb_{m}$ for all $m >N$. Hence $\fb=\fa^{+M}$.
This completes the proof.
\end{proof}

\begin{prop} \label{fg}
Let $p \in X$ be a germ of algebraic variety and
 $\phi: Y \to X\ni p $ be a divisorial contraction.
Assume that both $Y$ and $X$ has at worst klt singularities such that $X$ is Gorenstein. 	Let $E$ be the exceptional divisor.
Then   $R_C:=\bigoplus_{m\geq0} H^0(E,-mE|_E)$ is a finitely generated $\Oo_C$-algebra and 	$R_X:=\bigoplus_{m\geq 0}\phi\ts\Oo_Y(-mE)/\phi\ts\Oo_Y(-(m+1)E)$ is a finitely generated graded $\Oo_X$-algebra.
\end{prop}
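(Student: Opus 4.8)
The plan is to reduce both finiteness statements to the \emph{relative ampleness} of $-E$, together with the standard fact that the relative section ring of a relatively ample $\Q$-Cartier divisor over an affine (Noetherian) base is a finitely generated algebra.

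First I would record that $-E$ is $\phi$-ample and $\Q$-Cartier. Since $\phi$ is a divisorial contraction it is an extremal contraction with $\rho(Y/X)=1$, and its exceptional prime divisor $E$ is negative on the curves contracted by $\phi$; as the relative Picard number is $1$, this forces $-E$ to be $\phi$-ample. Moreover one has $K_Y=\phi^*K_X+aE$ with $a>0$ (the discrepancy of the exceptional divisor of a divisorial contraction between klt pairs is positive). Here $\phi^*K_X$ makes sense and $aE$ is $\Q$-Cartier, because $K_X$ is Cartier ($X$ is Gorenstein) and $K_Y$ is $\Q$-Cartier ($Y$ is klt); dividing by $a$ shows that $E$ itself is $\Q$-Cartier. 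This is the precise point at which the Gorenstein hypothesis (or at least $\Q$-Cartieredness of $K_X$) is used.

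For the statement about $R_X$, set $I_m:=\phi_*\Oo_Y(-mE)\subset\Oo_X$, using $\phi_*\Oo_Y=\Oo_X$ ($X$ is normal) and $mE\ge 0$; then $\{I_m\}$ is a $v$-filtration of $\Oo_X$ and $R_X=\bigoplus_{m\ge 0}I_m/I_{m+1}$ is its associated graded ring. I would first prove that the relative section ring $\mathcal{R}:=\bigoplus_{m\ge 0}I_m=\bigoplus_{m\ge 0}\phi_*\Oo_Y(-mE)$ is a finitely generated $\Oo_X$-algebra: this is exactly the relative section ring of the $\phi$-ample $\Q$-Cartier divisor $-E$ over the affine germ $X$, so finite generation follows by passing to a Veronese $\mathcal{R}^{(l)}=\bigoplus_m\phi_*\Oo_Y(-mlE)$ with $-lE$ Cartier and $\phi$-very ample (where finite generation is the usual relative $\mathrm{Proj}$ statement) and observing that $\mathcal{R}$ is a finite $\mathcal{R}^{(l)}$-module. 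Then I would exhibit $R_X$ as a graded quotient of $\mathcal{R}$: setting $\mathfrak{I}_m:=I_{m+1}\subset I_m=\mathcal{R}_m$ gives a homogeneous ideal $\mathfrak{I}\lhd\mathcal{R}$, since $I_n\cdot I_{m+1}\subset I_{n+m+1}$, and $\mathcal{R}/\mathfrak{I}=\bigoplus_m I_m/I_{m+1}=R_X$. Hence $R_X$ is finitely generated over $\Oo_X$.

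For the statement about $R_C$, I would restrict $\phi$ to the exceptional divisor. The morphism $\phi|_E\colon E\to C$ is projective, and $-E|_E$ is the restriction of the $\phi$-ample $\Q$-Cartier divisor $-E$, hence is $\phi|_E$-ample and $\Q$-Cartier on $E$. Therefore $R_C=\bigoplus_{m\ge 0}H^0(E,-mE|_E)$ is the relative section ring of a relatively ample divisor over the affine germ $C$, and the same Veronese argument shows it is a finitely generated $\Oo_C$-algebra. The main obstacle I anticipate is precisely this relative finite generation for a $\Q$-Cartier \emph{Weil} divisor rather than a line bundle: one must pass to a Veronese on which the divisor becomes an honest relatively very ample Cartier divisor and then argue that the full ring is a finite module over that Veronese, using Serre-type finiteness of the relative pushforwards together with relative ampleness; for $R_C$ one must in addition handle the divisorial sheaves $\Oo_E(-mE|_E)$ on the possibly non-normal surface $E$, which is again resolved by working on the Veronese where the relevant sheaf is a genuine $\phi|_E$-ample line bundle.
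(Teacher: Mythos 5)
Your argument is correct, but it takes a genuinely different route from the paper's. For $R_X$ the paper never considers the full section ring $\bigoplus_{m}\phi_*\Oo_Y(-mE)$: instead it applies Kawamata--Viehweg vanishing to the exact sequence $0\to\Oo_Y(-(m+1)E)\to\Oo_Y(-mE)\to\Oo_E(-mE)\to 0$ (writing $-(m+1)E=K_Y-(m+1+a(E,X))E-\phi^*K_X$) to identify $\phi_*\Oo_Y(-mE)/\phi_*\Oo_Y(-(m+1)E)$ with $\phi_*\Oo_E(-mE)$, obtains a surjection $R_C\otimes\Oo_X\to R_X$, and deduces finite generation of $R_X$ from that of $R_C$. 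Your observation that $R_X$ is simply the quotient of $\mathcal{R}=\bigoplus_m I_m$ by the homogeneous ideal $\bigoplus_m I_{m+1}$ bypasses the vanishing theorem entirely and is more economical, at the cost of losing the identification of the graded pieces of $R_X$ with $\phi_*\Oo_E(-mE)$, which is the geometric content of the paper's argument. For $R_C$ the paper also avoids your ``finite module over a Cartier Veronese'' mechanism: it chooses coprime $n,n'$ with $-nE|_E$ and $-n'E|_E$ very ample over $C$ (the existence of the second uses the base point free theorem applied to $D=-E+\phi^*H$), and uses surjectivity of the multiplication maps $H^0(E,-anE|_E)\otimes H^0(E,-bn'E|_E)\to H^0(E,-(an+bn')E|_E)$ for $a,b\gg1$ to conclude that the subring generated by the two Veronese subrings contains all but finitely many graded pieces. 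Your single-Veronese route does work, but the step you flag as the main obstacle must actually be carried out: one checks $\Oo_E(-(ml+j)E)\cong\Oo_E(-jE)\otimes\bigl(\Oo_Y(-lE)^{\otimes m}|_E\bigr)$ by tensoring the defining exact sequence with the invertible sheaf $\Oo_Y(-lE)^{\otimes m}$, after which relative Serre finiteness makes each $\bigoplus_m H^0(E,\Oo_E(-jE)\otimes\mathcal{M}^m)$ a finite module over the Cartier Veronese; the same device handles $\mathcal{R}$ itself on the normal variety $Y$. Finally, both proofs rest on the same input that $E$ is $\Q$-Cartier and $-E$ is $\phi$-ample; your justification via $K_Y=\phi^*K_X+aE$ with $a>0$ and $\rho(Y/X)=1$ is the standard one, and the paper uses this implicitly in both the vanishing step and the choice of $D=-E+\phi^*H$.
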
	

\begin{proof}
 For any $m\geq 0$ consider the exact sequence
	\[ 0\rightarrow \Oo_Y(-(m+1)E)\rightarrow \Oo_Y(-mE)\rightarrow \Oo_E(-mE)\rightarrow 0.\]
	Since $R^1\phi\ts\Oo_Y(-(m+1)E)=R^1\phi\ts\Oo_Y(K_Y-(m+1+a(E,X))E-\phi\st K_X)=0$ by the Kawamata-Viehweg vanishing theorem, it follows
	that \[ \phi\ts\Oo_E(-mE)\cong \phi\ts\Oo_Y(-mE)/\phi\ts\Oo_Y(-(m+1)E).\] After restricting to an affine neighborhood of $p\in X$, we may assume that
	$\phi\ts\Oo_E(-mE)$ is generated by global sections. Hence we have a surjection
	\[ H^0(E,-mE|_E)\otimes\Oo_X \cong H^0(X,\phi\ts\Oo_E(-mE))\otimes\Oo_X\rightarrow \phi\ts\Oo_E(-mE)\] which induces a surjection
	\[R_C \otimes \Oo_X =\left(\bigoplus_{m\geq0} H^0(E,-mE|_E)\right)\otimes\Oo_X\rightarrow\bigoplus_{m\geq 0}\phi\ts\Oo_Y(-mE)/\phi\ts\Oo_Y(-(m+1)E)=R_X.\]
	We need to show that $R_C$ is finitely generated as a graded $\Oo_C$-algebra, which implies that
	$R_X$ is a finitely generated graded $\Oo_X$-algebra.\par
	
		To this end, let $R_{(n)}=\bigoplus_{i\geq0} H^0(E,-inE|_E)$. We can find
	$n$ so that $-nE|_E$ is very ample over $C$ and so that $R_{(n)}$ is a finitely generated $\Oo_C$-algebra. On the other hands, we can choose
	an ample divisor $H$ on $X$ such that $D=-E+\phi\st H$ is ample. It follows that $mD$ is base-point free for all $m\gg1$ by the base-point
	free theorem. Hence $\big|-mE|_E\big|=\big|-mD|_E\big|$ is base-point free for all $m\gg1$. Thus we can find $n'$ such that $n$ and $n'$
	are relatively prime, and $-n'E|_E$ is very ample, hence the graded ring $R_{(n')}$ is also a finitely generated $\Oo_C$-algebra.\par
	Now Lemma \ref{fg} yields that \[H^0(E,-anE|_E)\otimes H^0(E,-bn'E|_E)\rightarrow H^0(E,-(an+bn')E|_E)\]
	is surjective for $a$, $b\gg1$. It follows that the subring generated by $R_{(n)}$ and $R_{(n')}$
	is finitely generated and contains all but finitely many homogeneous components of $R$. Thus $R_C=\bigoplus_{m\geq0} H^0(E,-mE|_E)$ is
	a finitely generated $\Oo_C$-algebra. This finishes the proof.
\end{proof}

\begin{defn}
	Let $X$ be a variety and let $\mu$ be a pre-valuation on $X$. We say that an orthogonal valuation $(W,v)$ represents $\mu$ if there exists an embedding $X\hookrightarrow W$ such that:
	\begin{enumerate}
         \item $v_X=\mu$, and moreover
	\item there exists an affine subspace $W_0$ containing $X$ so that $v(u)=0$ for $u \ne 0 \in \Oo_{W_0}$. 
	\end{enumerate}
 
We will fix  $W_0$ to be  the smallest affine subspace of $W$ containing $X$ with above property and call it the {\it core of $X$}.

\end{defn}

\begin{thm} \label{wbu}
Let $p \in X$ be a germ of Gorenstein singularity and  $\phi: Y \to X \ni p$ be a divisorial contraction with exceptional divisor $E$.  Then $\phi$ can be realized as a weighted blowup.

More precisely, there is an embedding $X  \hookrightarrow W_0$ into a $4$-dimensional affine space $W_0$ and a further embedding $W_0  \hookrightarrow W$ into an affine space $W$. On $W$,  there is an orthogonal valuation represents  $\mu_E$, where $\mu_E$ is the valuation on $X$ induced by $E$.
\end{thm}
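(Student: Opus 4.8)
The plan is to deduce the theorem from the abstract approximation result of Theorem \ref{approx} together with the finite generation of Proposition \ref{fg}, by means of a dictionary that converts a single tilting of valuations on $\Oo_X$ into the adjunction of one coordinate to the ambient affine space. The conceptual content is that $\mu_E$ differs from the ``naive'' orthogonal valuation only by finitely much data, and each missing datum is recorded by one new variable.

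First I would fix the embedding $X \hookrightarrow W_0 := \A^4$ furnished by the classification of Gorenstein terminal singularities (an isolated $cDV$ singularity is a hypersurface in $\A^4$); then $W_0$ is the smallest affine subspace containing $X$, i.e. the \emph{core}. Let $\bar x_1,\ldots,\bar x_4$ be the images of the coordinates, set the initial weights $a_i := \mu_E(\bar x_i) \in \Z_{\ge 0}$, and let $v_0 = v_\bw$ with $\bw=(a_1,\ldots,a_4)$ be the associated orthogonal valuation on $W_0$. Writing $\fa := \Phi_X(v_0|_X)$ and $\fb := \Phi_X(\mu_E)$, I would verify the two hypotheses of Theorem \ref{approx}. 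The relation $\fb \succeq \fa$ is immediate: for a monomial one has $v_0(x^I) = \sum_i I_i a_i = \mu_E(\bar x^I)$, so the ultrametric inequality for $\mu_E$ gives $\mu_E(\bar g) \ge v_0(g)$ for every $g$, whence $\mu_E \succeq v_0|_X$. The equality $\fa_1 = \fb_1$ at level one is the delicate linear check: if $f_0$ denotes the weight-$0$ part of $f$, then $\mu_E(\bar f_0) = 0$ whenever $\bar f_0 \ne 0$, so $\mu_E(\bar f)\ge 1$ forces $\bar f_0 = 0$ in $\Oo_X$; then $f-f_0$ is a representative of weight $\ge 1$ and $v_0|_X(\bar f) \ge 1$.

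Next I would identify the graded ring. Here $\fb_n = \{\bar f : \mu_E(\bar f)\ge n\} = \phi_*\Oo_Y(-nE)$ and $\fb_1 = I_C$, so $\Oo_X/\fb_1 = \Oo_C$ and $R^* := \bigoplus_n \fb_n/\fb_{n+1}$ is exactly the ring $R_X$ of Proposition \ref{fg}; that proposition yields that $R^*$ is a finitely generated $\Oo_C$-algebra. Theorem \ref{approx} then applies and produces elements $s_1,\ldots,s_M \in \Oo_X$ and integers $b_1,\ldots,b_M$ such that $\mu_E$ is obtained from $v_0|_X$ by the $M$ successive tiltings by $(s_1,b_1),\ldots,(s_M,b_M)$.

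The remaining step, and the technical heart, is to realize each such tilting geometrically. I claim that tilting $v_0|_X$ by $(s,b)$ on $\Oo_X$ coincides with the restriction to $X$ of the orthogonal valuation on $W_0 \times \A^1$ of weight $(\bw,b)$, under the graph embedding $X \hookrightarrow W_0 \times \A^1$ given by $x_{\mathrm{new}} = \tilde s$, where $\tilde s \in \Oo_{W_0}$ is any lift of $s$. Indeed, evaluating $\sum_i c_i(x)\,x_{\mathrm{new}}^i$ on $X$ gives $\sum_i \bar c_i\,\bar s^{\,i}$, while its orthogonal value is $\min_i(v_0(c_i)+ib)$; optimizing over representatives $c_i$ and over the decomposition recovers precisely the formula $v^+(\bar f)=\sup\{\min_i(v_0|_X(\bar c_i)+ib) : \bar f = \sum_i \bar c_i\,\bar s^{\,i}\}$ of Lemma \ref{tilting}. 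Iterating this dictionary $M$ times enlarges $W_0$ to $W := W_0 \times \A^M = \A^{4+M}$, equips it with the orthogonal valuation $v$ of weight $(\bw,b_1,\ldots,b_M)$, and re-embeds $X \hookrightarrow W$ by the successive graphs; by construction $v_X = \mu_E$. Since $W_0$ is never altered, it remains the core, and $(W,v)$ represents $\mu_E$, which is the assertion. The main obstacle is this last dictionary: one must check that the abstract tilting on $\Oo_X$ of Theorem \ref{approx} is faithfully mirrored by genuine orthogonal valuations on honest affine spaces, i.e. that restricting first and then tilting agrees with tilting the ambient valuation and then restricting. Once this compatibility is in place, the finiteness of the number of tiltings, hence the finiteness of $\dim W$, is automatic from Theorem \ref{approx}.
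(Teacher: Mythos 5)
Your overall strategy is the paper's: verify the hypotheses of Theorem \ref{approx} for the pair $(\fa,\fb)$ with $\fb=\Phi_X(\mu_E)$, feed in the finite generation from Proposition \ref{fg}, and convert each tilting into one extra ambient coordinate. But your choice of the \emph{initial} orthogonal valuation — weights $a_i=\mu_E(\bar x_i)$ on the four coordinates of $W_0$ — creates a genuine gap at the step you call ``the delicate linear check.'' Your argument that $\fa_1=\fb_1$ rests on the claim that $\mu_E(\bar f_0)=0$ whenever the weight-$0$ part $\bar f_0$ is nonzero in $\Oo_X$. That is false in general: the weight-$0$ part lives in the subring generated by the coordinates not vanishing on the center $C$, and a nonzero element of that subring can perfectly well lie in $I_{C/X}$. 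For instance, if $C=\{x_2-x_1^2=x_3=x_4=0\}$ then $a_1=a_2=0$, the element $x_2-x_1^2$ is $v_0$-homogeneous of weight $0$, yet $\mu_E(\overline{x_2-x_1^2})\ge 1$; since $\mmul_o f=2$ one cannot absorb it by a multiple of the defining equation, so $\overline{x_2-x_1^2}\in\fb_1\setminus\fa_1$. In short, $\fa_1=\fb_1$ holds only when $I_{C/X}$ is generated by the images of the positive-weight coordinates, which you have not arranged and which fails for a general center. Without $\fa_1=\fb_1$, Theorem \ref{approx} does not apply (its tiltings can never enlarge $\fa_1$, since $\fa_1\subset\fa_1^+\subset\fb_1$ is forced to be an equality).

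The paper's proof avoids exactly this by a different initial valuation: it assigns weight $0$ to \emph{all} of $\Oo_{W_0}$ and adjoins new variables $y_1,\ldots,y_k$ mapping to a full generating set $s_1,\ldots,s_k$ of $I_{Z/W_0}$ ($Z$ the center of $E$), with $v(y_i)=\mu_E(\bar s_i)$. Then $\fa_1=(\bar s_1,\ldots,\bar s_k)=I_{Z/X}=\fb_1$ is automatic. This choice also repairs a second defect of your construction: the theorem asserts that the orthogonal valuation \emph{represents} $\mu_E$, and the definition of ``represents'' requires an affine subspace $W_0\supset X$ on which $v$ vanishes (the core). Since $X$ is a hypersurface, the smallest affine subspace containing it is $\A^4$ itself, so a valuation with some $a_i>0$ on those coordinates can never satisfy the core condition; this condition is not decorative — it is used later for the $G$-equivariant refinement. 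Your ``dictionary'' converting a tilting by $(s,b)$ into a graph embedding with one new weight-$b$ coordinate is sound and is also how the paper (implicitly) realizes the output of Theorem \ref{approx}; the fix for your argument is simply to start, as the paper does, with weight $0$ on $W_0$ and new variables for generators of the ideal of the center rather than reweighting the original coordinates.
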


\begin{proof}
We first embed $X$ into a $4$-dimensional affine space $W_0$ and let $\pi_1: \Oo_{W_0} \to \Oo_X$ be the surjection of coordinate rings. For any element $u \in \Oo_{W_0}$, we denote its image in $\Oo_X$ by $\bar{u}$.  On $\Oo_{W_0}$, we pick a generating set  $\{s_1,...,s_k\}$ of $I_{Z/W_0}$, where $Z$ is the center of $E$ on $X$.  We consider $R:=\Oo_W[y_1,...,y_k]$ and a valuation $v$ on $R$ by setting
\[v(y_i) = \left\{ \begin{array}{ll} \mu_E(\bar{s}_i), &  \text{ if }  \bar{s}_i \ne 0; \\ 1, &   \text{ if }  \bar{s}_i= 0
\end{array}
\right. \eqno{\dagger}
\]
and setting $v(c)=0$ for $c \ne 0 \in \Oo_{W_0}$, $v(\sum_I c_I y^I)= \min_I \{ v(y^I)| c_I \ne 0 \}$ in general.

We consider the surjective homomorphism $ \pi: R \stackrel{\pi_2}{\to} \Oo_{W_0} \stackrel{\pi_1}{\to} \Oo_X$ which send $y_i \mapsto s_i \mapsto  \bar{s}_i$.
Let $\fa^\sharp$ be the associated v-filtration on $R$ and $\fa$ be the image of $\fa^\sharp$ on $\Oo_X$. Let $\fb$ be the  v-filtration on $\Oo_X$ associated to $\mu_E$.

We need to verify that $\fb \succeq \fa$ and $\fb_1=\fa_1$.
First note that $\fa^\sharp_1=(y_1,...,y_k)$ and hence  $\fa_1=( \bar{s}_1,..., \bar{s}_k)$.
Notice also that $\mathfrak{k}:=\ker( \Oo_{W_0} \to \Oo_X) \subset I_{Z/W}$ and hence $\fb_1=I_{Z/X}=\pi_1( I_{Z/W_0})=\fa_1$.

Also, for any $\bar{u} \in \Oo_X$ such that $\bar{u}=\pi(y)$ for some $y \in R$. We write $y=\sum c_I y^I$ with $c_I \in \Oo_W$  and hence $\bar{u}=\sum \bar{c}_I \bar{s}^I$. One has
\[ \begin{array}{ll} \mu_E(\bar{u})   &\ge \min \{ \mu_E (\bar{c}_I \bar{s}^I)| \bar{c}_I \bar{s}^I \ne 0 \} \ge \min \{ \mu_E (\bar{s}^I))| \bar{c}_I \bar{s}^I \ne 0 \} \\
&=\min \{ v  (y^I)| \bar{c}_I \bar{s}^I \ne 0 \} \ge \min\{ v(y^I)| c_I \ne 0\} =v(y).
\end{array}\]
Therefore,
$\mu_E(\bar{u}) \ge \sup\{ v(y) | \pi(y)=\bar{u}\}=v|_X(\bar{u})$. That is, $\fb \succeq \fa$.

By Proposition \ref{fg}, $R_X$ is finite generated $\Oo_X$-algebra. Together with Theorem \ref{approx}, we are done.
\end{proof}

\subsection{group action on valuations}
Let $G$ be a finite abelian group acting on $X$ (hence acting on $\Oo_X$. We are interested in valuations and filtrations compatible with the group action. In general, let $R$ be an integral domain over $k$ with a $G$-action. We have $R= \oplus_{\chi \in \hat{G}} R_\chi$ and $\fa_n= \oplus_{\chi \in \hat{G}} \fa_{n,\chi}$ for all $n$, where $\hat{G}$ denotes the character group of $G$. It follows that  $\fa_{n,\chi_1} \cdot \fa_{m, \chi_2} \subset \fa_{n+m, \chi_1 \chi_2}$ for all $n,m \ge 0$ and all  $\chi_1, \chi_2 \in \hat{G}$.

\begin{defn}
Suppose that an abelian group acts on $R$. A p-filtration is said to be  compatible with $G$, or to be a $G$-p-filtration, if $\sigma (\fa_n) = \fa_n$ for any $\sigma \in G$ and any $n$.
\end{defn}

We will need the notion of $G$-tilting.
Suppose that $s \in R_{\chi_0}$ for some $\chi_0 \in \hat{G}$. We define $R[x]_\chi:=\sum_{i \ge 0} R_{(\chi \chi_0^{-i})} x^i$. Then $R[x]=\oplus_{\chi \in \hat{G}} R[x]_\chi$ is a $G$-graded ring, also $R[x] \to R$ sending $x \to s$ is a $G$-graded morphism.

Given a p-valuation on $R$, we say that it is a $G$-p-valuation if its associated filtration is a $G$-p-filtration.
Suppose that there are $G$-p-filtration $\fb_\chi \succeq \fa_\chi $ and $\fb_{1,\chi}=\fa_{1,\chi}$ for all $\chi$. We can also approximate $\fb$ by $\fa$  in a similar manner. The first difference is that we need to approximate by  $G$-tilting each time. 

Actually, it is possible to work out a more refined version of approximation of the $G$-p-valuations so that each coordinate is $G$-semi-invariants and each tilting is $G$-compatible. The rest of this section is devoted to this refined approximation. 

Let $W$ be an affine space with coordinated $\bx=(x_1,\ldots, x_n)$ with weights $\bw=(a_1,\ldots, a_n)$.
Let $v_\bw$ be the associated orthogonal valuation. 
We will use the  notation ${\bx}(\hat{x}_i):=(x_1,\ldots, \hat{x}_i, \ldots,  x_{n})$ and ${\bw}(\hat{a}_i):=(a_1,\ldots, \hat{a}_i, \ldots, a_{n})$.

\begin{defn}

An element  $u\in\Oo_W$ is said to be a {\it twister of $v_\bw$} if  $\bx':=({\bx}(\hat{x}_i), u)$ forms a coordinate system of $W$ for some $i$,  and together with weights  $\bw':=({\bw}(\hat{a}_i),b)$ for some $b \ge 0$, then one has $v_{\bw}=v_{\bw'}$.

Let $\bar{W}$ to be the affine subspace with coordinates ${\bx}(\hat{x}_i)$ and 
$ \bar{v}:=v_{{\bw}(\hat{a}_i)}$. The we call $(\bar{W}, \bar{v})$  to be the {\it complement of $u$} (with respects to $(W,v_\bw)$).

An orthogonal valuation $(W,v_\bw)$ which represents $\mu$ is said 
to be minimal if there is no complement represents $\mu$. 
\end{defn}

In particular, for any twister $u$ of $v_\bw$ such that $X$ is contained in the complement $\bar{W}$, one has that $\bar{v}_X(u|_X)<\mu(u|_X)$.

Let $(W,v)$ be an orthogonal valuation which represents a pre-valuation $\mu$ on a variety $X$. Assume that $u$ is a twister of $v$ such that $v(u)>0$, then the complement of $u$ contains $X$.

We may write ${\bx}(\hat{u})$ instead of  ${\bx}(\hat{x}_i)$ to emphasize that it is the complement of $u$.

\begin{defn}
    Let $\mu$ be a pre-valuation on a variety $X$. Let $\mathfrak{a}$ be its associated filtration. The center of $\mu$ on $X$, denoted $C_X(\mu)$, is the subvariety defined by the ideal $\mathfrak{a}_1$. 
\end{defn}

\begin{lem}\label{vy}
	Let $\mu$ be a pre-valuation on a variety $X$ such that every component of $C_X (\mu)$ passes through the origin  $o\in X \subset W$. 
 
 Suppose  that $(W,v)$ is a minimal orthogonal valuation representing $\mu$ with core $W_0$.  Let $u$ be a twister of $v$ not contained in the core $W_0$. Then $v(u)=\mu(u|_X)>0$. 
 
 Furthermore, let $u' \in\Oo_W$ such that $u'|_X=u|_X$ and $v(u')=v(u)$. Then, $u'=u+\eta$ for some  $\eta \in u \cdot \mathfrak{m} + k[[ {\bx}(\hat{u}) ]]$, where $\mathfrak{m}$ is the maximal ideal of the origin in $W$.
\end{lem}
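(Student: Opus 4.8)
The plan is to work throughout in the coordinate system supplied by the twister. Since $u$ is a twister of $v$, we may take $\bx'=(\bx(\hat{u}),u)$ as coordinates on $W$ with weights $\bw'=(\bw(\hat{a}_i),b)$ and $v=v_{\bw'}$, so that by definition $v(u)=b$; every element of (the completed) $\Oo_W$ then has a unique expansion $\sum_{j\ge0}c_j u^j$ with $c_j\in k[[\bx(\hat{u})]]$, and $\bar v:=v_{\bw(\hat{a}_i)}$ reads off the $u$-free part. I will freely use Lemma \ref{restrict}, that $v_X(f)=v(f)$ precisely when $f_v\notin I_v$ (with $I:=I_{X/W}$), and the two facts recorded after the definition of complement: a twister with $v(u)>0$ has complement containing $X$, while whenever $X\subset\bar W$ one has $\bar v_X(u|_X)<\mu(u|_X)$. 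The common engine of the argument is a \emph{reduction-against-minimality} principle: if the leading form $u$ can be matched, modulo $I_v$, by a homogeneous form not involving $u$, then $u$ is removable and the complement $(\bar W,\bar v)$ represents $\mu$, contradicting minimality of $(W,v)$.

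For the first assertion, note $\mu(u|_X)=v_X(u|_X)\ge v(u)=b$ always, by Lemma \ref{restriction}. I first show $b>0$: if $b=0$ the $u$-coordinate has weight zero, and since $u\notin\Oo_{W_0}$ one checks that dropping it yields a complement $(\bar W,\bar v)$ still containing $X$ (the hypothesis that every component of $C_X(\mu)$ passes through $o$ keeps the core at the origin) with $\bar v_X=\mu$, contradicting minimality; hence $b>0$ and, by the recorded fact, $X\subset\bar W$. The equality $v(u)=\mu(u|_X)$ is equivalent, via Lemma \ref{restrict}, to $u=u_v\notin I_v$. If instead $u\in I_v$, one produces $g\in k[[\bx(\hat{u})]]$ matching the leading behaviour of $u$, and an order-by-order approximation removes $u$ from the embedding, again realizing the complement as a representative of $\mu$, the desired contradiction. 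Thus $u\notin I_v$ and $v(u)=\mu(u|_X)>0$.

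For the \emph{Furthermore}, expand $u'=\sum_{j\ge0}c_j u^j$ and write
\[
 u'-u = c_0+(c_1-1)u+\sum_{j\ge2}c_j u^j.
\]
Here $c_0\in k[[\bx(\hat{u})]]$, and since $b>0$ forces $u\in\mathfrak{m}$, the tail $\sum_{j\ge2}c_j u^j=u\cdot\big(\sum_{j\ge2}c_j u^{j-1}\big)$ lies in $u\cdot\mathfrak{m}$ automatically; so the claim reduces to $c_1(o)=1$. To pin this down, from $v(u')=b$ I read off $\bar v(c_0)\ge b$ and $\bar v(c_1)\ge0$, whence the weight-$b$ leading form is $u'_v=(c_0)_v+(c_1)_{\bar v=0}\,u$ (no $j\ge2$ contributes, as $b>0$); and from $u'|_X=u|_X$ I obtain $c_0|_X+(c_1|_X-1)\bar u+\sum_{j\ge2}c_j|_X\,\bar u^{\,j}=0$ in $\Oo_X$, where $\bar u:=u|_X$ has $\mu(\bar u)=b$ by the first part. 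Comparing $\mu$-leading forms in degree $b$, using that the image of $u$ in the associated graded is nonzero (this is $u\notin I_v$), gives $u'_v-u\in I_v$, i.e.
\[
 (c_0)_v+\big((c_1)_{\bar v=0}-1\big)u\in I_v.
\]
If $c_1(o)=\lambda\ne1$ then $(c_1)_{\bar v=0}-1$ is a unit at $o$, so this expresses $u$ modulo $I_v$ through the $u$-free form $(c_0)_v$; the reduction-against-minimality principle then eliminates $u$ and exhibits $(\bar W,\bar v)$ as a representative of $\mu$, contradicting minimality. Hence $\lambda=1$, and $\eta=u'-u\in u\cdot\mathfrak{m}+k[[\bx(\hat{u})]]$ as required.

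The step I expect to be the main obstacle is the reduction-against-minimality principle itself: upgrading an associated-graded statement (``$u$ lies in $I_v$ modulo $u$-free forms'') into the honest claim that the complement $(\bar W,\bar v)$ represents $\mu$. This demands an order-by-order approximation to actually delete the variable $u$ from the embedding, analogous to the lifting and convergence argument used for $v$-bases, while checking the core and center conditions (where the hypothesis on $C_X(\mu)$ enters) so that the resulting complement genuinely violates minimality. Both halves of the first assertion and the crux $c_1(o)=1$ all funnel through this single mechanism.
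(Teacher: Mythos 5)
There is a genuine gap, and you have named it yourself: every step of your argument (the positivity $v(u)>0$, the equality $v(u)=\mu(u|_X)$ via $u\notin I_v$, and the crux $c_1(o)=1$) is funneled through the ``reduction-against-minimality principle'' --- that a congruence $u\equiv g \pmod{\langle I_v\rangle}$ with $g$ free of $u$ forces the complement $(\bar W,\bar v)$ to represent $\mu$ --- and that principle is never proved. This is not a routine verification: $u-g\in I_v$ only says that $u|_X$ agrees with a $u$-free function \emph{to one higher order} of the valuation, and promoting this to $\bar v_X(u|_X)\geq\mu(u|_X)$ (equivalently, to the complement representing $\mu$) requires the order-by-order elimination and a convergence argument in the completed ring, plus the verification of the core and center conditions. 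Since the entire content of the lemma sits inside that unproved step, the proposal as written does not constitute a proof.

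The paper's own proof shows the detour through leading forms and $I_v$ is unnecessary. It uses the same decomposition as yours ($u'=\lambda u+\xi$ with $\xi\in k[[\bx(\hat u)]]$, so $\xi=c_0$ and $\lambda(o)=c_1(o)$), but if $\lambda-1$ were a unit it derives a contradiction directly at the level of the full valuations: $v(u)=v((1-\lambda)u)=v(\xi)=\bar v(\xi)\leq\bar v_X(u|_X)<\mu(u|_X)=v(u)$, where the strict inequality $\bar v_X(u|_X)<\mu(u|_X)$ is exactly the fact recorded after the definition of minimality. You quote this recorded fact at the outset of your proposal but then never invoke it where it would close the argument; had you compared $v$ and $\bar v$ on $\xi=c_0$ directly instead of passing to the associated graded, the contradiction would be immediate and no elimination procedure would be needed. (Separately, your treatment of $b>0$ is also too quick: when $b=0$ the recorded fact guaranteeing $X\subset\bar W$ does not apply, so ``one checks that dropping it yields a complement still containing $X$'' needs an actual argument.)
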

\begin{proof} Given $u'$, we consider its expansion in $u$. So we may write $u'=\lambda u +\xi $ for some $\lambda \in k[[{\bx}(\hat{u}), u]]$ and $\xi \in k[[{\bx}(\hat{u})]]$. 

 We first claim that $\lambda-1 \in \mathfrak{m}$. Assume on the contrary that the function $\lambda-1 \not \in \mathfrak{m}$. Then $v(u)= v((\lambda-1)u)$ and $\bar{v}(u)= \bar{v}((\lambda-1)u)$ as both $v$ and $\bar{v}$ are orthogonal valuations.  
Then
	\[v(u)= v((1-\lambda)u)=v(\xi)=\bar{v}(\xi) = \bar{v}(u) \leq \bar{v}_X(u|_X)<\mu(u|_X)=v(u),\]
which is a contradiction. 

Hence we can write $u'=u+(\lambda-1)u + \xi$ with the desired property. 
%
\end{proof}
\begin{rk}
	Keep the assumption of Lemma \ref{vy}. Then for all $u\in\Oo_W$ one has that $v(u)=0$ if and only if $u\in W_0$.
\end{rk}

\begin{defn}
	Let $X$ be a variety and let $G$ be a finite group which acts on $X$. Let $v$ be a pre-valuation on $X$. We say that $v$ is compatible with $G$ if $v(u)=v(\sigma(u))$ for all $u\in\Oo_X$ and for all $\sigma\in G$.
\end{defn}

\begin{lem}\label{gcv}
	Notation as above. Assume that $v$ is compatible with $G$. Then for all $u\in\Oo_X$ one has that $v(u)=\min\se{v(u_{\chi})}{\chi\in\hat{G},u_{\chi}\neq0}$.
\end{lem}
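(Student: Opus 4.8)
The plan is to establish the identity as two opposite inequalities, after writing the isotypic decomposition $u=\sum_{\chi\in\hat G}u_\chi$, where $u_\chi\in\Oo_X$ is the $\chi$-semi-invariant component (this lies in $\Oo_X$ because $G$ is finite abelian and we work over $\bC$, so the projection below is a $\bC$-linear combination of elements $\sigma(u)\in\Oo_X$). One inequality is immediate from the pre-valuation axioms: iterating axiom (2),
\[ v(u)=v\Big(\sum_{\chi}u_\chi\Big)\ge \min\se{v(u_\chi)}{\chi\in\hat G,\ u_\chi\neq 0}. \]
So the whole content is the reverse inequality, namely that $v(u_\chi)\ge v(u)$ for each individual $\chi$; taking the minimum over $\chi$ then forces equality.

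For the reverse inequality the key device is the character-averaging projection
\[ u_\chi=\frac{1}{|G|}\sum_{\sigma\in G}\chi(\sigma)^{-1}\,\sigma(u). \]
First I would record the elementary fact that a pre-valuation is insensitive to multiplication by a nonzero scalar $c\in\bC^\ast$: from $v(cw)\ge v(c)+v(w)$ and $v(w)=v(c^{-1}(cw))\ge v(c^{-1})+v(cw)$, together with $v(1)=0$, one gets $v(cw)=v(w)$. Since each $\chi(\sigma)^{-1}$ is a root of unity, hence a nonzero scalar, and since $v$ is compatible with $G$ (so $v(\sigma(u))=v(u)$), every summand satisfies $v(\chi(\sigma)^{-1}\sigma(u))=v(\sigma(u))=v(u)$. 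Applying axiom (2) once more to the averaged sum, and using the scalar $1/|G|$ again, yields $v(u_\chi)\ge v(u)$.

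Combining the two inequalities gives $v(u)=\min\se{v(u_\chi)}{\chi\in\hat G,\ u_\chi\neq0}$, as claimed. I do not expect a genuine computational obstacle; the only place where real care is needed is the crux step of the second paragraph, where both hypotheses — the $G$-compatibility of $v$ and the triviality of $v$ on the scalar factors $\chi(\sigma)^{-1}$ — must be invoked simultaneously to conclude that averaging does not drop the value. This is precisely the step that would fail for a general, non-$G$-compatible pre-valuation, which is why the compatibility assumption is essential.
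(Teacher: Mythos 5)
Your proof is correct, but it takes a genuinely different route from the paper. You use the character-averaging projector
\[ u_\chi=\frac{1}{|G|}\sum_{\sigma\in G}\chi(\sigma)^{-1}\,\sigma(u), \]
together with the observations that a pre-valuation is unchanged by nonzero scalars (since $v\ge 0$ on $\Oo_X$, the two product inequalities squeeze) and that $G$-compatibility gives $v(\sigma(u))=v(u)$; one application of the subadditivity axiom then yields $v(u_\chi)\ge v(u)$ for every $\chi$, and the reverse inequality $v(u)\ge\min_\chi v(u_\chi)$ is immediate. The paper instead argues by induction on the number of nonzero isotypic components $|S(u)|$: it picks characters $\chi_1,\chi_2$ realizing the maximal and minimal values, chooses a single $\sigma$ with $\chi_1(\sigma)\neq\chi_2(\sigma)$, and passes to $u'=\sigma(u)-\chi_1(\sigma)u$, which has strictly fewer components. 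Your argument is shorter and isolates the one place where both hypotheses (compatibility and scalar-invariance) enter; its only cost is the division by $|G|$, which is harmless over $\bC$ but would need care in positive characteristic dividing $|G|$. The paper's inductive argument avoids the averaging operator entirely, at the price of a slightly more delicate bookkeeping of which component realizes the minimum after the subtraction. Both are complete proofs of the stated lemma.
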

\begin{proof}
	Fix an element $u\in\Oo_X$. We set  $S(u) := \se{\chi\in\hat{G}}{u_{\chi}\neq 0}$. We will prove by induction on $|S(u)|$. 
 
 If $|S(u)|=1$, then $u=u_{\chi}$ for some $\chi\in\hat{G}$ and there is nothing to prove. In general, we fix $\chi_1 \ne  \chi_2 \in S(u)$ such that $v(u_{\chi_1})\geq v(u_{\chi'}) \ge v(u_{\chi_2})$ for all $\chi' \in S(u)$. Then we pick  $\sigma\in G$  such that $\chi_1(\sigma)\neq\chi_2(\sigma)$. Then, 
\[ \begin{array}{ll} \sigma(u) &=\chi_1(\sigma)u_{\chi_1}+\chi_2(\sigma)u_{\chi_2} +\sum_{\chi'\neq \chi_1, \chi_2 } \chi'(\sigma)u_{\chi'} \\

\chi_1(\sigma) \cdot u &=\chi_1(\sigma)u_{\chi_1}+\chi_1(\sigma)u_{\chi_2} +\sum_{\chi'\neq \chi_1, \chi_2 } \chi_1(\sigma)u_{\chi'} .
 \end{array}\]
Let \[u'=\sigma(u)-\chi_1(\sigma)\cdot u  \] Then $S(u')\leq S(u)-1$. By the induction hypothesis, one has that
	\[v(u')= v( (\chi_1(\sigma)-\chi_2(\sigma) )\cdot u_{\chi_2}) = v(u_{\chi_2}) .\]
Notice also that $v(\sigma(u)) = v(u) = v(\chi_1 (\sigma) \cdot u)$ and hence $v(u') \ge v(u)$. Therefore, 
\[ v(u_{\chi_2}) = v(u') \ge v(u)  \ge \min\se{v(u_{\chi})}{\chi \in S(u)} = v(u_{\chi_2}). \] This completes the proof.
\end{proof}

\begin{lem}\label{gcp}
Let $X$ be an affine variety contained in an affine space $W$. Suppose that there is a pre-valuation $\mu$ on $X$ such that every component of $C_X(\mu)$ passes through the origin $o\in X \subset W$ and compatible with a finite group $G$ action. 

If $\mu$ is represented by an orthogonal valuation $v$ on $W$, then there exists a $G$-semi-invariant coordinate system of $W$ so that $v=v_\bw$ for some weight $\bw$.

 
\end{lem}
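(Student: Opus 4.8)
The plan is to move to the associated graded ring of the $v$-filtration, where the $G$-action becomes graded, choose $G$-semi-invariant homogeneous generators there, and lift them back to a semi-invariant coordinate system realizing $v$. We use throughout that $G$ acts on $W$ with $v$ compatible, so that every ideal $\fa_{v,n}$ is $G$-stable; consequently $G$ acts on $S:=\mathrm{gr}_v\Oo_W=\bigoplus_n \fa_{v,n}/\fa_{v,n+1}$ by graded automorphisms. Because $v=v_\bw$ is orthogonal, $S$ is naturally a weighted polynomial ring: writing $\Oo_W=k[x_1,\dots,x_n]$ with the monomial grading $\deg x_i=a_i$, the leading-form map identifies $S$ with $k[\xi_1,\dots,\xi_n]$, where $\xi_i=\mathrm{in}_v(x_i)$ has degree $a_i$ (the weight-zero variables spanning the core $W_0$). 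I would first record that the maximal ideal $\mathfrak m$ of the origin $o$ in $S$ is homogeneous, so that the cotangent space $\mathfrak m/\mathfrak m^2$ is an $n$-dimensional graded $k$-vector space on which $G$ acts preserving the grading; note that using $\mathfrak m$ rather than the irrelevant ideal handles the weight-zero (core) and positive-weight directions uniformly.

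Next I would produce semi-invariant generators. As $G$ is finite abelian and we work over $\mathbb C$, it is reductive and its finite-dimensional representations split into characters; applying this to each graded piece of $\mathfrak m/\mathfrak m^2$ yields a homogeneous basis $\bar\eta_1,\dots,\bar\eta_n$ consisting of $G$-semi-invariants, with each $\deg\bar\eta_i$ equal to some $a_j$. By reductivity I lift each $\bar\eta_i$ to a homogeneous $G$-semi-invariant $\eta_i\in S$, and since $o$ is a smooth point these $\eta_i$ form a homogeneous regular system of parameters, hence a semi-invariant coordinate system of $S$.

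Finally I would lift the $\eta_i$ to $\Oo_W$: choosing any representative of $\eta_i$ in $\fa_{v,d_i}$ (with $d_i=\deg\eta_i$) and projecting it onto its $\chi_i$-isotypic component produces a $G$-semi-invariant $y_i\in\Oo_W$ with $v(y_i)=d_i$ and leading form $\eta_i$; here Lemma \ref{gcv} is exactly what guarantees that the character projection does not decrease $v$, so it preserves $\fa_{v,d_i}$ and descends to the identity on $\eta_i$ in the associated graded. It then remains to check that $\{y_1,\dots,y_n\}$ is a coordinate system of $W$ and that $v$ equals the monomial valuation $v_{\bw'}$ with $\bw'=(d_1,\dots,d_n)$. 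This is the standard principle that, once the leading forms $\mathrm{in}_v(y_i)=\eta_i$ generate $\mathrm{gr}_v\Oo_W$, the $y_i$ generate $\Oo_W$ and the $v$-order of any series in the $y_i$ equals the weighted order of its leading form; I would run this through the same inductive leading-term replacement used to prove that a $v$-basis defines the proper transform.

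The main obstacle is precisely that the weight grading coming from the original orthogonal coordinates need not be $G$-stable, so one cannot simply replace each $x_i$ by one of its character components and retain orthogonality. Passing to $\mathrm{gr}_v\Oo_W$, where $G$ necessarily acts by a graded automorphism, is the device that circumvents this; reductivity then supplies the semi-invariant homogeneous generators, and Lemma \ref{gcv} controls the lift so that monomiality of $v$ is preserved. The one technical point requiring care is the last verification, that leading-form generation of $\mathrm{gr}_v\Oo_W$ upgrades to the $y_i$ being genuine coordinates that realize $v$ as $v_{\bw'}$.
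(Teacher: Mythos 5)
Your argument rests on the assumption, stated at the outset, that $G$ acts on $W$ compatibly with $v$, so that each $\fa_{v,n}$ is $G$-stable and $G$ acts on $\mathrm{gr}_v\Oo_W$ by graded automorphisms. That is not part of the hypothesis: in this lemma $G$ acts only on $X$ (compatibility is a condition on the pre-valuation $\mu$ on $\Oo_X$, per the definition preceding Lemma \ref{gcv}), and producing an action of $G$ on $W$ --- equivalently, a coordinate system whose members can consistently be declared semi-invariant --- is precisely what has to be proved. Every step of your proof that invokes isotypic components in $\Oo_W$ or in $\mathrm{gr}_v\Oo_W$ (the homogeneous semi-invariant basis of $\mathfrak m/\mathfrak m^2$, the reductivity lifting, the ``character projection'' of a representative of $\eta_i$) presupposes this action and is therefore circular. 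Note also that Lemma \ref{gcv} is a statement about a $G$-compatible pre-valuation on $\Oo_X$; it cannot control a projection carried out in $\Oo_W$, where no character decomposition is available.

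The paper's proof confronts exactly the point you bypass. It fixes a coordinate system $(\bx,\by)$ with $\bx$ on the core $W_0$ (where all functions have $v$-value $0$, so arranging semi-invariance of the restrictions is harmless) and, for each remaining coordinate $y_i$, decomposes the restriction $s=y_i|_X$ into character components $s_\chi$ inside $\Oo_X$, where the $G$-action does exist. Using that $v$ represents $\mu$, each $s_\chi$ is lifted to some $u_\chi\in\Oo_W$ with $v(u_\chi)=\mu(s_\chi)\ge\mu(s)=v(y_i)$; Lemma \ref{gcv} forces equality for some $\chi_0$, and Lemma \ref{vy}, applied to $u'=\sum_\chi u_\chi$, shows that $u_{\chi_0}$ differs from $y_i$ by a unit multiple plus terms from the complement, hence can replace $y_i$ as a coordinate with the same $v$-value. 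Only after this replacement is the $G$-action on the new coordinate \emph{defined}, by $\sigma(y_i')=\chi_0(\sigma)y_i'$. If you wish to keep your associated-graded picture, you would still need to run this lifting argument first in order to have an action on $W$ at all, and at that point the graded formalism adds nothing.
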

\begin{proof}
	Let $W_0$ be the core. We may choose a coordinate system $\bx=(x_1, \ldots , x_n)$ on $W_0$ so that  $x_i|_X$ is a $G$-semi-invariant for all $i$. Let $\by=(y_1, \ldots, y_m)$ be linear functions on $W$ such that $(\bx, \by)$ forms a coordinate system of $W$. 
    
    We assume that  $y_i$ are twisters of $v$ for all $i$. Let $(\bar{W}_i,\bar{v}_i)$ be the complement of $y_i$. 
    Fix $1\leq i\leq m$. We set  $s=y_i|_X$ and we can write $s=\sum_{\chi\in\hat{G}}s_{\chi}$.

    For any $\chi$, we pick once and for all that $u_{\chi}\in\Oo_W$ satisfying $u_{\chi}|_{X}=s_{\chi}$ and $v(u_{\chi})=\mu(s_{\chi})$. We know that \[v(u_{\chi})=\mu(s_{\chi})\geq \mu(s)=v(y_i), \]
     and the equality holds for some $\chi$ by  Lemma \ref{gcv}. 

     Let $u':=\sum_{\chi\in\hat{G}}u_{\chi}$. Then $u'|_X=y_i|_X$ and $v(u')=v(y_i)$. 
      By Lemma \ref{vy}, we know that $u'=y_i+\xi$.  Hence there exists $\chi_0$ such that $u_{\chi_0}=c y_i+\xi_0$ for some unit $c$ and $v(u_{\chi_0})=v(y_i)$. 
      
      Now we define $y'_i=u_{\chi_0}$ and define the $G$-action on $y'_i$ by $\sigma(y'_i)=\chi_0(\sigma)y'_i$.
      The coordinate system $(\bx, \by')$ is a $G$-semi-invariant coordinate system of $W$ so that $v$ is an orthogonal valuation given by $v(x_i)=0$ and $v(y'_j)=v(y_j)$. 
\end{proof}

In this situation, we say that $(\bx, \by')$ {\it generates} $v$.

\begin{lem}\label{vhom}
	Assume that $(W,v)$ is an orthogonal valuation and $G$ is a finite group acting on $W$ such that coordinates are $G$-semi-invariants. Then for any $G$-semi-invariant $u$, one has that every $v$-homogeneous part of $u$  is a $G$-semi-invariant and corresponds to the same character of $u$.
\end{lem}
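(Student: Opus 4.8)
The plan is to exploit the fact that the $v$-weight and the $G$-character define two \emph{compatible} gradings on $\Oo_W$, simply because the chosen coordinates are simultaneously homogeneous for both. Since each coordinate $x_j$ is a $G$-semi-invariant, we may write $\sigma(x_j)=\chi_j(\sigma)\,x_j$ for a character $\chi_j\in\hat G$. Consequently every monomial $x^I=\prod_j x_j^{i_j}$ is itself a $G$-semi-invariant, with character $\chi_I:=\prod_j \chi_j^{i_j}$, while at the same time it is $v$-homogeneous of weight $v(x^I)=\sum_j a_j i_j$. Thus the monomials form a common eigenbasis witnessing both the $\hat G$-grading and the $v$-weight grading, and the two decompositions of an element commute.

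Next I would reduce the semi-invariance of $u$ to a monomial-by-monomial statement. Writing $u=\sum_I c_I x^I$, for each $\sigma\in G$ one has
\[
\sigma(u)=\sum_I c_I\,\chi_I(\sigma)\,x^I,
\qquad
\chi(\sigma)\,u=\sum_I c_I\,\chi(\sigma)\,x^I,
\]
where $\chi$ is the character of $u$. Since the monomials $x^I$ are linearly independent in $\Oo_W$, comparing coefficients yields $c_I\bigl(\chi_I(\sigma)-\chi(\sigma)\bigr)=0$ for every $I$ and every $\sigma$. Hence every monomial actually occurring in $u$ (that is, with $c_I\neq0$) satisfies $\chi_I=\chi$.

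The conclusion is then immediate. The $v$-homogeneous part $u_{v=n}$ is the subsum $\sum_{v(x^I)=n} c_I x^I$, and by the previous step each of its surviving monomials carries the single character $\chi$. Being a sum of $G$-semi-invariants all of character $\chi$, the element $u_{v=n}$ is itself $G$-semi-invariant of character $\chi$, which is exactly the assertion.

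I do not expect a genuine obstacle here: the argument is purely formal, the only substantive point being the compatibility of the two gradings, which holds precisely because the coordinates are simultaneous eigenvectors for the $G$-action and for $v$. I would also remark that, since the whole computation is carried out coefficient-by-coefficient on monomials, it applies verbatim whether $\Oo_W$ denotes the polynomial ring, the formal power series ring, or the ring of convergent power series, in accordance with Convention \ref{cov}.
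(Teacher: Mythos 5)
Your proof is correct and follows essentially the same route as the paper's: both arguments rest on the single observation that, because the coordinates are simultaneous eigenvectors for $G$ and for $v$, the $G$-action commutes with the decomposition into $v$-homogeneous parts. You carry this out monomial-by-monomial while the paper works directly with the homogeneous parts $u_m$ and the identity $\sigma(u)_m=\sigma(u_m)$, but this is only a difference in granularity, not in substance.
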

\begin{proof}
Since $u$ is $G$-semi-invariant, there  exists $\chi\in\hat{G}$ such that $\sigma(u)=\chi(\sigma)u$ for all $\sigma\in G$. Therefore, if we consider the decomposition into homogeneous part $u=\sum u_m$, then clearly, one has that $\chi(\sigma)u = \sum \chi(\sigma)u_m$  is a decomposition for $\chi(\sigma)u=\sigma(u)$.

On the other hand, $\sigma(u)_m=\sigma(u_m)$. Hence
$\sigma(u_m)= \chi(\sigma)u_m$ and we are done. 
\end{proof}

\begin{pro}\label{gact}
	Let $\mu$ be a pre-valuation on a variety $X$ such that every component of $C_X(\mu)$ passes through the origin $o\in X \subset W$. Assume that $(W,v)$ is a minimal orthogonal valuation representing $\mu$. Then, there exists a sequence of embeddings
	\[X\hookrightarrow  W_0 \hookrightarrow \cdots \hookrightarrow W_{m}\]
 such that 
 \begin{itemize}
     \item $W_0$ is the core of $X$ with coordinate $\bx$;
     \item $W_{i-1} \hookrightarrow W_i$ is defined by $y_i-s_i$ for some $s_i \in k[[\bx, y_1,\ldots, y_{m-1}]]$ for all $1 \le i \le m$;
     \item $(\bx, \by)$ generates $v$.
 \end{itemize}
	Furthermore, assume that there exists a finite group $G$ which acts on $X$ so that $\mu$ is $G$-compatible. Then $G$ can be extended to an action on $W$, and all functions appear above are $G$-semi-invariants.
\end{pro}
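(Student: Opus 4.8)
The plan is to build the tower of embeddings $X \hookrightarrow W_0 \hookrightarrow \cdots \hookrightarrow W_m$ one step at a time, extracting coordinates of $W$ in a disciplined order and checking at each stage that the freshly chosen coordinate is genuinely new (not already in the core) and compatible with $G$. I would start from the minimal orthogonal valuation $(W,v)$ representing $\mu$ and fix its core $W_0$, which by hypothesis is the smallest affine subspace on which $v$ vanishes identically. Choose a coordinate system $\bx$ on $W_0$. By the Remark following Lemma~\ref{vy}, for $u \in \Oo_W$ we have $v(u)=0$ precisely when $u \in W_0$, so any linear coordinate complementary to $\bx$ is a twister with $v(y_i)>0$. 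Extend $\bx$ to linear coordinates $(\bx, y_1, \ldots, y_m)$ of the whole of $W$; each $y_i$ is then a twister with positive value.

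The core of the argument is to replace each linear $y_i$ by a better representative $s_i$ so that the intermediate embedding is cut out by $y_i - s_i$. The idea is to process the $y_i$ in increasing order of $v$-value (so that the smallest values are pinned down first) and, at step $i$, to use Lemma~\ref{vy} to write any competitor $u'$ with $u'|_X = y_i|_X$ and $v(u') = v(y_i)$ in the form $u' = y_i + \eta$, with $\eta \in y_i \cdot \mathfrak{m} + k[[\bx, y_1, \ldots, y_{i-1}]]$. This is exactly what lets me define $s_i \in k[[\bx, y_1, \ldots, y_{i-1}]]$ and realize $W_{i-1} \hookrightarrow W_i$ via $y_i - s_i$: the normal-form statement guarantees $s_i$ lives in the previously constructed coordinates and does not involve $y_j$ for $j \ge i$. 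Running this inductively produces the desired chain, and by construction $(\bx, \by)$ generates $v$ because each $y_i$ remains a twister with $v(y_i) = \mu(y_i|_X) > 0$ while $v$ vanishes on the $\bx$.

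For the equivariant refinement I would invoke Lemma~\ref{gcp}: since $\mu$ is $G$-compatible and represented by $v$, there is a $G$-semi-invariant coordinate system in which $v = v_\bw$. I would run the construction above inside that coordinate system, so that the $\bx$ on the core are already $G$-semi-invariants. At each step, rather than taking an arbitrary representative of $y_i|_X$, I decompose $s = y_i|_X = \sum_{\chi} s_\chi$ into its isotypic pieces, lift each $s_\chi$ to some $u_\chi$ with $v(u_\chi) = \mu(s_\chi)$, and appeal to Lemma~\ref{gcv} to see that $v(s) = \min_\chi v(s_\chi)$, so the minimizing character $\chi_0$ achieves $v(u_{\chi_0}) = v(y_i)$. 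Lemma~\ref{vy} applied to $u' = \sum_\chi u_\chi$ then forces $u_{\chi_0} = c\,y_i + \xi_0$ with $c$ a unit, and I set $y_i' = u_{\chi_0}$ with $G$-action $\sigma(y_i') = \chi_0(\sigma) y_i'$. Lemma~\ref{vhom} ensures the $v$-homogeneous parts are themselves semi-invariants of the same character, keeping the whole process $G$-compatible; this is precisely the mechanism already executed in the proof of Lemma~\ref{gcp}.

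The main obstacle I anticipate is the bookkeeping in the inductive step, specifically verifying that the normal form $\eta \in y_i \cdot \mathfrak{m} + k[[\bx, y_1, \ldots, y_{i-1}]]$ genuinely excludes the not-yet-processed variables $y_i, \ldots, y_m$, so that $s_i$ defining $W_{i-1} \hookrightarrow W_i$ depends only on earlier coordinates. This is what makes the tower well-defined as a sequence of hypersurface embeddings rather than a simultaneous system, and it relies on having ordered the $y_i$ by $v$-value and on the minimality of $(W,v)$ (through the strict inequality $\bar{v}_X(u|_X) < \mu(u|_X)$ for twisters whose complement contains $X$). The equivariant part adds a second layer of care: one must confirm that choosing the semi-invariant representative $y_i'$ does not spoil the property that $(\bx, y_1', \ldots, y_{i-1}')$ still forms a valid coordinate system, which follows because $c$ is a unit and $\xi_0$ lies in the span of earlier coordinates.
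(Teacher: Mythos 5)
Your overall strategy --- extend the coordinates of the core $W_0$ by twisters, process them in increasing order of $v$-value, invoke Lemma \ref{vy} for a normal form, and reuse the mechanism of Lemmas \ref{gcp}, \ref{gcv} and \ref{vhom} for the equivariant refinement --- is the same as the paper's, and the $G$-equivariant part of your sketch is essentially a correct replay of the proof of Lemma \ref{gcp}. However, there is a genuine gap at the central step, the construction of $s_i$. You apply Lemma \ref{vy} to competitors $u'$ with $u'|_X=y_i|_X$ and $v(u')=v(y_i)$ and claim the resulting $\eta$ lies in $y_i\cdot\mathfrak{m}+k[[\bx,y_1,\ldots,y_{i-1}]]$, ``which is exactly what lets me define $s_i$.'' This misquotes the lemma on the decisive point: its conclusion is $\eta\in y_i\cdot\mathfrak{m}+k[[\bx(\hat{y}_i)]]$, where $\bx(\hat{y}_i)$ consists of \emph{all} remaining coordinates, including $y_{i+1},\ldots,y_m$, and the summand $y_i\cdot\mathfrak{m}$ still involves $y_i$ itself; so the lemma does not hand you an $s_i$ in the earlier variables --- that containment is precisely what must be proved. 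Moreover, describing the ambiguity among representatives of $y_i|_X$ having the \emph{same} valuation does not produce the equation cutting out $W_{i-1}$ in $W_i$: for the tower to bottom out at the core, $y_i-s_i$ must vanish on $W_0$, so the relevant condition is agreement with $y_i$ on $W_0$, not merely on $X$, and such elements have strictly \emph{smaller} valuation.

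The paper's resolution, absent from your sketch, runs as follows. For each $i$ one considers the set $S_i$ of $u\in\Oo_W$ with $u|_{W_0}=y'_i|_{W_0}$ and $u\in y'_i\cdot\mathfrak{m}+k[[\bx(\hat{y}'_i)]]$; Lemma \ref{vy} forces $v(u)<v(y'_i)$ for every such $u$. One picks $s'_i\in S_i$ of maximal $v$-valuation and splits $s'_i=s_i+s''_i$ at the weight threshold $v(y'_i)$: because the $y'_j$ are ordered by increasing weight, every monomial involving $y'_i$ or a later $y'_j$ has weight at least $v(y'_i)$ and therefore lands in $s''_i$, so the low-weight part $s_i$ lies in $k[[\bx,y_1,\ldots,y_{i-1}]]$ automatically. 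The high-weight part is then absorbed by the coordinate change $y_i:=y'_i-s''_i$, which keeps $y_i$ a twister of the same weight, and $(y_i-s_i)|_{W_0}=(y'_i-s'_i)|_{W_0}=0$ yields the desired hypersurface presentation. You correctly flagged this bookkeeping as ``the main obstacle,'' but asserting that it follows from the ordering is not the same as carrying out the truncation-and-reabsorption argument that actually makes it work.
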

\begin{proof} Let $W_0$ be the smallest affine space in $W$ containing $X$.
	Assume that $W=\A^{n+m}$ with coordinats $\bx=(x_1,\ldots,x_n)$ and $\by'=(y'_1,\ldots,y'_m)$ where $\bx$ is a coordinate system of  $W_0$ and $\by'$ forms generators of $v$. We may assume that $v(y'_i)\leq v(y'_{i+1})$ for all $i$. 
 
 Fix $i$ and let $\bx(\hat{y_i}')$ denotes the complementary coordinates of $y'_i$ in $W$.  
 We consider 
	\[ S_i=\se{u\in\Oo_W}{u|_{W_0}=y'_i|_{W_0}, u \in y'_i \cdot \mathfrak{m} + k[[\bx(\hat{y_i}')]] }.\]  
	By Lemma \ref{vy}, we know that $v(u)<v(y'_i)$ for any $u\in S_i$. Let $s'_i\in S_i$ be an element admitting maximal $v$-valuation. We decompose \[s'_i=s_i+s''_i\] such that  $s_i$ consists the $v$-homogeneous part with $ v( s_i) < v(y'_i)$ and $v(s''_i)\geq v(y'_i)$. Let $y_i=y'_i-s''_i$. Since $v(s''_i)\geq v(y'_i)$, we know that $y_i$ is also a twister of $v$. Note that $(y_i-s_i)|_{W_0}=0$  for all $i$ and $(y_1-s_1=\ldots=y_m-s_m=0)\cong W_0$. Thus, one sees that  that $W_0$ in $W$ is defined by $y_1-s_1=\ldots=y_m-s_m=0$.
 
	Next, we assume that there is a finite group $G$ acting on $X$ such that $\mu$ is $G$-compatible. By Lemma \ref{gcp} we may assume that $x_1, \ldots, x_n$ and  $y'_1, \ldots, y'_m$ are all $G$-semi-invariants. We choose $s'_i$ to be an element in the following set
 \[ S^{G-si}_i=\se{u\in\Oo_W}{u|_{W_0}=y'_i|_{W_0}, u \in y'_i \cdot \mathfrak{m} + k[[{\bx}(\hat{y}'_i)]], u \text{ is $G$-semi-invariant} }.\] 
with maximum $v$-valuation. We decompose $s'_i=s_i+s''_i$ as above.  
By Lemma \ref{vhom}, we know that both $s_i$ and $s''_i$ are $G$-semi-invariants and correspond to the same character of $y'_i$. Let $y_i=y'_i-s''_i$. Then $y_i$ is a $G$-semi-invariant which corresponds to the same character as $s_i$. Hence $y_i-s_i$ is also a $G$-semi-invariant.
\end{proof}

\begin{lem}\label{tle}
	Assume that $\psi:\tl{X}\rightarrow X$ is a finite morphism which is isomorphic in codimension one. If $(X,\Delta)$ is a canonical pair, then so is $(X,\tl{\Delta})$, where $\tl{\Delta}=\psi\st\Delta$.\par  
	Moreover, if there exists an exceptional divisor $E$ over $X$ such that $\cen_XE$ do not contained in the ramification locus of $\phi$ and $a(E,X,\Delta)=0$, then $(\tl{X},\tl{\Delta})$ is not terminal. In this case, there exists an exceptional divisors $\tl{E}$ over $\tl{X}$ such that
	\begin{enumerate}[(1)]
	\item $\psi(\cen_{\tl{X}}\tl{E})=\cen_XE$.
	\item $a(\tl{E},\tl{X},\tl{\Delta})=0$.
	\item $\mu_{\tl{E}}(\psi\st u)=\mu_E(u)$ for any $u\in\Oo_X$.
	\end{enumerate}
\end{lem}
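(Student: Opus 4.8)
The plan is to read ``isomorphic in codimension one'' as \emph{\'etale in codimension one}, so that $\psi$ has no ramification divisor and $K_{\tl X}=\psi\st K_X$; hence $K_{\tl X}+\tl\Delta=\psi\st(K_X+\Delta)$ is a crepant pullback, and (with the harmless typo corrected) the first assertion is that $(\tl X,\tl\Delta)$ is canonical. Everything reduces to one ramification formula for discrepancies, which I would obtain as follows. For a divisor $F$ over $X$ with $\cen_X F=Z$, choose a proper birational $g\colon Y\to X$ carrying $F$ and write $K_Y+\Delta_Y=g\st(K_X+\Delta)$, so $\operatorname{coeff}_F\Delta_Y=-a(F,X,\Delta)$. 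Let $\tl Y$ be the normalization of $Y\times_X\tl X$, with finite $h\colon\tl Y\to Y$ and birational $\tl g\colon\tl Y\to\tl X$, and let $\tl F\subset\tl Y$ dominate $F$ with ramification index $e$. Comparing $\tl g\st(K_{\tl X}+\tl\Delta)=h\st g\st(K_X+\Delta)=h\st(K_Y+\Delta_Y)$ with $K_{\tl Y}=h\st K_Y+R_h$ gives $\Delta_{\tl Y}=h\st\Delta_Y-R_h$; reading the coefficient of $\tl F$ at its codimension-one point, where $\operatorname{coeff}_{\tl F}R_h=e-1$ in characteristic zero, yields
\[ 1+a(\tl F,\tl X,\tl\Delta)=e\,\bigl(1+a(F,X,\Delta)\bigr). \]
In words, log discrepancies multiply by the ramification index.

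For the first assertion, let $\tl F$ be any exceptional divisor over $\tl X$ and let $F$ be the divisor over $X$ cut out by the restriction of $\mu_{\tl F}$ to $k(X)$. Since $\psi$ is finite, $\cen_X F=\psi(\cen_{\tl X}\tl F)$ has the same dimension as $\cen_{\tl X}\tl F$, so $F$ is exceptional over $X$ and canonicity of $(X,\Delta)$ gives $1+a(F,X,\Delta)\ge 1$. As $e\ge 1$, the displayed formula forces $1+a(\tl F,\tl X,\tl\Delta)\ge 1$, i.e. $a(\tl F,\tl X,\tl\Delta)\ge 0$; hence $(\tl X,\tl\Delta)$ is canonical.

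For the second assertion I would exploit the hypothesis $\cen_X E\not\subset\operatorname{Ram}(\psi)$ to manufacture a lift with $e=1$. Since the branch locus has codimension $\ge 2$ and $Z=\cen_X E$ is not contained in it, $\psi$ is finite \'etale over the generic point $\eta_Z$; base-changing to $\operatorname{Spec}\Oo_{X,\eta_Z}$ makes this transparent. Because $E$ dominates $Z$ under $g$, the normalized base change $h$ is \'etale at the generic point of $E$, so any component $\tl E$ of $h^{-1}(E)$ has ramification index $e=1$. The formula then gives $a(\tl E,\tl X,\tl\Delta)=a(E,X,\Delta)=0$, which is~(2); property~(1) is immediate since $\tl E$ dominates $E$, so $\psi(\cen_{\tl X}\tl E)=\cen_X E$; and property~(3) is precisely $\mu_{\tl E}(\psi\st u)=e\,\mu_E(u)=\mu_E(u)$. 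Finally, the mere existence of an exceptional divisor over $\tl X$ with discrepancy exactly $0$ shows $(\tl X,\tl\Delta)$ is not terminal.

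The hard part will be the valuation-theoretic bookkeeping underpinning the ramification index: confirming that $\cen_X E\not\subset\operatorname{Ram}(\psi)$ genuinely forces $e=1$ along some (in fact every) $\tl E$ over $E$, and that the restricted valuation $\mu_{\tl E}|_{k(X)}$ equals $\mu_E$ exactly and not a multiple. This rests on the local claim that a finite morphism \'etale over $\eta_Z$ remains \'etale, after normalized base change, along any divisor dominating $Z$; I would prove it by reducing to $\operatorname{Spec}\Oo_{X,\eta_Z}$, where $\psi$ is finite \'etale, so the integral closure of the DVR $\Oo_{Y,F}$ in $k(\tl X)$ is a product of unramified DVRs. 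The remaining routine points are that $R_h$ has coefficient $e-1$ along $\tl F$ (tameness in characteristic zero) and that $K_{\tl X}=\psi\st K_X$, both consequences of \'etaleness in codimension one.
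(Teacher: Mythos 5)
Your proof is correct and follows essentially the same route as the paper's: the log-discrepancy multiplication formula $1+a(\tl F,\tl X,\tl\Delta)=e\,(1+a(F,X,\Delta))$ for the normalized fiber product, which gives canonicity since $e\ge 1$, and then $e=1$ along $\tl E$ because $\cen_X E$ avoids the branch locus, yielding (1)--(3). The only difference is that you derive this formula and the base-change construction directly, whereas the paper cites \cite[Proposition 2.14]{DL15} and \cite[Proposition 5.20]{km} for exactly these two ingredients.
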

\begin{proof}
	By \cite[Proposition 2.14]{DL15}, for any exceptional divisor $\tl{F}$ over $\tl{X}$, there exists an exceptional divisor $F$ over $X$ such that
	\[ a(\tl{F},\tl{X},\tl{\Delta})+1=r(a(F,X,\Delta)+1)\] for some positive integer $r$. Since the right hand side is always not less than one, we know that $a(\tl{F},\tl{X},\tl{\Delta})\geq 0$. Hence, $(\tl{X},\tl{\Delta})$ is canonical.\par
	Now, assume that there exists an exceptional divisor $E$ over $X$ such that $\cen_XE$ is not contained in the ramification locus of $\psi$ and $a(E,X,\Delta)=0$. We consider the following construction in \cite[Proposition 5.20]{km}. Let $Y\rightarrow X$ be a birational morphism such that $E$ appears in $Y$ and let $\tl{Y}$ be the normalization of the main component of $\tl{X}\times_XY$. Then, there exists a divisor $\tl{E}$ on $\tl{Y}$ which maps surjectively to $E$ through the induced morphism $\tl{Y}\rightarrow Y$. One can see that $\psi(\cen_{\tl{X}}\tl{E})=\cen_XE$. Moreover, since $\cen_XE$ is not contained in the ramification locus of $\psi$, we know that $\tl{Y}\rightarrow Y$ do not ramify along the generic point of $E$. Hence we have the relation $\mu_{\tl{E}}(\psi\st u)=\mu_E(u)$ for any $u\in\Oo_X$. Now, the computation in the proof of \cite[Proposition 5.20]{km} asserts that $a(\tl{E},\tl{X},\tl{\Delta})+1=a(E,X,\Delta)+1$, which implies that $a(\tl{E},\tl{X},\tl{\Delta})=0$.
\end{proof}

\begin{lem}\label{cv}
	Assume that $Y\rightarrow X$ is a divisorial contraction between $\Q$-factorial terminal threefolds which contracts a divisor $E$ to a curve $C$ passing through a non-Gorenstein point $P \in X$. Let $\psi:\tl{X}\rightarrow X$ be the canonical cover of $P \in X$. Then, there exists a divisorial contraction $\tl{Y}\rightarrow \tl{X}$ which contracts a divisor $\tl{E}$ to an irreducible component of the pre-image of $C$ on $\tl{X}$. One has that $\tl{Y}$ is terminal and $\mu_{\tl{E}}(\psi\st u)=\mu_E(u)$ for any $u\in\Oo_X$.
\end{lem}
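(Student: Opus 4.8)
The plan is to construct the divisorial contraction $\tl{Y} \to \tl{X}$ by taking a suitable birational model that dominates both $\tl{X}$ and $X$, and then extracting the right divisor using the general machinery already established, particularly Lemma \ref{tle}. The setup is precisely the one in which Lemma \ref{tle} applies: the canonical cover $\psi: \tl{X} \to \tl{X}$ is a finite morphism that is \'etale in codimension one (being a cyclic cover \'etale away from the non-Gorenstein locus), and we have a divisor $E$ over $X$ with $a(E,X)=0$ coming from the divisorial contraction $\phi: Y \to X$.

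First I would verify the discrepancy hypothesis. Since $\phi: Y \to X$ is a divisorial contraction of terminal threefolds contracting $E$ to the curve $C$, by definition $a(E,X) = a(E, X, 0)$ lies in $(0,1)$ in general; however, the key input we need is that after passing to the cover the relevant log discrepancy is $0$, so I would instead set up the framework so that $E$ is the divisor realizing the contraction and apply Lemma \ref{tle} with $\Delta = 0$ once we arrange $a(E,X)=0$, or more carefully invoke the construction in the proof of Lemma \ref{tle} directly: form $Y \to X$ extracting $E$, let $\tl{Y}$ be the normalization of the main component of $\tl{X} \times_X Y$, and extract the divisor $\tl{E}$ lying over $E$. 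Because $\cen_X E = C$ passes through $P$ but (generically) is not contained in the ramification locus of the cyclic cover $\psi$, the induced map $\tl{Y} \to Y$ does not ramify along the generic point of $E$, which immediately gives the valuation-matching relation $\mu_{\tl{E}}(\psi\st u) = \mu_E(u)$ for all $u \in \Oo_X$ from the unramifiedness of the local extension of DVRs.

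Next I would establish that $\tl{E}$ is contracted to an irreducible component of $\psi^{-1}(C)$ and that $\tl{Y} \to \tl{X}$ is genuinely a divisorial contraction of terminal threefolds. The center $\cen_{\tl{X}} \tl{E}$ surjects onto $\cen_X E = C$ under $\psi$ by the construction, so it is a component of the preimage $\psi^{-1}(C)$; irreducibility of that component follows since $\tl{E}$ is irreducible and maps onto it. For terminality of $\tl{Y}$, I would use that $\tl{X}$ is terminal (the canonical cover of a terminal singularity is terminal, indeed canonical Gorenstein), and that the discrepancy comparison in Lemma \ref{tle}, namely $a(\tl{E}, \tl{X}) = a(E,X)$, together with the fact that $\tl{Y} \to \tl{X}$ extracts only the single divisor $\tl{E}$, forces the relative Picard number and discrepancy conditions defining a divisorial contraction. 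One must also check that $\tl{Y}$ has only terminal singularities, which I expect to follow from the terminality of $Y$ pulled back through the \'etale-in-codimension-one cover $\tl{Y} \to Y$ (crepant base change preserves terminality away from the ramification, and the ramification meets $\tl{Y}$ only in high codimension).

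The main obstacle I anticipate is twofold. First, one must carefully confirm that $\tl{Y} \to \tl{X}$ contracts exactly one divisor and is $K$-negative, i.e.\ that it is a bona fide divisorial contraction rather than a more general birational morphism; this requires controlling the fibers of $\tl{Y} \to Y$ over the non-Gorenstein point $P$, where the cover is ramified, and ensuring no extra exceptional divisors are introduced there. Second, the irreducibility of the relevant component of $\psi^{-1}(C)$ and the normality of $\tl{Y}$ must be handled with care precisely over $P$, since this is where the preimage of $C$ can a priori break up or acquire bad singularities. I would address these by restricting attention to the generic behavior along $C$ to fix the divisor $\tl{E}$ and its valuation, and then arguing that the terminality of $\tl{X}$ and $\tl{Y}$ (hence $\Q$-factoriality in the analytic-local sense) rules out superfluous exceptional divisors, so that $\tl{Y} \to \tl{X}$ is the desired divisorial contraction.
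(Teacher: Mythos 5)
Your proposal correctly identifies the two deliverables (the valuation identity via unramifiedness at the generic point of $E$, and the existence of $\tl{Y}\to\tl{X}$), and the valuation-matching part is essentially the argument the paper uses inside Lemma \ref{tle}. But there is a genuine gap at the heart of the construction: Lemma \ref{tle} requires a divisor with $a(E,X,\Delta)=0$, and for a divisorial contraction between \emph{terminal} threefolds one has $a(E,X,0)>0$, so the lemma simply does not apply with $\Delta=0$. You notice this and say you would ``arrange $a(E,X)=0$,'' but never say how. The paper's resolution is the key missing idea: choose a sufficiently ample reduced divisor $A_Y$ on $Y$ meeting an exceptional curve in at least two points, push it down to $A$ on $X$, and set $c=\mathrm{ct}(X,A)<1$, so that $a(E,X,cA)=0$; moreover, by the tie-breaking trick one arranges that $E$ is the \emph{unique} divisor with $a(\cdot,X,cA)=0$. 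This boundary is what makes Lemma \ref{tle} applicable (giving $(\tl{X},c\tl{A})$ canonical but not terminal, with a crepant divisor $\tl{E}$ over the preimage of $C$), and the uniqueness is what makes the subsequent extraction clean.

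The second gap is in how $\tl{Y}$ is produced. You propose taking the normalization of the main component of $\tl{X}\times_X Y$ as the actual model $\tl{Y}$, and you yourself list the resulting obstacles (single exceptional divisor, $\Q$-factoriality, terminality, $K$-negativity, behavior over the ramification point $P$); your proposed fix — that terminality of $\tl{Y}$ rules out extra exceptional divisors — is circular, since terminality of $\tl{Y}$ is one of the conclusions to be proved, and the fiber product can acquire extra exceptional components exactly over $P$. In the paper the fiber product is used only inside Lemma \ref{tle} to identify the valuation $\tl{E}$; the model $\tl{Y}$ is then constructed separately by the extraction theorem \cite[Theorem 17.10]{flips} applied to $(\tl{X},c\tl{A})$, which produces a birational morphism whose exceptional locus is exactly $\tl{E}$ (here uniqueness from tie-breaking is essential), and terminality of $\tl{Y}$ follows because this model is obtained by running an MMP on the terminal pair $(V,cA_V+\epsilon F)$ with $c<1$. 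Without the boundary $cA$, the tie-breaking, and the appeal to the extraction theorem, the proposal does not close.
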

\begin{proof}
	Let $A_Y$ be a sufficiently ample, irreducible and reduced divisor on $Y$ which intersects an exceptional curve of $Y\rightarrow X$ at at least two points. Since $Y$ has only isolated singularities, we may assume that $(Y,A_Y)$ is log smooth near the support of $A_Y$. Let $A$ be the push-forward of $A_Y$ on $X$ and let $c=\textup{ct}(X,A)$. Then $c<1$. Also, we may assume that $E$ is the only exceptional divisor over $X$ such that $a(E,X,cA)=0$ by the tie-braking trick. Let $\tl{A}=\psi\st A$. Notice that $\psi$ is \'{e}tale away from finite points. Hence, by Lemma \ref{tle}, we know that $(\tl{X},c\tl{A})$ is canonical but not terminal and there exists an exceptional divisor $\tl{E}$ over $\tl{X}$satisfies $\psi(\cen_{\tl{X}}\tl{E})=C$, $a(\tl{E},\tl{X},c\tl{A})=0$ and $\mu_{\tl{E}}(\pi\st u)=\mu_E(u)$ for any $u\in\Oo_X$. By \cite[Theorem 17.10]{flips}, there exists a birational morphism $\tl{Y}\rightarrow \tl{X}$ such that $\textup{exc}(\tl{Y}\rightarrow \tl{X})$ is exactly the divisor $\tl{E}$. Notice that $\tl{Y}$ is obtained by first taking a log resolution $V\rightarrow \tl{X}$ of $(\tl{X},c\tl{A})$ and then running $(K_V+cA_V+\epsilon F)$-MMP over $\tl{X}$ for some effective divisor $F$ which is supported on $\textup{exc}(V\rightarrow \tl{X})$ and  $\epsilon\ll 1$. Since $c<1$, the pair $(V,cA_V+\epsilon F)$ is terminal. Therefore $\tl{Y}$ is terminal.
\end{proof}

The following refined version of representing a given pre-valuation compatible with a given finite cyclic group $G$ action works not only for the purpose of proving Theorem 1.1 but also works for the purpose of classification.  

\begin{pro} \label{group}
	Let $Y\rightarrow X$ be a divisorial contraction to a curve $C$ between $\Q$-factorial terminal threefolds. Let $\tl{X}$ be the canonical cover of $X$ and let $G$ be the cyclic group such that $X=\tl{X}/G$. Assume that the pre-image of $C$ on $\tl{X}$ is irreducible and assume that $\tl{Y}\rightarrow\tl{X}$ is the divisorial contraction in Lemma \ref{cv}. If $(\tl{W},v)$ is a minimal orthogonal valuation which represents $\tl{Y}\rightarrow \tl{X}$, then there exists a sequence of embedding $\tl{X} \hookrightarrow \tl{W}_0 \hookrightarrow \cdots \hookrightarrow \tl{W}_m$ compatible with $G$-action  such that
	\begin{enumerate}[(1)]
	\item $\tl{W}_{i-1} \hookrightarrow \tl{W}_i$ is defined by a $G$-semi-invariant $y_i-s_i$ for all $1 \le i \le m$;  

       \item $\tl{W}_0$ has coordinates $\bx=(x_1,\ldots, x_4)$ which are $G$-semi-invariants and $\tl{W}_m$ has coordinates $\bx$ and $ \by=(y_1,\ldots, y_m)$ which are $G$-semi-invariants;

	\item $Y\rightarrow X$ can be obtained by the weighted blow-up respects to the embedding $X\hookrightarrow W/G$ and the valuation $v$.
	\end{enumerate}
\end{pro}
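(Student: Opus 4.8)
The plan is to transport everything to the canonical cover $\tl{X}$, invoke the embedding construction of Proposition \ref{gact} there with the cyclic group $G$, and then descend by the $G$-quotient. By Lemma \ref{cv} I already have the required $G$-equivariant data: a divisorial contraction $\tl{Y}\to\tl{X}$ contracting an irreducible prime divisor $\tl{E}$ to the pre-image $\tl{C}:=\psi^{-1}(C)$, which is irreducible by hypothesis, with $\tl{Y}$ terminal and $\mu_{\tl{E}}(\psi\st u)=\mu_E(u)$ for all $u\in\Oo_X$. So I set $\mu:=\mu_{\tl{E}}$ and the whole task reduces to checking that $(\tl{W},v)$ together with $\mu$ meets the hypotheses of Proposition \ref{gact} and then reading off the conclusions.

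To verify those hypotheses I would argue as follows. First, $\mu_{\tl{E}}$ is the valuation of a prime divisor, hence a pre-valuation, and its center $\cen_{\tl{X}}\tl{E}=\tl{C}$ is irreducible and passes through the origin $o\in\tl{X}$ lying over the non-Gorenstein point $P\in X$, since $C\ni P$. Second, by the classification of terminal singularities recalled at the start of Section 2, the canonical cover $\tl{X}$ is a $cDV$ hypersurface singularity of embedding dimension $\le 4$, so its core $\tl{W}_0$ is at most $4$-dimensional; this produces the coordinates $\bx=(x_1,\ldots,x_4)$ of item (2) (padding trivially if the embedding dimension is smaller). Third, for $G$-compatibility I would note that $\tl{Y}\to\tl{X}$ is the extremal divisorial extraction constructed functorially from the canonical cover and the normalized fibre product, with $Y=\tl{Y}/G$; hence $G$ lifts to $\tl{Y}$ and, since $\tl{E}$ is the unique prime exceptional divisor (equivalently, the unique divisor of discrepancy zero over $\tl{X}$ with center the $G$-invariant $\tl{C}$), each $\sigma\in G$ fixes $\tl{E}$. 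Therefore $\mu_{\tl{E}}(\sigma u)=\mu_{\tl{E}}(u)$, i.e. $\mu$ is $G$-compatible in the sense of the definition preceding Lemma \ref{gcv}.

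Granting the hypotheses, Proposition \ref{gact} immediately yields the chain $\tl{X}\hookrightarrow\tl{W}_0\hookrightarrow\cdots\hookrightarrow\tl{W}_m$ with each step cut out by a $G$-semi-invariant $y_i-s_i$, with all coordinates $\bx$ and $\by$ being $G$-semi-invariants, and with $(\bx,\by)$ generating $v$; these are exactly items (1) and (2). Internally this step is where Lemmas \ref{gcp} and \ref{vhom} do the work of making the coordinate changes and the successive tiltings $G$-semi-invariant. For item (3) I would then use that $v|_{\tl{X}}=\mu_{\tl{E}}$ is a genuine valuation: by the restriction criterion (Lemma \ref{restr} together with Proposition \ref{prime}), the proper transform $\tl{X}'\subset\tl{W}$ of $\tl{X}$ has irreducible exceptional divisor $E'$ with $\mu_{E'}=v|_{\tl{X}}=\mu_{\tl{E}}$. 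Since $\tl{X}'\to\tl{X}$ and $\tl{Y}\to\tl{X}$ are divisorial extractions of one and the same valuation over the terminal threefold $\tl{X}$, uniqueness of such an extraction identifies $\tl{X}'\cong\tl{Y}$. Finally, as every coordinate and every $y_i-s_i$ is $G$-semi-invariant, the weight $\bw$ is $G$-compatible, so the weighted blowup $\tl{W}\to\tl{W}$ descends to $\tl{W}/G=:W/G$; the embedding $\tl{X}\hookrightarrow\tl{W}$ descends to $X=\tl{X}/G\hookrightarrow W/G$, and $\tl{Y}=\tl{X}'$ descends to $\tl{X}'/G$, recovering $Y\to X$ as the weighted blowup in $W/G$ with weight $\bw$.

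The main obstacle I expect is precisely the identification packaged in item (3): one must reconcile the purely valuative output of Proposition \ref{gact} with the birational geometry, namely that the proper transform $\tl{X}'$ is normal and terminal and really equals the extremal extraction $\tl{Y}$ (not merely a birational competitor with the same valuation), and simultaneously that forming the weighted blowup commutes with passing to the $G$-quotient. The $G$-compatibility of $\mu_{\tl{E}}$ is the secondary delicate point: although it follows from functoriality, it genuinely relies on $\tl{E}$ being $G$-invariant, for which I would lean on the irreducibility of $\tl{C}$ and the uniqueness of the discrepancy-zero divisor established through Lemma \ref{cv}.
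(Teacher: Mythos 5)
Your overall skeleton matches the paper's proof exactly: verify the hypotheses of Proposition \ref{gact} for $\mu=\mu_{\tl{E}}$, apply it to get (1) and (2), and then use the relation $\mu_{\tl{E}}(\psi\st u)=\mu_E(u)$ from Lemma \ref{cv} to descend and obtain (3). The one place where the two arguments genuinely diverge is the key step, the $G$-compatibility of $\mu_{\tl{E}}$, and this is where your version has a gap. You derive it from the claims that $G$ lifts to $\tl{Y}$ ``by functoriality'' and that $\tl{E}$ is the unique discrepancy-zero divisor over $\tl{X}$ with center $\tl{C}$. Neither claim is available off the shelf: the construction of $\tl{Y}\to\tl{X}$ in Lemma \ref{cv} depends on non-canonical choices (the ample divisor $A_Y$, the tie-breaking, the MMP runs), so there is no formal functoriality to invoke; and the tie-breaking in that proof makes $E$ the unique crepant divisor for the pair $(X,cA)$ downstairs, not $\tl{E}$ upstairs --- to get uniqueness over $(\tl{X},c\tl{A})$ you would additionally have to show that only one divisor over $\tl{X}$ lies over $E$, which amounts to irreducibility of the preimage of the \emph{generic point of $E$} (a linear-disjointness argument using connectedness of the fibers of $E\to C$), not just irreducibility of $\psi^{-1}(C)$. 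You flag this as a delicate point but do not close it.

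The paper avoids all of this with a short local computation: since $\tl{Y}\to\tl{X}$ is a divisorial contraction to a curve between terminal threefolds, it is generically the ordinary blow-up of the smooth curve $\tl{C}$ inside the smooth locus of $\tl{X}$, so at a general point $P\in\tl{C}$ one has local coordinates $(x,y,z)$ with $\tl{C}=(x=y=0)$ and $\mu_{\tl{E}}(u)=\min\{i+j \mid a_{i,j}(z)\neq 0\}$ for $u=\sum a_{i,j}(z)x^iy^j$, i.e.\ $\mu_{\tl{E}}$ is the order of vanishing along $\tl{C}$. Since $\sigma(P)\in\tl{C}$ and $(\sigma(x),\sigma(y),\sigma(z))$ is again such a coordinate system, $\mu_{\tl{E}}(\sigma(u))=\mu_{\tl{E}}(u)$ follows at once from the $G$-invariance of the irreducible curve $\tl{C}$, with no appeal to discrepancies or to the particular construction of $\tl{Y}$. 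Your route can be repaired along the lines sketched above, but it is strictly harder than the problem requires; I would replace that step with the generic-blow-up computation. The extra detail you supply for item (3) (identifying the proper transform with $\tl{Y}$ via Proposition \ref{prime} and descending the weighted blow-up to $W/G$) is a reasonable expansion of what the paper states tersely and is consistent with the machinery of Sections 2 and 3.
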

\begin{proof}
	Let $\tl{E}=\textup{exc}(\tl{Y}\rightarrow\tl{X})$. Then, $v_{\tl{E}}$ is $G$-compatible. Indeed, since $\tl{Y}\rightarrow \tl{X}$ is a divisorial contraction to a curve $\tl{C}$ and both $\tl{Y}$ and $\tl{X}$ are terminal, we know that it is generically a blow-up. That is, if $P$ is a general point on $\tl{C}$, then there exists a local coordinate system $(x,y,z)$ near $P$ such that $\tl{C}$ is defined by $(x=y=0)$, and for any $u=\sum_{i,j\geq 0}a_{i,j}(z)x^iy^j$ one has that $\mu_{\tl{E}}(u)=\min\se{i+j}{a_{i,j}(z)\neq 0}$. Let $\sigma$ be a generator of $G$. Since $\sigma(P)\in\tl{C}$ and $(\sigma(x),\sigma(y),\sigma(z))$ also form a local coordinate system near $\sigma(P)$, we know that $\mu_{\tl{E}}(u)=\mu_{\tl{E}}(\sigma(u))$. Now (1) follows by Proposition \ref{gact}. Since $\mu_{\tl{E}}(\phi\st u)=\mu_E(u)$ for any $u\in\Oo_X$ where $E=\textup{exc}(Y\rightarrow X)$, we know that the induced weighted blow-up with respects to the embedding $X\hookrightarrow W/G$ is exactly $Y\rightarrow X$.
\end{proof}

 \begin{proof}[Proof of Theorem 1.1.]
 By Theorem \ref{wbu} and Proposition \ref{group}, we see Theorem 1.1.
 \end{proof}

\section{normalization of divisorial contractions}
In this section we assume that $X$ has Gorenstein singularities. After restricting to a suitable analytic neighborhood, we may assume that there exists an embedding $X\hookrightarrow W=\A^4$.\par 
In order to realize the weighted blowup explicitly, we need to work out the realization by weighted blowup more explicitly.
The first step is to show that we can choose $\{s_1,\ldots, s_k\}$ a generating set of $I_{C/W}$ nicely so that the defining equation of $X$ in $W$ admits a nice form for the purpose of classification. 

As both $C$ and $X$ have isolated singularities, there exists an affine open set $U \subset W$ (possibly $P \not \in U$) satisfying the following:
\begin{itemize}
\item  $C_U:=C|_U$ is a smooth curve in a smooth threefold $X_U:=X|_U$.
\item Let $I_{C_X}$ be the ideal sheaf of $C$ in $X$ and $I_{C_U}$ be the ideal sheaf of $C_U$ in $X_U$. One has $I_{C_U}/I^2_{C_U}$ is an rank $2$ free $\Oo_{X_U}$-module.
\end{itemize}

We can pick $s_1,s_2$ in $I_{C/W}$ mapping to $\bar{s}_1, \bar{s}_2 \in I_{C/X}$ so that their images  via the natural restriction map
 \[ I_{C/W} \to I_{C_X} \to I_{C_U} \to  I_{C_U}/I^2_{C_U} \]
 forms a basis as free $\Oo_{X_U}$-module. It is clear that $\mu_E(\bar{s}_i) = \mu_{E \cap \phi^{-1}(U)}(\bar{s}_i|_U)=1$ for $i=1,2$.

We can then extend $\{s_1, s_2\}$ into a generating set of $I_{C/W}$, denoted $\mathcal{B}:=\{ s_1, \ldots, s_k\}$. As in the proof of Theorem \ref{wbu}, we can define an embedding $W \hookrightarrow W'=W\times\A^k_{(y_1,...,y_k)}$ so that $W$ is defined by $(y_1-s_1=...=y_k-s_k=0) $.
On $W'$, we consider an orthogonal valuation $v_{\mathcal{B}}$ defined by $v_{\mathcal{B}}(u)=0$ if $u\in \Oo_W$, and
\[v_{\mathcal{B}}(y_i) = \left\{ \begin{array}{ll} \mu_E(\bar{s}_i), &  \text{ if }  \bar{s}_i \ne 0; \\ 1, &   \text{ if }  \bar{s}_i= 0\end{array}\right.\]

After reordering and removing redundancy, we may assume that
\begin{enumerate}
\item $v_{\mathcal{B}}(s_i)\leq v_{\mathcal{B}}(s_{i+1})$
\item $s_i \not \in  \Oo_W[s_1,...,s_{i-1},s_{i+1},...,s_k]$;
\item $\bar{s}_1|_U$ and $\bar{s}_2|_U$ generate $I_{C_U}/I^2_{C_U}$ as $\Oo_{X_U}$-module.
\end{enumerate}
We call such a generating set a {\it minimal ordered generating set} of $I_{C/W}$. Notice that $X\subset W$ is defined by a principle ideal which is contained in $I_{C/W}$. Therefore, if $\bar{s}_i=0$, then $s_i$ is exactly the defining equation of $X$, and $\bar{s}_j\neq 0$ if $i\neq j$.

Next we would like to write an expansion of $x \in \Oo_W$ by $s_1,s_2$.

\begin{prop} Let $\mathfrak{m}$ be the maximal ideal of $o \in W$. Suppose that $R$ is complete with respect to $\mathfrak{m}$ (e.g. take $R$ to be the $\mathfrak{m}$-adic completion of $\Oo_W$).

Then for any $u \in I_{C/W}$, there exists $\delta_u \in R-I_{C/W}$ and positive integers $m \le M$ so that
\[ u \cdot \delta_u \equiv \sum_{m \le i+j \le M} a_{ij} s_1^i s_2^j=\colon p_u(s_1,s_2)  \quad (\text{mod } I_X \cdot R),\]
with $a_{ij} \in  R-I_{C/W}$ if non-zero.
\end{prop}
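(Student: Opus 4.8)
The plan is to produce the expansion at the generic point of $C$ and then descend. Since $o\in X$ is an isolated Gorenstein terminal singularity it is an isolated hypersurface singularity, so its defining equation $f$ is irreducible and $A:=R/I_X R=\hat{\Oo}_X$ is a complete local domain. Set $\mathfrak{p}:=I_{C/X}A$, a height-two prime with $A/\mathfrak{p}=\hat{\Oo}_C$ of dimension one. Localizing at the generic point $\xi$ of $C$, the ring $A_{\mathfrak{p}}$ is a two-dimensional regular local ring whose maximal ideal is generated by $\bar{s}_1,\bar{s}_2$; this is exactly the content of the generic smoothness arranged on $U$, where the images of $s_1,s_2$ freely generate $I_{C_U}/I^2_{C_U}$ and hence generate $I_{C_U}$ near $\xi$ by Nakayama. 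Passing to the completion, $\widehat{A_{\mathfrak{p}}}\cong K[[\bar{s}_1,\bar{s}_2]]$ with $K:=\operatorname{Frac}(A/\mathfrak{p})=k(C)$. Because $A$ is a domain, the composite $A\hookrightarrow A_{\mathfrak{p}}\hookrightarrow \widehat{A_{\mathfrak{p}}}$ is injective (localization of a domain, followed by Krull's theorem for the Noetherian local ring $A_{\mathfrak{p}}$). Hence it suffices to establish the asserted identity inside $K[[\bar{s}_1,\bar{s}_2]]$, lift the coefficients back to $R$, and invoke this injectivity to conclude that the identity already holds modulo $I_X R$.

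In $K[[\bar{s}_1,\bar{s}_2]]$ the element $\bar{u}$ is nonzero, so it has a finite order $m:=\operatorname{ord}_\xi(\bar{u})\ge 1$ and a nonzero leading form $\sum_{i+j=m}\bar{c}_{ij}\bar{s}_1^i\bar{s}_2^j$ with $\bar{c}_{ij}\in K$. The $\bar{c}_{ij}$ are fractions of elements of $A/\mathfrak{p}=\hat{\Oo}_C$; clearing a common denominator by a single $\delta\in A\setminus\mathfrak{p}$ and lifting the numerators to $R$ produces coefficients $a_{ij}\in R\setminus I_{C/W}$, since an element of $A$ lies outside $\mathfrak{p}$ precisely when it is a unit of $A_{\mathfrak{p}}$, i.e. when it does not vanish on $C$. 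Subtracting the lifted leading form raises the order at $\xi$. To force termination I would invoke the Weierstrass preparation theorem in $K[[\bar{s}_1,\bar{s}_2]]$: after factoring off the largest power of $\bar{s}_1$ dividing $\bar{u}$ (a factor whose coefficient is manifestly a unit on $C$), the remaining factor is $\bar{s}_2$-regular and equals a unit times a distinguished polynomial of finite degree $N$ in $\bar{s}_2$. This bounds the $\bar{s}_2$-degrees that can occur; the residual power-series tail, lying in $(\bar{s}_1,\bar{s}_2)^{>M}$ for a suitable $M$, is then absorbed into the top total-degree coefficients $a_{ij}$ with $i+j=M$, and one checks these remain outside $I_{C/W}$ by arranging the absorption to load only onto monomials $\bar{s}_1^i\bar{s}_2^j$ that already carry a nonzero $\bar{s}$-jet of $\bar{u}$. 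Taking $\delta_u$ to be the product of the finitely many denominators used then yields the statement.

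The \emph{main obstacle} is precisely this finiteness together with the coefficient control. A priori the degree-by-degree expansion of $\bar{u}$ along $C$ is an infinite series, and naively each step both raises the order and introduces a fresh denominator, so neither the upper bound $M$ nor a single $\delta_u$ is automatic; moreover a clean multiplicative factor cannot simply be moved across, since the Weierstrass unit does not descend from $A$. The crux is therefore to combine the Weierstrass bound on one of the two degrees with a leading-form and monomial-ideal argument showing that the infinite tail can be reorganized into finitely many bounded-degree monomials whose coefficients do not vanish on $C$; the hypotheses that $\bar{s}_1,\bar{s}_2$ generate the conormal of $C$ generically and that $A_{\mathfrak{p}}$ is a two-dimensional regular local ring are what make this reorganization possible. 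Once $M$ and $\delta_u$ are secured, descent from $K[[\bar{s}_1,\bar{s}_2]]$ to $\hat{\Oo}_X$, hence the congruence modulo $I_X R$, is immediate from the injectivity noted in the first paragraph.
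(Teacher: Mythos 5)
Your setup---passing to the generic point $\xi$ of $C$, where $\bar s_1,\bar s_2$ generate the maximal ideal of the two-dimensional regular local ring $A_{\mathfrak{p}}$, and expanding $\bar u$ there---is in the same spirit as the paper, which instead works over a Zariski-open $U\subset X$ on which $I_{C_U}/I_{C_U}^2$ is free on $\bar s_1,\bar s_2$. But the two steps you yourself single out as ``the main obstacle'' and ``the crux'' are not actually carried out, and the tools you invoke do not do what you claim. First, the finiteness. Weierstrass preparation gives $\bar u=\bar s_1^a\cdot E\cdot P$ with $E$ a unit of $K[[\bar s_1,\bar s_2]]$ and $P$ distinguished of degree $N$ in $\bar s_2$; this does \emph{not} bound the $\bar s_2$-degrees occurring in the support of $\bar u$ (the unit $E$ contributes arbitrarily high powers of $\bar s_2$, e.g.\ $E=(1-\bar s_1\bar s_2)^{-1}$), and your fallback of absorbing the tail ``into the top total-degree coefficients $a_{ij}$ with $i+j=M$'' fails because a tail monomial need not be divisible by any degree-$M$ monomial of the support: for $\bar u=\bar s_1^5+\bar s_1\bar s_2+\bar s_2^7$ the monomials one must keep are the \emph{minimal} elements of the support for the componentwise order, of total degrees $2$, $5$ and $7$. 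What is needed is exactly Dickson's lemma, which is the paper's convex-hull argument: finitely many minimal pairs $(i_k,j_k)$, every other term divisible by one of them, and each reorganized coefficient equal to $b_{i_kj_k}$ plus an element of $I_{C}$, hence still non-vanishing along $C$. (One could salvage Weierstrass by expanding $P=\bar s_2^N+\sum_{i<N}c_i(\bar s_1)\bar s_2^i$ and writing each nonzero $c_i$ as $\bar s_1^{d_i}$ times a unit, but then every coefficient carries the factor $E$, which runs into the second problem.)

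Second, and more seriously, the coefficients your construction produces live in $\widehat{A_{\mathfrak{p}}}\cong K[[\bar s_1,\bar s_2]]$, not in the image of $R$ after inverting a single element. The Weierstrass unit of an element of $A_{\mathfrak{p}}$ need not lie in $A_{\mathfrak{p}}$, and your degree-by-degree expansion introduces a fresh denominator from $K=\operatorname{Frac}(A/\mathfrak{p})$ at each of infinitely many steps, so ``taking $\delta_u$ to be the product of the finitely many denominators used'' is not available; you name this difficulty but do not resolve it. The paper's proof is built precisely to avoid it: it never completes at $\xi$, but uses the freeness of $I_{C_U}^n/I_{C_U}^{n+1}$ as an $\Oo_{X_U}$-module to produce, at every stage, coefficients $b_{ij}$ lying in the single localization $R_{\delta_0}$ with $b_{ij}\notin I_{C/U}$, so that the reorganized $a_{i_kj_k}$ again lie in $R_{\delta_0}\setminus I_{C/U}$ and one power of $\delta_0$ clears all denominators at the end. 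As written, your argument has a genuine gap at both the finiteness step and the descent of coefficients.
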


The advantage of this expression is that $\mu_E(\bar{u})$ is usually equal to $m$ if $\bar{u} \ne 0$. However,  this expression is not unique.

\begin{proof}
 Let $\mathfrak{m}$ and $\bar{\mathfrak{m}}$ be the maximal ideal of $o \in W$ and $o \in X$ respectively. Then we denote ${R}$  (resp. ${S}$) to be the $\mathfrak{m}$-adic (resp. $\bar{\mathfrak{m}}$) completion of $\Oo_W$ (resp. $\Oo_X$). The surjective homomorphism $\pi: \Oo_W  \to \Oo_X$ yields a surjective homomorphism ${\pi}: {R} \to {S}$.

 Shrinking $U$ if necessary, we may choose a $\delta_0 \in \Oo_W$ so that the restriction of $W$ to $U$ (resp. $X$ to $X_U$) yields a localization $R_U:=R_{\delta_0}$ (resp. $S_U:=S_{\bar{\delta}_0}$). Again, we have an induced surjective homomorphism ${\pi}_U: {R}_U \to {S}_U$.

 \noindent {\bf Claim.} For any $y \in I_{C/U}^{n} - I_{C/U}^{n+1}$, there is a decomposition $y \equiv q(y) +r(y) \quad (\text{mod } I_{X_U} \cdot R_U)$ satisfying:
 \begin{itemize}
 \item $q(y)= \sum_{i+j=n} b_{ij} s_1^i s_2^j \ne 0 $ for some $b_{ij} \in R_U-I_{C/U}$ if non-zero;
  \item $r(y) \in I_{C/U}^{n'}$ for some $n' > n$.
  \end{itemize}

To see this, let us consider the surjective map \[I_{C_U/U}^n \to  I_{C_U/X_U}^n \to I_{C_U/X_U}^n/I_{C_U/X_U}^{n+1}=:\mathcal{M}.\]
As $\mathcal{M}$ is free $\Oo_{X_U}$-module generated by $\{ \bar{s}_1^i \bar{s}_2^j\}_{i+j=n}$, any element $y \in I_{C_U/U}^n$ can be represented  as $\sum_{i+j=n} c_{ij} s_1^i s_2^j$ for some $c_{ij} \in R_U$ modulo $I_{X_U}$. Let \[q(y):= \sum_{ c_{ij} \not \in I_{C_U} } c_{ij} s_1^i s_2^j, \quad r(y):=\sum_{ c_{ij}  \in I_{C_U} } c_{ij} s_1^i s_2^j.\] Note that $q(y) \ne 0$ otherwise $y \in I_{C_U/U}^{n+1}$, which is a contradiction.

 Thus for any $u \in I_{C/W}$, we consider its restriction to $U$ and abuse the notation by denoting it as $u \in I_{C/U}$. We can define sequences as follows:\\
 Since $u \in I^{n_1}_{C/U}- I^{n_1+1}_{C/U}$ for some $n_1$, we set $r_1:=r(u)$ and $q_1:=q(u)$. Inductively,
 as $r_k \in  I^{n_k}_{C/U}- I^{n_k+1}_{C/U}$ for some $n_k$, we define $r_{k+1}:=r(r_k)$ and $q_{k+1}:=r(r_k)$.
 It is straight forward to see that $u= r_N + \sum_{j=1}^N q_j$ for any $N \ge 1$. Then one sees that $\{ r_N\}$ converges to $0$ and hence $ \sum_{k=1}^N q_k=\sum_{k=1}^N \sum_{i+j=n_k} b_{ij}s_1^i s_2^j$ converges to $u$.

 	Let $\mathcal{R} \subset\Z_{\geq0}^2$ be the convex hull contains all the pairs $(i,j)$ such that $b_{ij}\neq 0$. There are only finitely many pairs
	$(i_1,j_1)$, ..., $(i_n,j_n)$ which are lying on the boundary of $\mathcal{R}$. Any other pair $(i,j)$ had the property that $(i,j)> (i_k, j_k)$ for some $k$ in the sense that $i \ge i_k, j \ge j_k$ and $i+j > i_k+j_k$. Suitably collecting terms $b_{ij}s_1^is_2^j$ with $(i,j) \ge (i_k,j_k)$ we have
\[ \sum_{(i,j)\ge (i_k,j_k)} b_{ij} s_1^i s_2^j = (b_{i_k j_k} +\sum_{(i,j) > (i_k,j_k)} b_{ij} s_1^{i-i_k} s_2^{j-j_k} ) s_1^{i_k}s_2^{j_k}=: a_{i_k j_k} s_1^{i_k} s_2^{j_k}  \] such that $ a_{i_k j_k} \in R_U-I_{C/U}$. Hence over the open set $U$,
\[ u = \sum_{1 \le k \le n}  a_{i_k j_k} s_1^{i_k} s_2^{j_k}\] with $ a_{i_k j_k} \in R_U-I_{C/U}$.

As this is a finite sum, we set $m=\min\{ i_k+j_k\}$ and $M=\max\{ i_k +j_k\}$ and we can rewrite
\[ u = \sum_{m \le i+j \le M} a_{i j} s_1^{i} s_2^{j}\] over $U$.
Multiplying both sides by certain power of $\delta_0$ will conclude the proof.
\end{proof}


\begin{cor}\label{norm} Notation as above. Suppose that $X \subset W$ is a hypersurface containing $C$ and defined by $f$. Let $\mathcal{B}:=\{ s_1, \ldots, s_k\}$ be a minimal ordered generating set. Then, we may assume that
\[ f= s_3 \delta_{s_3} - p_3(s_1, s_2)\] in $\Oo_W$.
\end{cor}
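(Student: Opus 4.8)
The plan is to apply the expansion Proposition of the preceding subsection to the defining equation $f$, and then exploit the special structure of the minimal ordered generating set $\mathcal{B}$. Since $X \subset W$ is a hypersurface defined by $f$ and contains $C$, we have $f \in I_{C/W}$. The Proposition then provides $\delta_f \in R - I_{C/W}$ and integers $m \le M$ so that
\[ f \cdot \delta_f \equiv \sum_{m \le i+j \le M} a_{ij} s_1^i s_2^j \quad (\text{mod } I_X \cdot R), \]
with each nonzero $a_{ij} \in R - I_{C/W}$. The point is that working modulo $I_X \cdot R$ is harmless here because $I_X = (f)$, so the congruence is an honest relation after clearing the $f$-multiple; I would reorganize this into an equality $f \cdot \delta_f = (\text{polynomial in } s_1, s_2) + (\text{multiple of } f)$ in $R$, and absorb the $f$-multiple into the left-hand side to get $f \cdot \delta'_f = p(s_1,s_2)$ for a suitably modified $\delta'_f$.

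The key step is to show that exactly one term of the expansion is linear in a single new generator, namely that $p(s_1, s_2)$ can be arranged so that the relation reads $f = s_3 \delta_{s_3} - p_3(s_1,s_2)$. First I would recall that $\bar{s}_1, \bar{s}_2$ generate $I_{C_U}/I^2_{C_U}$ as a free $\Oo_{X_U}$-module (condition (3) of a minimal ordered generating set), so on $X$ the images $\bar{s}_1, \bar{s}_2$ are precisely the two generators of the ideal of the smooth curve $C_U$ away from the critical locus. The remaining generators $s_3, \ldots, s_k$ are redundant on $X$ in the sense that $\bar{s}_j \in (\bar{s}_1, \bar{s}_2)$ locally. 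Since $f$ is the single defining equation of the hypersurface $X$ and $f \in I_{C/W}$ but $\bar{f} = 0$, the generator of $I_{C/W}$ that $f$ represents is one that becomes zero on $X$; by the minimality/ordering conventions this is packaged as $s_3$ (the first generator with $\bar{s}_3 = 0$, which by the remark following the definition of minimal ordered generating set is exactly the defining equation of $X$). Thus in the expansion the term genuinely involving the new direction $s_3$ appears linearly, and I would collect all the purely $(s_1,s_2)$-terms into $p_3(s_1,s_2)$ and the coefficient of the $s_3$-contribution into the unit-like factor $\delta_{s_3}$.

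The main obstacle I expect is bookkeeping the difference between the local statement over $U$ (where $C_U$ is smooth and $I_{C_U}/I^2_{C_U}$ is free of rank $2$, so the $s_1^i s_2^j$ form a clean basis) and the global statement on the germ $o \in W$, which is exactly the gap the Proposition was designed to bridge via multiplication by a power of $\delta_0$. I would need to check that after clearing denominators the coefficient $\delta_{s_3}$ remains outside $I_{C/W}$, so that it is a genuine ``unit-like'' factor rather than something vanishing along $C$; this follows because each $a_{ij} \in R - I_{C/W}$ by the Proposition, and $R - I_{C/W}$ is multiplicatively closed. The remaining routine verification is that separating the $s_3$-term from the $p_3(s_1,s_2)$-terms is consistent with the weight ordering $v_{\mathcal{B}}(s_i) \le v_{\mathcal{B}}(s_{i+1})$ and with condition (2), that no $s_i$ lies in the subalgebra generated by the others; I would use condition (2) precisely to rule out that $s_3$ could be swallowed into a polynomial in $s_1, s_2$ alone, which is what forces its linear appearance and yields the stated normal form $f = s_3 \delta_{s_3} - p_3(s_1, s_2)$.
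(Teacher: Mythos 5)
Your proposal applies the expansion Proposition to the wrong element, and this derails the argument. Applying it to $u=f$ gives $f\cdot\delta_f\equiv p_f(s_1,s_2)\ (\mathrm{mod}\ I_X\cdot R)$; since $f\in I_X$ the left-hand side is already $0$ modulo $I_X$, so the congruence only says $p_f(s_1,s_2)\in I_X$. But because $\bar s_1|_U,\bar s_2|_U$ freely generate $I_{C_U}/I_{C_U}^2$, no nonzero normal-form combination $\sum a_{ij}s_1^is_2^j$ with $a_{ij}\notin I_{C/W}$ can vanish on $X$ (its lowest-weight part would give a nontrivial relation in $I_{C_U}^m/I_{C_U}^{m+1}$). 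Hence $p_f=0$, your ``reorganized equality'' $f\cdot\delta'_f=p(s_1,s_2)$ collapses to $0=0$, and you have learned nothing. Moreover, your ``key step'' asks to isolate the term of the expansion that is linear in the new generator $s_3$, but the output $p_u(s_1,s_2)$ of the Proposition is by construction a polynomial in $s_1,s_2$ only; there is no $s_3$ term in it to isolate, and condition (2) of a minimal ordered generating set cannot conjure one. The correct move, which is the paper's, is to apply the Proposition to $u=s_3$: this yields $s_3\delta_{s_3}-p_3(s_1,s_2)\in I_X=(f)$, and $s_3$ appears linearly for the trivial reason that it sits on the left of that congruence.

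Two further problems. First, you identify $s_3$ as ``the first generator with $\bar s_3=0$, which is exactly the defining equation of $X$''; that is the opposite of the intended situation. In the corollary $\bar s_3\neq0$ (in the later normalization $s_1=z$, $s_2=u$, $s_3=y$), and if $\bar s_3$ were $0$ then $s_3$ would be an associate of $f$ and the asserted formula would degenerate to $f=f\delta_{s_3}-p_3$. Second, even after producing the element $s_3\delta_{s_3}-p_3(s_1,s_2)\in(f)$, one must still argue that it is a \emph{unit} multiple of $f$, so that it may be taken as the defining equation. The paper does this by combining $v_{\mathcal{B}}(f)\le v_{\mathcal{B}}(u)$ for all $u\in I_X$ with the ordering $v_{\mathcal{B}}(s_3)\le v_{\mathcal{B}}(s_i)$ and the fact, noted above, that no combination of $s_1,s_2$ alone lies in $I_X$; your proposal never addresses this step.
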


\begin{proof}
    We know that $f\in \langle s_1,...,s_k \rangle$. Since $v_{\mathcal{B}}$ is a pre-valuation, $v_{\mathcal{B}}(f)\leq v_{\mathcal{B}}(u)$ for any $u\in I_X$. Since $\bar{s}_1|_U$ and $\bar{s}_2|_U$ form a free generator of $I_{C_U}/I^2_{C_U}$, no combination of $s_1$ and $s_2$ vanishes on $X$. Therefore, $f$ is a combination of $s_1$, ..., $s_k$ and at least one $s_i$ appears in the combination for some $i>3$. An element with minimal $v_{\mathcal{B}}$-value is  $s_3 \delta_{s_3} - p_3(s_1, s_2)$.
\end{proof}

\section{tilting algorithm and factoring sequence}
In this section, we are going to develop an explicit algorithm to realize a divisorial extraction as a weighted blowup. This algorithm will allow us to examine examine equations of threefold $X$ case by case and then leads to a classification, which will be worked out in the next section.

We will mainly consider divisorial contraction $\phi: Y \to X \ni o$, where $o \in X$ is a germ of Gorenstein terminal singularity.  
Even though we expect that the construction holds more generally, we will assume that the contracted curve $C$ is an irreducible curves in the first part of this section. Then we will need to assume furthermore that the curve $C$ is a smooth curve in order to have an easy and explicit algorithm.  We will leave the general case to our subsequent work. 

Suppose now that $C$ is an irreducible lci curves. Thus we may assume that $k=3$ and $I_{C/W}=\langle s_1, s_2, s_3 \rangle$ is a prime ideal. 

We would like to introduce an algorithm that leads to an explicit classification, which we call {\it the tilting algorithm}. 
Given $X \subset W \cong \bA^4$ defined by $f$. Let $R$ be the completion of $\mathcal{O}_W$ with respect to $\mathfrak{m}$. We set $R_i:=R[y_1,\ldots, y_i]$ for any integer $i>0$.

\noindent
{\bf Step 0.}
We first consider $v_3$ on $W_3 \cong \mathbb{A}^7_{\bx, y_1,y_2,y_3}$ with \[v_3(y_i) = \left\{ \begin{array}{ll} \mu_E(\bar{s}_i), &  \text{ if }  \bar{s}_i \ne 0; \\ 1, &   \text{ if }  \bar{s}_i= 0\end{array}\right.\] for $i=1,2,3$.

Let $\pi_3: R_3 \to R$  be the projection so that $\pi(y_i)=s_i$.  We pick once and for all  
$\sigma^3 \in \pi_3^{-1}(f) \subset R_3$ so that 
\[v_3(\sigma^3)=\max \{ v_3(\sigma)| \sigma \ne 0 \in \pi_3^{-1}(f)\}. \]
Note that $\sigma^3=h_3y_3-g_3$ for some $g_3 \in R_2,h_3 \in R_3$.

\noindent
{\bf Step 1.} Suppose that we have defined an embedding of $X$ into $W_i \cong \mathbb{A}^{4+i}_{\bx, y_1,\ldots,y_{i}}$ 
 defined by 
 \[ \langle y_1-s_1,y_2-s_2, y_3-s_3, y_4-\kappa_4, \ldots, y_{i}-\kappa_{i}, \sigma^{i} \rangle \]  and a weight $v_i$ on $R_i$. Let $\pi_i: R_i \to R$  be the projection so that $\pi_i(y_i)=s_i$ for $i \le 3$ and $\pi_i(y_{i})=\kappa_{i}$ for $i \ge 4$. 
 
 In which,
 \begin{itemize}
     \item $\sigma^{i} = h_{i}y_{i} + g_{i} \in R_i$ for some $g_i \in R_{i-1}, h_i \in R_i$ is a representative of $\pi_i^{-1}(f)$ with maximal $v_i$-weight;

     \item $\kappa_{i}=p_{i} y_{i-1} + q_{i} \in R_{i-1}$ for some   $q_i \in R_{i-2}$, $p_{i} \in R_{i-1}$ is $v_{i-1}$ homogeneous;

 \end{itemize}

\noindent
{\bf Notation.}
 In what follows, we will frequently write decomposition $u=u_{v_i} + u_>$ so that  $u_{v_i}$ denotes the homogeneous part of $u$ with respect to the weight $v_i$ on $R_i$.  

 We also use the notation $u_{v_i=m}$  to denote the homogeneous part of $u$ with weight $m$ with respect to weight $v_i$, or simply $u_m$ if no confusion is likely. 

Given $u \in R_i$, we may write it as $\sum \alpha_I y^I$ with $\alpha_I \in R$. Then $\gcd(u)$ denotes $gcd\{\alpha_I\}_{\alpha_I \ne 0}$, whcih is an element in $R$. 


 \noindent
{\bf Step 2.} Let $\theta_{i+1}:=\gcd(\sigma^i_{v_i})$. If $\theta_{i+1} \ne 1$ then we consider the following decomposition.  $h_{i,v_i}=\theta_{i+1}p_{i+1}$ (resp. $g_{i,v_i}=\theta_{i+1}q_{i+1}$) for some $p_{i+1} \in R_i$ 
(resp. some $q_{i+1} \in R_{i-1}$). Furthermore, we have 

\[ \begin{array}{l} 
h_i=h_{i,v_i}+h_{i, >} = \theta_{i+1}p_{i+1}+h_{i, >},  \\
g_i=g_{i,v_i}+g_{i, >} =\theta_{i+1}q_{i+1}+g_{i, >}, 
\end{array}
\] 

We set 
\[ \left\{ \begin{array}{l} \kappa_{i+1}:= p_{i+1} y_{i}+q_{i+1},\\
\rho_{i+1}:=  h_{i, >}y_{i}+g_{i, >}. \end{array}\right. \]

Note that
\[ \sigma^i= h_iy_{i}  +g_i = \theta_{i+1}\kappa_{i+1}  +\rho_{i+1}\]
so that \[m_i:=v_i(\sigma^i)=v_i(\theta_{i+1}\kappa_{i+1})= v_i(\kappa_{i+1}), \text{ and } v_i(\rho_{i+1}) > m_i.\]

We consider $W_{i+1}:= W_i \times \bA^1_{y_{i+4}}$ and an embedding $W_i \hookrightarrow W_{i+1}$ as the section defined by $y_{i+1}-\kappa_{i+1}$.
Introduce a valuation $v_{i+1}$ on $R_{i+1}$ with 
\[ \left\{ \begin{array}{ll} v_{i+1}(y_{i+1}):= v_i(\rho_{i+1}) \\
v_{i+1}(u):=v_i(u) \textrm{ for } u \in R_i
\end{array}\right.\]

In other words, we define $v_{i+1}$ as a tilting of $v_i$ by $(\kappa_{i+1}, v_i(\rho_{i+1}))$, which is a non-trivial tilting. 
We fix $\sigma^{i+1} \in R_{i+1}$ so that it has maximal $v_{i+1}$ weight among $\pi_{i+1}^{-1}(f)$. We may write it as 
\[ \sigma^{i+1}:= h_{i+1}y_{i+1}  +g_{i+1},\] for some $h_{i+1} \in R_{i+1}$ and $g_{i+1} \in R_i$.

 \noindent
{\bf Ending Point.} If $\theta_{l+1}=1$ for some $l$, then we stop.


Note that the above process does not guarantee that we will reach the ending point. 
By considering their associated p-filtrations, we can  verify that it will be terminated after finite steps. Hence it reaches an ending point and coincides with the result of the approximation of valuation in the previous section. 
\begin{prop}
The above tilting algorithm terminates in finitely many steps.  
\end{prop}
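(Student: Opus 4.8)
The plan is to translate the algorithm into a statement about an ascending chain of $v$-filtrations on $\Oo_X$, and then to invoke the finite-generation input of Proposition \ref{fg} together with the approximation mechanism of Theorem \ref{approx}.

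First I would package each stage of the algorithm as a pre-valuation on $\Oo_X$. Let $\bar{\pi}_i : R_i \to \Oo_X$ denote the composition of $\pi_i$ with the quotient $R \to \Oo_X$, put $\mu_i := v_i|_X$ in the sense of Lemma \ref{restriction}, and set $\fa^{(i)} := \Phi_X(\mu_i)$. Since $v_{i+1}$ is exactly the tilting of $v_i$ by $(\kappa_{i+1}, v_i(\rho_{i+1}))$, and since $\bar{\pi}_{i+1}$ factors as $R_{i+1} \to R_i \to \Oo_X$ with the first arrow sending $y_{i+1} \mapsto \kappa_{i+1}$, functoriality of restriction (the sup-over-fibers formula of Lemma \ref{restriction} composes) gives $\mu_{i+1} = (v_{i+1}|_{R_i})|_X \succeq v_i|_X = \mu_i$ by Lemma \ref{tilting}(1). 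Thus the $\fa^{(i)}$ form an ascending chain. I would then record two boundary conditions: the chain is bounded above by $\fb := \Phi_X(\mu_E)$, which is the inequality already proved inside Theorem \ref{wbu} (as $v_i(y_j)$ never exceeds the $\mu_E$-value of the corresponding function on $X$, one has $\mu_i(\bar{u}) = \sup\{v_i(y) : \bar{\pi}_i(y) = \bar{u}\} \le \mu_E(\bar{u})$); and, being sandwiched as $\fa^{(0)} \preceq \fa^{(i)} \preceq \fb$ with $\fa^{(0)}_1 = \fb_1 = I_{C/X}$, each $\fa^{(i)}$ has $\fa^{(i)}_1 = I_{C/X}$.

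The key step is to show that whenever $\theta_{i+1} \ne 1$ the chain increases strictly, i.e. $\fa^{(i)} \subsetneq \fa^{(i+1)}$. Here the step is a genuine tilting: by construction $v_i(\rho_{i+1}) > v_i(\kappa_{i+1}) = m_i$, so $\bar{\kappa}_{i+1} \in \Oo_X$ admits the lift $y_{i+1}$ of strictly larger weight, whence $\mu_{i+1}(\bar{\kappa}_{i+1}) \ge v_{i+1}(y_{i+1}) = v_i(\rho_{i+1}) > m_i$. The condition $\theta_{i+1} = \gcd(\sigma^i_{v_i}) \ne 1$ should, via Lemma \ref{restrict} (reducibility/non-primality of the homogeneous part $\sigma^i_{v_i}$) and the maximality of $\sigma^i$ in its fiber, force $\mu_i(\bar{\kappa}_{i+1}) = m_i$; then $\bar{\kappa}_{i+1}$ witnesses $\fa^{(i)} \subsetneq \fa^{(i+1)}$ in the relevant degree.

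Finally I would conclude by finiteness. By Proposition \ref{fg} the graded ring $R_X = \bigoplus_n \fb_n/\fb_{n+1}$ is a finitely generated $\Oo_X$-algebra, so there is an $N$ beyond which every $\fb_n$ is a sum of products of pieces of lower degree. Combined with the Noetherian property of $\Oo_X$, the truncated data $(\fa^{(i)}_2, \ldots, \fa^{(i)}_N)$ can strictly increase only finitely often; once it stabilizes it must equal $(\fb_2,\ldots,\fb_N)$, for otherwise a further nontrivial tilting would grow a piece of degree $\le N$, contradicting stabilization. The product relation $\sum_{p+q=n} \fa^{(i)}_p \fa^{(i)}_q \subseteq \fa^{(i)}_n \subseteq \fb_n = \sum_{p+q=n} \fb_p \fb_q$ then propagates equality to all higher degrees exactly as in the proof of Theorem \ref{approx}, forcing $\fa^{(i)} = \fb$. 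By the third paragraph no further strict step is then possible, so $\theta_{l+1} = 1$ for some $l$ and the algorithm stops. I expect the main obstacle to be the strictness claim of the third paragraph: relating the purely formal condition $\theta_{i+1} \ne 1$ to an actual increase of the \emph{restricted} filtration on $\Oo_X$, rather than merely on the ambient $R_i$ — this is precisely where the dictionary between irreducibility of the exceptional set and $v_\bw|_X$ being a genuine valuation (Proposition \ref{prime}) must be used.
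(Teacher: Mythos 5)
Your proposal is correct and follows essentially the same route as the paper: the paper's own proof simply asserts that the restricted filtrations form a strictly ascending chain $\fa^{3}\precneqq\fa^{4}\precneqq\cdots$ bounded above by $\fb=\Phi_X(\mu_E)$ with $\fa^3_1=\fb_1$, and then invokes Theorem \ref{approx} (whose hypotheses are supplied by Proposition \ref{fg}). Your version fills in the details the paper leaves implicit — in particular the strictness of each step via the maximality of $\sigma^i$ in its fiber, which the paper only verifies later inside Proposition \ref{reg_seq} — but the underlying argument is the same.
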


\begin{proof}
Let $\fb$ be the filtration associated to the valuation $\mu_E$ and let $\fa^i$ be the filtration associated to $v_i$ for $i \ge 3$. 
We claim that $\fb \succeq \fa^{i} $ for all $i$, $\fa^{i+1} \succneqq \fa^{i}$ for all $i$, and $\fb_1=\fa^3_1$. Then by Theorem \ref{approx}, this ascending chain of filtrations must terminate in finitely many steps. 
\end{proof}

\begin{prop} \label{reg_seq} Suppose that $\langle s_1, s_2, s_3 \rangle$ is a prime ideal so that $R/\langle s_1, s_2, s_3 \rangle$ is a UFD. 
 Then the ideal 
\[ \langle s_1, s_2, s_3, \kappa_4,\ldots, \kappa_l, \sigma^l_{v_l} \rangle \lhd R_l \] is prime.
\end{prop}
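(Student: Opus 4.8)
The plan is to prove that $R_l/J$ is an integral domain, where $J:=\langle s_1,s_2,s_3,\kappa_4,\ldots,\kappa_l,\sigma^l_{v_l}\rangle$. First I would record that $J$ is $v_l$-homogeneous: since $v_l$ assigns weight $0$ to every element of $R$ and strictly positive weight to each $y_i$, each of $s_1,s_2,s_3$ (weight $0$), each $\kappa_j$, and $\sigma^l_{v_l}$ is $v_l$-homogeneous, so it suffices to check that the graded quotient is a domain. Reducing modulo the prime $\langle s_1,s_2,s_3\rangle$ and writing $\bar R:=R/\langle s_1,s_2,s_3\rangle$, I obtain $R_l/\langle s_1,s_2,s_3\rangle\cong \bar R[y_1,\ldots,y_l]$, which is a UFD because $\bar R$ is a UFD by hypothesis (Gauss). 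The task is thereby reduced to showing that $\langle \bar\kappa_4,\ldots,\bar\kappa_l,\bar\sigma^l_{v_l}\rangle$ is prime in the UFD $\bar R[y_1,\ldots,y_l]$.

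The second step is to exploit the triangular shape manufactured by the tilting algorithm. By construction $\kappa_j=p_jy_{j-1}+q_j$ with $q_j$ free of $y_{j-1}$, so $\bar\kappa_j$ introduces the variable $y_{j-1}$ at leading order, while $\sigma^l_{v_l}=h_{l,v_l}y_l+g_{l,v_l}$ introduces $y_l$. I would then add the generators one at a time, setting $A_0:=\bar R[y_1,\ldots,y_l]$, then $A_m:=A_{m-1}/\langle\bar\kappa_{m+3}\rangle$ for $1\le m\le l-3$, and finally $A_{l-2}:=A_{l-3}/\langle\bar\sigma^l_{v_l}\rangle=R_l/J$. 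The goal is to prove by induction that each $A_m$ is a domain by showing that the newly added generator is a prime element of the preceding domain; as a by-product this exhibits the generators as a regular sequence, which is the content behind the name of the proposition. Viewing $A_{m-1}$ as a polynomial ring in the freshly introduced variable over the subring generated by the lower variables, primality of the new generator is, by Gauss's Lemma over the relevant UFD, equivalent to its being primitive (unit content) with irreducible primitive part.

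The mechanism forcing primitivity is the $\gcd$-bookkeeping built into the algorithm. Since $\sigma^{j-1}_{v_{j-1}}=\theta_j\kappa_j$ and $\theta_j=\gcd(\sigma^{j-1}_{v_{j-1}})$, the coefficients of $\kappa_j$ have $\gcd$ equal to $1$ in $R$; likewise the terminal condition $\theta_{l+1}=\gcd(\sigma^l_{v_l})=1$ says that $\sigma^l_{v_l}$ is primitive over $R$. At the final step this is exactly what lets me conclude that $\bar\sigma^l_{v_l}$ is a prime element of the domain $A_{l-3}$, so that $R_l/J$ is a domain and $J$ is prime; the earlier steps are handled by the same primitivity input for the $\kappa_j$.

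The hard part will be transferring nonvanishing, primitivity, and irreducibility across the reduction modulo $\langle s_1,s_2,s_3\rangle$. Content $1$ over $R$ does not by itself give content $1$ over $\bar R$: restricting the coefficient functions to the curve $C$ could create a common factor, or even make a reduced generator $\bar\kappa_j$ vanish if all its coefficients happen to lie in $\langle s_1,s_2,s_3\rangle$, and \emph{a priori} the locus where the leading coefficients $\bar p_j$ vanish could contribute spurious components to $V(J)$. Ruling this out is the genuine obstacle, and it is where the irreducibility of $C$ (so that $\bar R$ is a domain with no zero divisors to split the content) and the precise terminal condition $\theta_{l+1}=1$ must be used together: I would argue that the algorithm stops only once the leading forms, restricted to $C$, are nonzero and share no common factor, so that the primitive parts remain irreducible after reduction and no extra component appears. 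Once this verification is in place, the Gauss-lemma induction above closes the argument.
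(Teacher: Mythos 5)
Your skeleton --- reduce modulo the prime $\langle s_1,s_2,s_3\rangle$ to land in the UFD $\bar R[y_1,\ldots,y_l]$, then adjoin $\bar\kappa_4,\ldots,\bar\kappa_l,\bar\sigma^l_{v_l}$ one at a time, each linear in a fresh variable, and invoke Gauss --- is the same as the paper's. The problem is that the step you yourself flag as ``the genuine obstacle'' is the entire content of the proposition, and the mechanism you propose for filling it is not the one that works. The stopping condition $\theta_{l+1}=\gcd(\sigma^l_{v_l})=1$ and the unit content of the $\kappa_j$ are statements about $\gcd$'s over $R$, and, as you correctly observe, such statements do not survive restriction to $C$; no amount of massaging that bookkeeping will yield the non-vanishing and non-divisibility you need over $\bar R=R/\langle s_1,s_2,s_3\rangle$. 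So as written the proposal does not close.

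The missing idea in the paper is the variational characterization of the lifts: $\sigma^j$ is chosen to have \emph{maximal} $v_j$-weight among all elements of $\pi_j^{-1}(f)$, and the paper checks that $\kappa_{j+1}$ inherits the analogous maximality (otherwise $\tilde\sigma=\theta_{j+1}\tilde\kappa_{j+1}+\rho_{j+1}$ would beat $\sigma^j$). This gives a substitution trick: if, say, $\sigma^3_{v_3}$ lay in $\langle s_1,s_2,s_3\rangle R_3$, one could write each coefficient as $\beta_{I_1}s_1+\beta_{I_2}s_2+\beta_{I_3}s_3$ and replace each $s_k$ by $y_k$, producing another preimage of $f$ of strictly larger $v_3$-weight (since $v_3(y_k)>0=v_3(s_k)$), contradicting maximality. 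The same argument shows that the leading coefficient $h_{3,v_3}$, the factor $\theta_{j+1}$, and each $\kappa_{j+1}$ do not lie in $I_{C/W}$ (respectively, in the ideal of the previously adjoined generators); this is exactly the non-degeneracy over $\bar R$ that your Gauss-lemma induction requires. With that input supplied, your induction and the paper's proof coincide; without it, the key step is unproved.
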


\begin{proof}
We prove by induction on $l$. Suppose first that $l=3$. 
We claim that $\sigma^3_{v_3} \not \in \langle s_1, s_2, s_3\rangle$. Suppose on the contrary that $\sigma^3_{v_3}  \in \langle s_1, s_2, s_3\rangle$. We may write
\[ \sigma^3_{v_3}=\sum_{v_3(y^I)=m_3} \alpha_I y^I =\sum_{v_3(y^I)=m_3} (\beta_{I_1}s_1+\beta_{I_2}s_2+\beta_{I_3}s_3) y^I,  \] 
where $m_3:=v_3(\sigma^3)$. Consider 
\[\tilde{\sigma}^3_{v_3}:= \sum_{v_3(y^I)=m_3} (\beta_{I_1}y_1+\beta_{I_2}y_2+\beta_{I_3}y_3) y^I, \textrm{ and } \tilde{\sigma}^3:=\tilde{\sigma}^3_{v_3} + \sigma^3_>.\]
Then one finds that 
$\pi_3(\tilde{\sigma}^3))= \pi_3 ( \sigma^3)$ but $v_3(\tilde{\sigma}^3) = v_3(\tilde{\sigma}^3_{v_3})> v_3 ( \sigma^3)$, which a contraction to our choice of $\sigma^3$ with maximal $v_3$ weight.

Indeed, we can write $\sigma^3_{v_3}=h_{h,v_3} y_3 + g_{3,v_3}$. One sees that  $v_3(\sigma^3)=v_3(y_3)$ and  $h_{3,v_3}  \in R - \langle s_1, s_2, s_3 \rangle$. 
Consider $\bar{R}_2:=R_2/\langle s_1, s_2, s_3 \rangle$, which is a UFD. We then consider  the natural surjection  $\tau:  R_2[y_3] \to \bar{R}_2[y_3]$. The image of $\sigma^3_{v_3}$ in $\bar{R}_2[y_3]$, denoted $\bar{\sigma}$, is non-zero of degree $1$ in $\bar{R}_2[y_3]$. One sees that $\bar{\sigma}$ is irreducible in $\bar{R}_2[y_3]$. It follows that \[ R_3/\langle s_1, s_2, s_3, \sigma^3_{v_3} \rangle \cong \bar{R}_2[y_3]/\bar{\sigma}\] is an integral domain. 

Suppose now that $l \ge 4$. First recall that $\sigma^{j}$ was chosen to have maximal $v_j$ weight and $\theta_{j+1} \kappa_{j+1}$ is the $v_j$-homogeneous part of $\sigma^j$. We claim that $\kappa_{j+1}$ has maximal $v_{j}$ weight as well. To see this, suppose that there is $\tilde{\kappa}_{j+1} \in \pi_j^{-1}(\pi_j (\kappa_{j+1}))$ with $v_j(\tilde{\kappa}_{j+1}) >v_j({\kappa}_{j+1}) $, then consider $\tilde{\sigma}:= \theta_{j+1} \tilde{\kappa}_{j+1} + \rho_{j+1}$. One sees that $v_j(\tilde{\sigma}) > v_j(\sigma^j)$, which is a contradiction.
Therefore, similar argument show that $\kappa_{j+1} \not \in \langle s_1, s_2, s_3, \ldots, \kappa_l \rangle$. Moreover, $\theta_{j+1}  \in R-I_{C/W}$. The similar argument as above shows that $R_j/\langle s_1, s_2, s_3, \ldots, \kappa_{j+1} \rangle$ is an integral domain. 

Lastly,  $\sigma^l$ has maximal $v_l$ weight.  
The above argument works inductively. Hence the statement follows. 
\end{proof}

\begin{prop} \label{vb} Suppose that $\langle s_1, s_2, s_3\rangle$ is prime so that $R/\langle s_1, s_2, s_3\rangle$ is a UFD. 
The set of generators 
\[ \left\{ y_1-s_1, y_2-s_2, y_3-s_3, y_4-\kappa_4,\ldots, y_{l}-\kappa_l, \sigma^l \right\}\] forms a $v_l$-basis of $I_{X/W_l}$.
\end{prop}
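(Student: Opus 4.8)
The plan is to prove the $v_l$-basis property by identifying the $v_l$-leading forms of the given generators, recognizing (via Proposition \ref{reg_seq}) that these leading forms constitute a regular sequence in the associated graded ring, and then invoking the classical fact that a generating set whose initial forms form a regular sequence is automatically a standard basis. Throughout write $I:=I_{X/W_l}$ and recall that the $v_l$-basis condition asks precisely that the leading forms of the generators generate $\langle I_{v_l}\rangle$.

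First I would compute the initial forms. Since $v_l$ vanishes on $R=\hat{\Oo}_W$ while $v_l(y_i)=\mu_E(\bar s_i)\ge 1>0=v_l(s_i)$ for $i=1,2,3$, the leading form of $y_i-s_i$ is $-s_i$. For $4\le i\le l$ each $\kappa_i$ is $v_l$-homogeneous (being the factor of the $v_{i-1}$-homogeneous form $\sigma^{i-1}_{v_{i-1}}$ by the weight-zero element $\theta_i=\gcd(\cdot)\in R$), and by construction $v_l(\kappa_i)=m_{i-1}<v_{i-1}(\rho_i)=v_l(y_i)$, so the leading form of $y_i-\kappa_i$ is $-\kappa_i$; finally the leading form of $\sigma^l$ is $\sigma^l_{v_l}$. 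Hence, inside the weighted polynomial ring $\operatorname{gr}_{v_l}R_l\cong R[y_1,\dots,y_l]$ (the associated graded of the weight filtration, with $R$ in degree $0$), the leading forms generate
\[ J:=\langle s_1,s_2,s_3,\kappa_4,\dots,\kappa_l,\sigma^l_{v_l}\rangle. \]
As each leading form lies in $I_{v_l}$ we get $J\subseteq\langle I_{v_l}\rangle$ for free; the content is the reverse inclusion.

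Next I would show that $s_1,s_2,s_3,\kappa_4,\dots,\kappa_l,\sigma^l_{v_l}$ is a regular sequence in $\operatorname{gr}_{v_l}R_l$. The base $s_1,s_2,s_3$ generates a height-$3$ prime in the regular local ring $R$ and therefore forms a regular sequence (equivalently, $C$ is lci of codimension $3$); by flatness of the polynomial extension $R\hookrightarrow R[y_1,\dots,y_l]$ it stays a regular sequence there, with quotient the domain $\hat{\Oo}_C[y_1,\dots,y_l]$. For the remaining elements I would simply read off the induction in the proof of Proposition \ref{reg_seq}: it exhibits the quotients $R_j/\langle s_1,s_2,s_3,\kappa_4,\dots,\kappa_{j+1}\rangle$ (and the final one by $\sigma^l_{v_l}$) as a nested chain of integral domains in which each newly adjoined generator is nonzero, i.e. a nonzerodivisor on the preceding domain. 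That is exactly the statement that the whole list is a regular sequence, so $J$ is a complete intersection prime of codimension $l+1$ (consistent with $X$ being a codimension-$(l+1)$ complete intersection in $W_l$).

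Finally I would apply the standard criterion for filtered rings: if elements $g_1,\dots,g_N$ have leading forms $g_{1,v_l},\dots,g_{N,v_l}$ forming a regular sequence in $\operatorname{gr}_{v_l}R_l$, then $\langle I_{v_l}\rangle=\langle g_{1,v_l},\dots,g_{N,v_l}\rangle$, which is precisely the $v_l$-basis property. The mechanism is that every homogeneous syzygy among a regular sequence is Koszul, so in any expression $u=\sum_j h_jg_j$ a weight-lowering cancellation among the leading terms can be cleared by a Koszul relation (using $g_ig_j-g_jg_i=0$) without changing $u$; iterating, the corrections have strictly increasing $v_l$-weight and, by separatedness of the $v_l$-filtration, yield an expression realizing $u_{v_l}\in\langle g_{j,v_l}\rangle$. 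Combining this with the regular sequence of the previous step gives $\langle I_{v_l}\rangle=J$, completing the proof. The main obstacle is this last step: making the Koszul-reduction/convergence argument rigorous in the completed setting $R=\hat{\Oo}_W$ (controlling the infinite sum of successive corrections via the separated, exhaustive filtration); the regular-sequence input from the preceding step is what makes it go through. The only other point needing care is confirming in the first step that no cancellation lowers the claimed leading forms, which holds because $v_l(y_i)$ strictly exceeds the $v_l$-weight of the corresponding $s_i$ or $\kappa_i$.
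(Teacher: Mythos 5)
Your proposal is correct and follows essentially the same route as the paper: the paper's Lemma 5.4 (the skew-symmetric matrix lemma) is exactly the statement that syzygies of a regular sequence are Koszul, and its inductive construction of the matrices $Q_j$ for weights $j<M$ is precisely the iterative Koszul-clearing of leading-term cancellations that you describe, with the regular-sequence input supplied by Proposition 5.2 in both cases. The only difference is one of packaging — you cite the general standard-basis criterion where the paper carries out the weight-by-weight induction explicitly (including the extra bookkeeping for the $b\sigma^l$ term) — so there is no substantive gap.
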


We will need the following
\begin{lem}\label{skew} Let $(s_1,\ldots, s_k)$ be a regular sequence for some $k \ge 2$. Suppose that $\sum_{i=1}^k \alpha_i s_i =0 $. 
Then there exists a $k \times k$ skew-symmetric matrix $Q$ such that $ \vec{\alpha} = Q \vec{s}$, where $ ^t\vec{\alpha}=(\alpha_1, \ldots, \alpha_k)$ and $^t\vec{s} = (s_1,\ldots,s_k)$.
\end{lem}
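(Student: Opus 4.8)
The plan is to prove this by induction on $k$, using the defining property of a regular sequence at each stage. Conceptually this is the assertion that the first syzygy module of a regular sequence is generated by the Koszul (``trivial'') relations $s_i e_j - s_j e_i$; collecting the coefficients of these relations into a matrix produces precisely a skew-symmetric $Q$ with $\vec{\alpha} = Q\vec{s}$. I would present the elementary inductive argument rather than invoke the full Koszul machinery, since each step uses only that $s_i$ is a non-zero-divisor modulo its predecessors.

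For the base case $k=2$, from $\alpha_1 s_1 + \alpha_2 s_2 = 0$ I would first note that $\alpha_2 s_2 = -\alpha_1 s_1 \in (s_1)$. Since $s_2$ is a non-zero-divisor on $R/(s_1)$ (this is the regular-sequence hypothesis), it follows that $\alpha_2 = c s_1$ for some $c \in R$. Substituting back gives $s_1(\alpha_1 + c s_2)=0$, and cancelling the non-zero-divisor $s_1$ yields $\alpha_1 = -c s_2$. Thus $\vec{\alpha}=Q\vec{s}$ with the skew-symmetric matrix $Q = \begin{pmatrix} 0 & -c \\ c & 0 \end{pmatrix}$.

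For the inductive step, with $k \ge 3$ and the statement assumed for length $k-1$, I would isolate the last term of $\sum_{i=1}^k \alpha_i s_i = 0$, namely $\alpha_k s_k = -\sum_{i<k}\alpha_i s_i \in (s_1,\ldots,s_{k-1})$. Because $s_k$ is a non-zero-divisor on $R/(s_1,\ldots,s_{k-1})$, this forces $\alpha_k = \sum_{i<k} q_{ik} s_i$ for some $q_{ik}\in R$. Regrouping the original relation then gives $\sum_{i<k}(\alpha_i + q_{ik}s_k)s_i = 0$, which is a relation among $(s_1,\ldots,s_{k-1})$ — an initial segment of a regular sequence, hence itself a regular sequence. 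The induction hypothesis supplies a skew-symmetric $(k-1)\times(k-1)$ matrix $Q'$ with $\alpha_i + q_{ik}s_k = \sum_{j<k} Q'_{ij}s_j$. I assemble the full $Q$ by putting $Q'$ in the upper-left block, setting $Q_{ik}=-q_{ik}$ and $Q_{ki}=q_{ik}$ for $i<k$, and $Q_{kk}=0$; this is manifestly skew-symmetric, and one checks $(Q\vec{s})_i = (\alpha_i + q_{ik}s_k) - q_{ik}s_k = \alpha_i$ for $i<k$ and $(Q\vec{s})_k = \sum_{j<k}q_{jk}s_j = \alpha_k$.

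There is no genuine conceptual obstacle: the entire content is the regular-sequence property that each $s_i$ is a non-zero-divisor modulo its predecessors. The only point requiring care is the bookkeeping of signs, namely that the spurious term $q_{ik}s_k$ produced when solving for $\alpha_k$ is cancelled by the off-diagonal entry $Q_{ik}=-q_{ik}$, while the companion entry $Q_{ki}=q_{ik}$ reproduces $\alpha_k$ in the last row. The ``hard part'' is therefore simply confirming that these two sign conventions are compatible, so that $Q$ is simultaneously skew-symmetric and satisfies $\vec{\alpha}=Q\vec{s}$.
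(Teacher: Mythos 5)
Your proof is correct and follows essentially the same route as the paper: induction on $k$, solving for $\alpha_k$ modulo $(s_1,\ldots,s_{k-1})$ via the non-zero-divisor property, and assembling the skew-symmetric matrix from the inductive block plus the new last row and column. Your sign bookkeeping in the inductive step is in fact more carefully stated than the paper's own writeup.
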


\begin{proof}[Proof of Lemma 5.4]
We prove by induction on $k$. The case $k=2$ follows from definition directly. Suppose now that $\sum_{i=1}^k \alpha_i s_i =0 $. Then  $\alpha_k s_k$ has image $0$ in $R/\langle s_1, \ldots, s_{k-1} \rangle$ and hence $\alpha_k$ has image $0$ in the quotient. That is $\alpha_k = \sum_{i=1}^{k-1} \beta_i s_i$. Thus one has $\sum_{i=1}^{k-1} (\alpha_i + \beta_i s_k) s_i=0$. We set $ ^t\vec{\alpha'}=(\alpha_1-\beta_1 s_k, \ldots, \alpha_{k-1} -\beta_{k-1} s_k)$ and $^t\vec{s'} = (s_1,\ldots,s_{k-1})$.  By induction hypothesis, one has $ \vec{\alpha'} = Q' \vec{s'}$ for some $Q'$ which is a $(k-1) \times (k-1)$ skew-symmetric matrix. Consider a $k \times k$ skew-symmetric matrix $Q$ such that $Q_{i,j}=Q'_{i,j}$ for $1 \le i,j \le k-1$ and $Q_{i,k}=-\beta_i$ for $1 \le i \le k-1$.  One sees that $ \vec{\alpha} = Q \vec{s}$.    
\end{proof}

\begin{proof}[Proof of Proposition 5.3]
Let $J= \langle s_1, s_2, s_3, \kappa_1,\ldots, \kappa_l, \sigma^l_{v_l} \rangle$. We aim to prove that $\langle I_{X/W_l, v_l} \rangle = J$. 

By Proposition  \ref{reg_seq}, it follows that that $( s_1, s_2, s_3, \kappa_4,\ldots, \kappa_l, \sigma^l_{v_l} )$ is a regular sequence. 

We will use the following notations. We write $s_{i}:=\kappa_i$ for all $i \le 4$ and then  $^t\vec{s}=(s_1,  \ldots, s_l)$. For all $i \le l$, we set $c_i:= v(s_i) \ge 0$ and  $d_i:= v(y_j)-v(s_i) > 0$. 


Given $u \in I_{X/W_l}$, we may write is as 
\[ u=  \sum_{i=1}^l a_i (y_i-s_i) + b \sigma^l. \] 
Let $M=v_l(u)$ and $c=v_l(\sigma^l)$. 

Suppose first that $v_l(a_i(y_i-s_i)) \ge M$ for all $i$ and $v(b\sigma^l) \ge M$. Then clearly, 
\[u_M = -a_{i,M-c_i} s_i + b_{M-c} \sigma^l_c \in J. \]

Suppose next that $v_l(a_i(y_i-s_i)) < M$  for some $i$ and $v_l(b\sigma^l) \ge M$. We set
\[ a''_i= \sum_{j < M-c_i}   a_{i, j}, \quad a'_i:=a_i-a'_i,  \] and set up a decomposition of $u= u'+w$ where 
\[ w= \sum_{i=1}^l a''_i (y_i-s_i), \quad u':=u-w=\sum_{i=1}^l a'_i (y_i-s_i) + b \sigma^l. \]

Note that $u'$ has the property that $v_l(a'(y_i-s_i)) \ge M$ and hence $v(u') \ge M$. If $v(u') = M$, then $u'_M \in J$ and hence $u_M=u'_M+w_M \in J$ if and only if $w_M \in J$. If $v(u')>M$, then $u_M=w_M$. Therefore,  $u_M \in J$ is equivalent to $w_M \in J$ in either case.  

Observe now that $w$ has the following properties:
for  $j \le M$, \[ w_j = \sum_{i=1}^k -a_{i, j-c_i} s_i  + \sum_{i=1}^k a_{i,j-d_i-c_i} y_i, \] and 
$w_j=u_j=0$ for $j < M$.

\noindent
{\bf Claim.} For $j < M$, there are skew-symmetric matrices $Q_j$s so that \[ \vec{a}_j = Q_j \vec{s} - \sum_{i=1}^k Q_{j-d_i} \vec{y}_i,\]
where $^t\vec{a_j}=(a_{i,j-c_1}, \ldots, a_{k, j-c_k} ) $ and $^t\vec{y}_i=(0,\ldots, y_i, \ldots, 0)$.

\begin{proof}[Proof of the Claim]  First observe that for $j \le M$, 
\[ w_j = -^t \vec{a}_j \cdot \vec{s} + \sum_{i=1}^k {^t\vec{y}_i} \cdot  \vec{a}_{j-d_i}. \]

We prove by induction on $j$. For $m:=\min\{v(a_i (y_i-s_i)) \}$, one has $w_m = -^t \vec{a}_m \cdot \vec{s} = u_m=0$. Hence $\vec{a}_m = Q_m \vec{s}$ by Lemma \ref{skew} for some skew-symmetric $Q_m$. We set $Q_j =0$ if $j <m$, then the claim holds for $j \le m$. 

Consider now 
\[ \begin{array}{ll} w_{j+1} &= -^t \vec{a}_{j+1} \cdot \vec{s} + \sum_{i=1}^k {^t\vec{y}_i} \cdot  \vec{a}_{j+1-d_i} \\
&= -^t \vec{a}_{j+1} \cdot \vec{s} + \sum_{i=1}^k {^t\vec{y}_i} \cdot ( Q_{j+1-d_i} \vec{s} - \sum_{l=1}^k Q_{j+1-d_i-d_l} \vec{y}_l ) \\
&= -^t \vec{a}_{j+1} \cdot \vec{s} + \sum_{i=1}^k {^t\vec{y}_i} \cdot  Q_{j+1-d_i} \vec{s}   - \sum_{i=1}^k\sum_{l=1}^k {^t\vec{y}_i}Q_{j+1-d_i-d_l} \vec{y}_l  \\
&=-^t \vec{a}_{j+1} \cdot \vec{s} + \sum_{i=1}^k {^t\vec{y}_i} \cdot  Q_{j+1-d_i} \vec{s}  = (-^t \vec{a}_{j+1} + \sum_{i=1}^k {^t\vec{y}_i} \cdot  Q_{j+1-d_i}) \cdot \vec{s}. \end{array} \]
If $j+1< M$, then $w_{j+1}=0$. Thus by Lemma \ref{skew}, there is skew-symmetric $Q_{j+1}$ so that
\[ - \vec{a}_{j+1} + \sum_{i=1}^k    {^tQ_{j+1-d_i}} \vec{y}_i = -Q_{j+1} \vec{s}. \]
Hence $ \vec{a}_{j+1} = Q_{j+1} \vec{s} - \sum_{i=1}^k    Q_{j+1-d_i} \vec{y}_i$ as desired. 
\end{proof}

The above computation yields that 
\[ w_M =(-^t \vec{a}_{M} + \sum_{i=1}^k {^t\vec{y}_i} \cdot  Q_{M-d_i}) \cdot \vec{s} \in J.  \]

Finally, we consider the situation that $v(b \sigma) < M$. 

\noindent
{\bf Claim.} For $j < M$, there are skew-symmetric matrices $Q_j$s together with vectors $\vec{p}_{j-c}$ so that \[ \vec{a}_j = Q_j \vec{s} - \sum_{i=1}^k Q_{j-d_i} \vec{y}_i + \sum_{l=0}^{j-m} \sigma_{c+l} \vec{p}_{j-c-l};\]
\[ b_{j-c}=^t\vec{p}_{j-c} \cdot \vec{s} - \sum^{k}_{i=1}  {^t\vec{y}_i} \cdot \vec{p}_{j-c-d_i}.\]
where $^t\vec{a_j}=(a_{i,j-c_1}, \ldots, a_{k, j-c_k} ) $ and $^t\vec{y}_i=(0,\ldots, y_i, \ldots, 0)$.

\begin{proof}[Proof of the Claim]
Again, we prove by induction. Note that $0=w_m = -^t \vec{a}_m \cdot \vec{s}  + b_{m-c} \sigma_c^l $. As $(s_1,\ldots, s_l, \sigma_c^l)$ is a regular sequence (cf. Prop. \ref{reg_seq}), one has 
$b_{m-c}= ^t\vec{p}_{m-c} \cdot \vec{s}$ for some $\vec{p}_{m-c}$. 
We can rewrite
\[ 0=w_m=-^t \vec{a}_m \cdot \vec{s}  +  \sigma_c ^t\vec{p}_{m-c} \cdot \vec{s}. \] Hence by Lemma \ref{skew},
\[\vec{a}_m = Q_m \vec{s} + \sigma_c \vec{p}_{m-c} \] for some skew-symmetric $Q_m$. We set $Q_j=0$ and $\vec{p}_{j-c}=0$ if $j < m$.

Consider now for $m \le j < M$
\[ \begin{array}{lll} w_{j+1} &=& -^t \vec{a}_{j+1} \cdot \vec{s} + \sum_{i=1}^k {^t\vec{y}_i} \cdot  \vec{a}_{j+1-d_i} + \sigma_c b_{j-c+1} +  \sum_{l=1}^{j+1-m} \sigma_{c+l} b_{j+1-c-l}  \\
&=&-^t \vec{a}_{j+1} \cdot \vec{s} + \sum_{i=1}^k {^t\vec{y}_i} \cdot  Q_{j+1-d_i} \vec{s} + \sum_{i=1}^k \sum_{l=0}^{j-m} {^t\vec{y}_i} \sigma_{c+l} \vec{p}_{j+1-c-l-d_i} \\ 
& &+ \sigma_c b_{j-c+1} + \sum_{l=1}^{j+1-m} \sigma_{c+l} ( ^t\vec{p}_{j+1-c-l} \cdot \vec{s} - \sum_{i=1}^k {^t\vec{y}_i} \cdot \vec{p}_{j+1-c-l-d_i})\\
&=& (-^t \vec{a}_{j+1} + \sum_{i=1}^k {^t\vec{y}_i} \cdot  Q_{j+1-d_i} + \sum_{l=1}^{j+1-m} \sigma_{c+l} ^t\vec{p}_{j+1-c-l}) \cdot \vec{s} \\
& & + \sigma_c( b_{j+1-c} + \sum_i^k {^t\vec{y}_i} \cdot \vec{p}_{j+1-c-d_i}) - \sigma_{j+1+c-m} \sum_{i=1}^k {^t\vec{y}_i} \cdot \vec{p}_{m-c-d_i}. \end{array}  \]

Since $m-c-d_i < m-j$, one has $\vec{p}_{m-c-d_i}=0$ and hence $w_{j+1}$ is a combination of $\vec{s}$ and $\sigma_c$. Therefore, 
$w_{j+1}=0$ and $(s_1,\ldots, s_k, \sigma_c)$ is a regular sequence then  implies that 
\[  b_{j+1-c} + \sum_i^k {^t\vec{y}_i} \cdot \vec{p}_{j+1-c-d_i} = ^t\vec{p}_{j+1} \cdot \vec{s} \]
for some $^t\vec{p}_{j+1}$. By Lemma \ref{skew}, it follows that 
 \[ - \vec{a}_{j+1} + \sum_{i=1}^k {^tQ_{j+1-d_i}} \vec{y}_i + \sum_{l=1}^{j+1-m} \sigma_{c+l} \vec{p}_{j+1-c-l} = -Q_{j+1} \vec{s}.\]
\end{proof}

The above computation also shows that $w_M$ is generated by $s_1,\ldots, s_k, \sigma_c$. Thus  $w_M \in J$ as desired.
\end{proof}

We conclude this section by showing the following main theorem. 

\begin{thm}  \label{smooth}
Suppose that $C$ is a smooth curve.  Then the tilting algorithm describe in this section realize the given divisorial extraction $Y \to X$.     
\end{thm}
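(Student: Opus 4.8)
The plan is to run the algorithm to its ending point $l$ (the stage at which $\theta_{l+1}=1$), verify that the resulting weighted blowup has an irreducible exceptional divisor inducing exactly the valuation $\mu_E$, and then identify that weighted blowup with $\phi$. First I would extract from the smoothness of $C$ the input needed by the earlier propositions: since $C$ is smooth and $o\in C$, the ideal $\langle s_1,s_2,s_3\rangle=I_{C/W}$ is prime and $R/\langle s_1,s_2,s_3\rangle\cong\hat{\Oo}_{C,o}\cong k[[t]]$ is a one-dimensional regular local ring, hence a DVR and in particular a UFD. Thus the hypotheses of Propositions \ref{reg_seq} and \ref{vb} hold at every stage, and the termination proposition applies, so the algorithm halts after finitely many nontrivial tiltings at an ending point $l$. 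Along the chain the associated $p$-filtrations satisfy $\fa^3\prec\cdots\prec\fa^l\preceq\fb$ with $\fa^3_1=\fb_1$, where $\fb$ is the filtration attached to $\mu_E$.

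Next I would read off the geometry at the ending point. By Proposition \ref{vb} the set $\{y_1-s_1,\ldots,y_l-\kappa_l,\sigma^l\}$ is a $v_l$-basis of $I_{X/W_l}$, and by Proposition \ref{reg_seq} its ideal of leading forms $\langle s_1,s_2,s_3,\kappa_4,\ldots,\kappa_l,\sigma^l_{v_l}\rangle$ is prime. By the criterion collected at the end of Section 2, the existence of a $v_l$-basis with prime leading ideal forces the exceptional set $E'$ of the weighted blowup $\pi_{v_l}\colon\tl{X}\to X$ to be irreducible and, equivalently, the restriction $v_l|_X$ to be a genuine valuation on $\Oo_X$.

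It then remains to identify $\pi_{v_l}$ with $\phi$. Since $E'$ is irreducible and $\tl{X}$ is normal, Lemma \ref{restr} gives $\mu_{E'}=v_l|_X$. Granting the equality $v_l|_X=\mu_E$ (discussed below), the divisors $E'$ and $E$ define the same divisorial valuation over $X$. As both $\pi_{v_l}\colon\tl{X}\to X$ and $\phi\colon Y\to X$ are projective birational with relative Picard number one and a single irreducible exceptional divisor realizing this valuation, they must coincide over $X$: concretely each is the relative $\mathrm{Proj}$ of the Rees algebra $\bigoplus_{m\ge0}\fb_m$, where $\fb_m=\phi_*\Oo_Y(-mE)=\{u\in\Oo_X:\mu_E(u)\ge m\}$ is the valuation ideal. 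Hence the tilting algorithm realizes $\phi$.

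The hard part will be the equality $v_l|_X=\mu_E$ (i.e. $\fa^l=\fb$) at the ending point, as opposed to merely $v_l|_X\le\mu_E$: a valuation dominated by $\mu_E$ and agreeing with it in degree one need not equal it, so knowing that $v_l|_X$ is a valuation does not by itself suffice. The mechanism I would use is that the algorithm halts exactly when no nontrivial tilting remains, namely when $\theta_{l+1}=\gcd(\sigma^l_{v_l})=1$; conversely I would argue that whenever $\fa^l\prec\fb$ strictly, this gcd of the leading form of the maximal lift $\sigma^l$ of $f$ is a nonunit, producing a further strict tilting $\fa^{l+1}\succ\fa^l$. Combined with Theorem \ref{approx} — which says $\fb$ is the terminal value of any strictly ascending chain of tiltings of $\fa^3$ bounded above by $\fb$, the graded ring being finitely generated by Proposition \ref{fg} — this would force the halting value to be $\fb$. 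Making this last implication precise, that $\theta_{l+1}=1$ genuinely certifies $\fa^l=\fb$ rather than only primality of the leading ideal, is the crux, and it is exactly where the UFD structure supplied by the smoothness of $C$ does the essential work.
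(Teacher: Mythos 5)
Your proposal tracks the paper's own proof for most of its length: smoothness of $C$ gives that $\langle s_1,s_2,s_3\rangle$ is prime with UFD quotient, Propositions \ref{reg_seq} and \ref{vb} then apply, the leading ideal is prime, the exceptional set $E_l$ of the weighted blowup is irreducible, and $v_l|_X$ is a valuation by Proposition \ref{prime}. The divergence, and the gap, is in how you establish $v_l|_X=\mu_E$. You correctly single this out as the crux, but the mechanism you propose --- that the halting condition $\theta_{l+1}=1$ forces $\fa^l=\fb$ via Theorem \ref{approx} --- is left unproven, and it does not follow from Theorem \ref{approx} as stated. That theorem produces \emph{some} finite chain of tiltings from $\fa$ to $\fb$, built by choosing elements of $\fb_2\setminus\fa_2$ and so on; the algorithm instead tilts by the specific elements $\kappa_{i+1}$ extracted from $\gcd(\sigma^i_{v_i})$, and nothing in Section 3 guarantees that a chain which can no longer be extended by \emph{this particular} rule has already reached $\fb$. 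You would need to prove the converse implication ``$\fa^l\precneqq\fb$ implies $\theta_{l+1}\neq 1$,'' which is a genuine additional argument, not bookkeeping.

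The paper closes this step by a different, essentially geometric, observation that your proposal does not use: once $E_l$ is irreducible, $v_l|_X=\mu_{E_l}$ is a divisorial valuation centered on $C$ (Lemma \ref{restr}), and over a general point of $C$ both $Y_l\to X$ and $Y\to X$ are the ordinary blowup of the smooth curve $C$ inside the smooth locus of $X$ (away from the origin the extra coordinates and weights are invisible, since $\mu_E(\bar{s}_1)=\mu_E(\bar{s}_2)=1$ there). Two divisorial valuations with the same center which agree over the generic point of that center coincide, so $\mu_{E_l}=\mu_E$ and $Y_l\cong Y$; your Rees-algebra identification of the two contractions is then fine. Either adopt this generic comparison, or actually supply the proof that a unit gcd at the ending point certifies $\fa^l=\fb$; as written, your argument does not establish the theorem.
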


\begin{proof}
    Given a divisorial extraction $Y \to X$ with exceptional divisor $E$. Let $\mu_E$ be its associated valuation. 
    Suppose that $C$ is a smooth curve, then it follows that $\langle s_1, s_2, s_3 \rangle$ is prime and $R/\langle s_1, s_2, s_3\rangle $ is UFD. 
    
    The tilting algorithm constructs a weighted blowup in $\tilde{W}_l \to W_l$, in which $X$ embeds as a complete intersection threefold. Let $Y_l$ be the proper transform of $X$ in $\tilde{W}_l$ with exceptional set $E_l$ defined by $J=\langle s_1, s_2, s_3, \ldots \kappa_l, \sigma^l_{v_l} \rangle$. Since $J$ is prime, one sees that $E_l$ is an irreducible prime divisor. 

    By Prop \ref{prime}, $v_l|_X$ is a valuation. Observe that $v_l|_X$ and $\mu_E$ coincide on  general points of $C \subset X$ by our construction. Hence  it follows that $Y_l \cong Y$.  
\end{proof}

\section{classifications of divisorial contractions}
The purpose of this section is to classify threefold divisorial contractions to curve. As we have seen in Section 3 that a divisorial contraction to a curve can be realized as a weighted blowup. In section 5, we demonstrated an explicit tilting algorithm. 
Thus, for any divisorial contraction $\phi: Y \to X$ which contracts a divisor $E$ to a curve $C$. Suppose that $C$ passing through a Gorenstein singular point $o \in X$. It is known that $o \in X$ is a hypersurface $cDV$ singularity and the local equation can be classified. Each form of local equation, we perform the tilting algorithm and hence we have a classification of possible weighted blowups over each form. For each weighted blowup, if the proper transform has at worst terminal singularities, then we get an honest divisorial contraction. 

To work out all explicit conditions on equations of $X$ so that $Y$ has at worst terminal singularities will be very cumbersome. Therefore, we prefer to work on more general conditions on equations of $X$ so that it is enough to determine the weighted blowups and the non-Gorenstein singularities upstairs (and many more other geometric properties)
as well.

Also, suppose that $C$ passing through a non-Gorenstein singularity $o \in X$. We take its local canonical cover $\widetilde{X} \to X=\widetilde{X}/G$. By Proposition \ref{group}, $Y \to X$ is realized as a weighted blowup compatible with $G$ action. Therefore, one can examine the $G$-compatibility of the classification of Gorenstein case. Notice that the possible choices of $G$ is very limited. For example, if $o \in X$ is of type $cD/G$, then $G$ can only be cyclic group of order $2$ or $3$. Therefore, such examinations are quite easy in each case.

{\bf Setup.} $X \subset W \cong \bA^4/G$. 
First we suppose that $G$ is trivial. By   Corollary \ref{norm}, it is given by \[
f= \delta \cdot  s_3 + p(s_1,s_2)=\delta \cdot s_3 + \sum_{m \le i+j \le M} a_{ij}s_1^i s_2^j. \] 
We assume that $C$ is a smooth curve. Hence we may assume that the Jacobian matrix $\frac{\partial(s_1,s_2,s_3)}{\partial(x,y,z,u)}$ has rank $3$ at the origin. After changing the coordinates, we may assume that $s_1=z$, $s_2=u$ and $s_3=y$. Note that we may freely replace $z,u$ by their linearly independent combinations. 

On the other hand, $f=0$ defines a terminal singularity at the origin $o \in X$. By collecting all terms of $a_{ij}z^iu^j$ involve $y$, we may rewrite $f$ into the following  normal form 
\[ f=h(x,y,z,u)y+ g(x,z,u)= f_2+f_3+f_{\ge 4},\]
where $f_2$ (resp. $f_3$, $f_{\ge 4}$) denotes the part with $\textrm{mult}_o=2$ (resp. $=3$, $\ge 4$). Note that $h \not \in I_C$ as $\delta \not \in I_C$.
It is said to be of type $cA$ (resp. $cAx$; $cD$; $cE$) if $f_2$ has rank $ \ge 2$ (resp. $f_2$ has rank $2$ with two square terms; $f_2$ has rank $1$  and $f_3$ is not a perfect cube;  $f_2$ has rank $1$  and $f_3$ is  a perfect cube). 

Now we can classify the normal form into the following cases:
\begin{enumerate}[I.]
    \item $x \in h$. In this case, $xy \in f$ and hence is of $cA$ type.
    \item $x \not \in h$, $z \in h$. In this case $yz \in f$ and hence is of $cA$ type. 
    \item $x \not \in h$, $xz \in g$. In this case it is of $cA$ type. 
    \item $xz \not \in f, zu \in g$. In this case it is of $cA$ type. 
\end{enumerate}
and if none of above  holds, i,e, not of $cA$ type, then we reach the following cases: 
\begin{enumerate}[I.]\addtocounter{enumi}{4}

    \item $x, z, u \not \in h$, $y \in h$ and $g_2 \ne 0$. This is of $cAx$ type.
    
    \item $x, z, u \not \in h$,  $y \in h$, $g_2=0$ and $g_3$ is not a perfect cube. In this case, it is of $cD$ type.

    \item $x, z, u \not \in h$,  $y \in h$, $g_2= 0$ and $g_3=z^3$. In this case, it is of $cE$ type.
    
    \item $\textrm{mult}_o h >1$,  $z^2, u^2 \in g$. This is of $cAx$ type.
    \item $\textrm{mult}_o h >1$,  $g_2=z^2$ is a perfect square, $f_3$ is not a cube.  This is of $cD$ type.
    \item $\textrm{mult}_o h >1$,  $g_2=z^2$ is a perfect square, $f_3$ is  a cube. This is of $cE$ type.
\end{enumerate}
 
We may write $g=\sum_{m \le k} g_{v=k}$, where $g_{v=k} =\sum_{i+j=k} a_{ij}(x) z^i u^j$ denotes the part of $g$ with weight $k$ (as we will always set $wt(z)=wt(u)=1$). 

In this setting, we will denote \[\gcd(g_{v=k}):=\gcd \{a_{i,j}(x)\}_{i+j=k} .\]


We now  classify divisorial contractions to a smooth curve.  

\noindent
{\bf Case I.} $xy \in f$. 
Let $m$ be the weight of $g$ and replace $x$ by $x\cdot$(unit) if necessary, we may decompose $f$ into that following form  
\[ f= xy + h_+ y +g_{v=m} + g_{v>m}, \]
with $h_+ \in \langle y,z,u \rangle $. Replacing $x+h_+$ by $x$, we may express \[f=xy+g_{v=m}+g_{v>m}.\]

Suppose first that $\gcd(g_{v=m})=1$. 
We have $z^iu^j \in f$ for some $i+j=m$. Then tilting algorithm stopped at the first step.  
Then the weighted blowup with weights $(0,m,1,1)$ gives a divisorial contraction. 
More explicitly, computation shows that $Y$ has a unique non-Gorenstein point on the $U_y$-chart, which is a cyclic quotient singularity of type $\frac{1}{m}(-1,1,1)$. The pre-image of $o$ consists of $l$ components $\Gamma_j$ corresponding to the linear factors of $g_{v=m}= \prod_{j=1}^l (\alpha_j z +\beta_j u)^{m_j}$, where $\alpha_i, \beta_j$ are constants. If $m_j \ge 2$, there is a Gorenstein singularity on $\Gamma_j$.

Suppose that $\gcd(g_{v=m})=x^{k}$ for some integer $ k \ge 1$. 
Let $m'$ be the minimal weight such that $\gcd(g_{v=m'})=1$. Then $\sum_{m \le k < m'} g_{v=k} = xg'$ for some $g'$ and 
\[ f= x(y+g') + g_{v=m'} + g_{v>m'}=x\bar{y}+g_{v=m'}+g_{v > m'}, \]
where $\bar{y}=y+g'$. Then this is reduced to the previous case that $g_{v=m}=1$ by replacing $m, y, g_{v=m}, g_{v>m}$ by $m', \bar{y}, g_{v=m'}, g_{v > m'}$ respectively.

Suppose that the group $G$ is non-trivial and $\gcd(g_{v=m})=1$. We  set ${\bf w}=(0,m,1,1)$ and ${\bf v}=\frac{1}{r}(r-1, 1, \alpha, r)$ for some $\alpha$ relatively prime to $r$. One sees that $e_2 \in N_{\bf w}+\Z {\bf v}$. Then 
\[ mr{\bf v}=(mr-m)e_1+{\bf w}+ (m\alpha-1) e_3+(mr-1)e_4. \]
Hence
$U_{y}$ has singularity of type $\frac{1}{mr}(1,m \alpha-1,mr-1)$, which is terminal if and only if $(m \alpha-1, mr)=1$.  

\noindent
{\bf Case II.} $x \not \in h, z \in h$. Then we may write 
\[ f= x^{k }y + zy+h_+y + g_{v=m} +  g_{v>m}, \]
for some $k \ge 2$ and $h_+ \in \langle y,z,u \rangle$.

\noindent
{\bf Case II-1.}  $\gcd(g_{v=m})=1$. 

We consider weighted blowup with weights $(0,m,1,1)$. The only possible non-Gorenstein singularity appears in $U_y$-chart, which is given by
\[ (\tilde{f}= x^{k } + zy+\tilde{h}_+ + g_{v=m} +  \tilde{g}_{v>m} =0) \subset \frac{1}{m}(0,-1,1,1),\]
which is a $cA/m$ singularity with axial weight $k$. 

Suppose that the group $G$ is non-trivial and $m \ge 3$. We  set ${\bf w}=(0,m,1,1)$ and ${\bf v}=\frac{1}{r}(\alpha, 1, r-1, r)$ or ${\bf v}=\frac{1}{r}(r, 1, r-1, \alpha)$ for some $\alpha$ relatively prime to $r$. Similarly, 
$U_{y}$ has singularity defined by $\tilde{f}$ in  $4$-dimensional quotient space of type $\frac{1}{mr}(m\alpha,1, mr-m-1, mr-1)$ or   $\frac{1}{mr}(mr,1, mr-m-1, m\alpha-1)$, which is not terminal.  

Suppose that $m=2$, then we  set ${\bf w}=(0,2,1,1)$ and ${\bf v}=\frac{1}{2}(2,1,1,1)$. Then $U_y$ is embedded in $4$-dimensional quotient space of type $\frac{1}{4}(4,1,1,1)$ which is not terminal. 

\noindent
{\bf Case II-2.}  $\gcd(g_{v=m})\ne 1$. Let $x^{k'}=\gcd(x^k, g_{v=m})$ and $g_{v=m}=x^{k'}g'$.

The tilting algorithm leads to the embedding 
\[ \left\{ \begin{array}{l}  x^{k'}t + zy+h_+y +  g_{v>m}, \\
t- (x^{k-k'}y + g') \end{array} \right.\]
into a $5$-dimensional space and 
weighted blowup with weights $\bw=(0,m,1,1, m+1)$. 

The $U_y$-chart is given by 
\[ \left\{ \begin{array}{l}  x^{k'}ty + z+\tilde{h}_+ +  \tilde{g}_{v>m}, \\
ty- (x^{k-k'} + g') \end{array} \right. \subset \frac{1}{m}(0,-1, 1,1,1),\] which is a $cA/m$ singularity of axial weight $k-k'$. 

The $U_t$-chart is given by 
\[ \left\{ \begin{array}{l}  x^{k'} + zy+\tilde{h}_+ y +  \tilde{g}_{v>m}, \\
t- (x^{k-k'}y + g') \end{array} \right. \subset \frac{1}{m+1}(0,m, 1,1,-1),\] which is a $cA/(m+1)$ singularity of axial weight $k'$.

Suppose that the group $G$ is non-trivial. We  set ${\bf w}=(0,m,1,1, m+1)$ and ${\bf v}=\frac{1}{r}(\alpha, 1, r-1, r, \beta)$ (resp. ${\bf v}=\frac{1}{r}(r, 1, r-1, \alpha, \beta)$ for some $\alpha$ relatively prime to $r$ and $\beta=(k-k')\alpha+1$ (resp. $\beta=(k-k')r+1$). 
Similar computation shows that  
$U_{y}$-chart is not terminal.

\noindent
{\bf Case III.} $x \not \in h$, $xz  \in f$. Then $m=1$ and thus we set $wt(y)=1$. Moreover, $g_{v=1}=xg'$ for some $g'$ linear in $z,u$. Replace $z$ by $g'$, we may write 
\[ f= x^{k }y +h_+y + xz +  g_+, \]
for some $k \ge 2$,$h_+ \in \langle y,z,u \rangle$, and $v(g_+) >1 $ with $x \nmid g_+$.

Let $m':=wt(h_+y+ g_{v>1})$. 
We consider an embedding 
\[ \left\{ \begin{array}{l}  xt +h_+y +  g_{v>1}, \\
t- (x^{k-1}y + z) \end{array} \right.\]
into a $5$-dimensional space and 
weighted blowup with weights $\bw=(0,1,1,1, m')$. 

The $U_t$-chart is given by 
\[ \left\{ \begin{array}{l}  x +\tilde{h}_+y +  \tilde{g}_{v>1}, \\
t^{m'-1}- (x^{k-1}y + z) \end{array} \right. \subset \frac{1}{m'}(0,1, 1,1,m'-1),\] which is a  singularity of type $\frac{1}{m'}(1,1,m'-1)$.  

Suppose that the group $G$ is non-trivial. We may set ${\bf w}=(0,1,1,1, m')$ and $v=\frac{1}{r}(r-1, \alpha, 1, r, 1)$ (resp. $v=\frac{1}{r}(r-1, r,  1,  \alpha, 1)$ for some $\alpha$ relatively prime to $r$.  

One sees that 
$U_{t}$ is  terminal of type $\frac{1}{m'r}(m' \alpha-1, 1, -1)$ (resp. $\frac{1}{m'r}(-1, 1, m' \alpha-1)$  )  as long as $(m'\alpha-1, m'r)=1$.

\noindent
{\bf Case IV.} $zu  \in f$. Then $m=2$. Replacing $z,u$ by their independent linear combinations, we may assume that $g_{v=2}=zu$ and hence we can   write 
\[ f= x^{k }y +h_+y + zu +  g_{v>2}, \]
for some $k \ge 2$ and $h_+ \in \langle y,z,u \rangle$.

We consider weighted blowup with weights $\bw=(0,2,1,1)$. The only possible non-Gorenstein singularity appears in $U_y$-chart, which is given by
\[ (\tilde{f}= x^{k } + \tilde{h}_+ + zu +  \tilde{g}_{v>2} =0) \subset \frac{1}{2}(0,1,1,1),\]
which is a $cA/2$ singularity with axial weight $k$.

Suppose that the group $G$ is non-trivial. We may set ${\bf w}=(0,2,1,1)$ and ${\bf v}=\frac{1}{r}(r, \alpha, 1, r-1)$ (resp. $v=\frac{1}{r}(\alpha, r,  1, r-1)$ for some $\alpha$ relatively prime to $r$.  

In either cases, one sees that $U_y$ is embedded into a $4$-dimensional quotient space of type $\frac{1}{2r}(2r, \alpha, 2-\alpha, 2r-2-\alpha)$ if $\alpha$ is odd (resp. $\frac{1}{2r}(2 \alpha, r, 2-r, r-2)$ if $r$ is odd) which can not be terminal. The computation for the case that $\alpha$ is (resp. $r$) is even is similar. Therefore, there is no admissible non-trivial $G$.

\noindent
{\bf Case V.} $x,z,u \not \in h$, $y^2, z^2 \in f$. Thus we have $m=2$.  We can   write 
\[ f= x^{k }y +y^2+ h_+y + z^2 + \epsilon u^2 + g_{v>2}, \]
for some $k \ge 2$,  $h_+ \in \langle y,z,u \rangle^2$ and $\epsilon=0$ or $1$.

We consider weighted blowup with weights $(0,2,1,1)$. The only possible non-Gorenstein singularity appears in $U_y$-chart, which is given by
\[ (\tilde{f}= x^{k } + y^2+\tilde{h}_+ + z^2+\epsilon u^2  +  \tilde{g}_{v>2} =0) \subset \frac{1}{2}(0,1,1,1),\]
which is a $cA/2$ singularity with axial weight $k$.

Suppose that the group $G$ is non-trivial of order $2$. We may set $\bw=(0,2,1,1)$ and $v=\frac{1}{2}(1, 1, 0, 1)$.  
One sees that 
$U_{y}$ is  terminal of type $cAx/4$ in $\frac{1}{4}(2,1,3,1)$.

However, if  the group $G$ is of order $4$, then it is not admissible.

\noindent
{\bf Case VI.} $x, z, u \not \in h$,  $y \in h$, $g_2=0$ and $g_3$ is not a perfect cube. In this case, it is of $cD$ type.  
We may write $g_3= g_{3,1}+ g_{3,2} +g_{3,3} $, where $g_{3,j}$ consists of the terms of degree $3$ and of weight $j$. Then $g_3$ is not a perfect cube if one of the following holds:
\begin{itemize}
    \item $g_{3,1} \ne 0$;
    \item $g_{3,2} \ne 0$; 
    \item $g_{3,3}$ is not a perfect cube. 
\end{itemize}

\noindent
{\bf Case VI-1.} $m=3$.  Hence $g_{3,1}=g_{3,2}=0$ and $g_3$ is not a perfect cube. Then we may assume that $z^2u \in g_{3,3}$. 

In this situation, we may write 
\[f=y^2+x^ky+ h_+y+ z^2u+g_+, \]
for some $k \ge 2$,  $h_+ \in \langle y,z,u \rangle^2$,   and $g_+ \in \langle z, u \rangle^3$. 

We consider weighted blowup with weights $\bw=(0,3,1,1)$. The only possible non-Gorenstein singularity appears in $U_y$-chart, which is given by
\[ (\tilde{f}=  y^3+ x^{k } +\tilde{h}_+ + z^2u  +  \tilde{g}_{+} =0) \subset \frac{1}{3}(0,2,1,1),\]
which could be a $cD/3$ singularity with axial weight $2$ under the extra condition that $k=2$. 

Computation shows that there is no admissible non-trivial $G$. To see this, note that  for $|G|=2$ or $3$, ${N}/N_w$ has order $6$ or $9$. However, $\tilde{f}$ is not of $cA$ type in $U_y$ chart.

\noindent
{\bf Case VI-2.} $m=2$. Suppose both $g_{3,1}=g_{3,2} = 0$.  
In this situation, we may assume that $g_{3,3} \ni z^2u$ and we have $x^2|g_{v=2}$, so $\gcd(x^ky, g_{v=2})=x^{k'}$ for some $k' \ge 2$. We may write $g_{v=2}=x^{k'} g'$ for some $g'$.  Then we have  
\[f=y^2+x^ky+ h_+y+ x^{k'}g'+ z^2u+ g_{+}, \]
for some $k \ge 2$,  $h_+ \in \langle y,z,u \rangle^2$ and $g_+ \in \langle z,u \rangle^3$.

We consider an embedding 
\[ \left\{ \begin{array}{l}  y^2+x^{k'}t +h_+y + z^2u +  g_{+}, \\
t- (x^{k-k'}y + g') \end{array} \right.\]
into a $5$-dimensional space and 
weighted blowup with weights $\bw=(0,2,1,1,3)$. 

The $U_t$-chart is given by 
\[\left\{ \begin{array}{l}  y^2t+x^{k'} +\tilde{h}_+y + z^2u +  \tilde{g}_{+}, \\
t- (x^{k-k'}y + g') \end{array} \right. \subset \frac{1}{3}(0,2, 1,1,-1).\] 

Computation shows that if $k' \ge 3$ then it is not terminal. We thus assume that $k'=2$, then the singularity at $U_t$-chart is of type $cD/3$. 

Now the $U_y$-chart is given by 
\[\left\{ \begin{array}{l}  y+x^{2}t +\tilde{h}_+ + z^2u +  \tilde{g}_{+}, \\
ty- (x^{k-2} + g') \end{array} \right. \subset \frac{1}{2}(0,-1, 1,1,1).\]

Since $x^{2}t +\tilde{h}_+ + z^2u +  \tilde{g}_{+}$ has degree $\ge 3$,  it is terminal only if
\begin{itemize}
\item if $k=2$, then there is no non-Gorenstein singularity; or
\item if $k=3$, then there is a singularity of type $\frac{1}{2}(1,1,1)$; or
\item if $k \ge 4$ and $ g'_2$ has rank $2$, then there is a singularity of type $cA/2$; or
\item if $k=4$ and $ g'_2$ has rank $1$, then there is a singularity of type $cAx/2$; or

\item if $k \ge 4$ and $ g'_2$ has rank $1$, also either  $u^2+xz^2$ or $z^2+xu^2 \in g'$, then there is a singularity of type $cD/2$; or

\item if $k =4$, $ g'_2$ has rank $1$ and $g'_3=0$, then there is a singularity of type $cE/2$. 
\end{itemize}


Computation shows that there is no admissible non-trivial $G$. To see this, note that  for $|G|=2$ or $3$, ${N}/N_w$ has order $6$ or $9$. However, the defining equation is not of $cA$ type in the $U_t$-chart.


\noindent
{\bf Case VI-3.} $m=2$. Hence $g_{3,1}=0$. Suppose that $g_{3,2} \ne 0$ and $g_{v=3} \ne 0$. We may write $g_{v=2}=xg'$ with $g'_2\ne 0$. Namely, $g' $ contains some quadratic terms in $z,u$.  
In this situation, we have  $\gcd(x^ky, g_{v=2})=x$  Then we have  
\[f=y^2+x^ky+ h_+y+ xg'+ g_{+}, \]
for some $k \ge 2$,  $h_+ \in \langle y,z,u \rangle^2$, $g_+ \in \langle z,u \rangle^3$ and $g_{v=3} \ne 0$.

We consider an embedding 
\[ \left\{ \begin{array}{l}  y^2+xt +h_+y  +  g_{+}, \\
t- (x^{k-1}y + g') \end{array} \right.\]
into a $5$-dimensional space and 
weighted blowup with weights $\bw=(0,2,1,1,3)$. 

The $U_t$-chart is given by 
\[\left\{ \begin{array}{l}  y^2t+x+\tilde{h}_+y +  \tilde{g}_{+}, \\
t- (x^{k-1}y + g') \end{array} \right. \subset \frac{1}{3}(0,2, 1,1,-1),\] which has a  quotient singularity of type $\frac{1}{3}(2,1,1)$. 

The $U_y$-chart is given by 
\[\left\{ \begin{array}{l}  y+xt +\tilde{h}_+  +  \tilde{g}_{+}, \\
ty- (x^{k-1} + g') \end{array} \right. \subset \frac{1}{2}(0,-1, 1,1,1).\]

As degree of  $\tilde{h}_+  +  \tilde{g}_{+} \ge 3$, it is terminal only if
\begin{itemize}
\item if $k=2$, then there is a singularity of type $\frac{1}{2}(1,1,1)$; or
\item if $k \ge 3$ and $ g'_2$ has rank $2$, then there is a singularity of type $cA/2$; or
\item if $k=3$ and $ g'_2$ has rank $1$, then there is a singularity of type $cAx/2$; or
\item if $k \ge 3$ and $ g'_2$ has rank $1$, then there is a singularity of type $cD/2$.
\end{itemize}

Computation shows if $G$ has order $2$, then it is not admissible. 

Suppose now that $G$ has order $3$.
Then $U_t$-chart is a terminal singularity of type $\frac{1}{9}(-2,5,2)$.
Computation on $U_y$-charts shows that 
\begin{itemize}
\item if $k=2$, then there is a singularity of type $\frac{1}{6}(1,-1,1)$;
\item the other cases can not be terminal.
\end{itemize}
However, by the classification of terminal singularities, $y$ is invariant and $x$ is not invariant under the $G$-action. Thus, $f$ containing $y^2$ and $x^2y$ can not be semi-invariant. We thus conclude that a group of order $3$ is also not admissible.

\noindent
{\bf Case VI-4.} $m=2$,  $g_{3,2} \ne 0$ (hence $g_3$ is not a perfect cube) and $g_{v=3} = 0$.

In this situation, we have $\gcd(x^ky, g_{v=2})=x$. We may write $g_{v=2}=x g'$ for some $g'$ with $g'_2 \ne 0$.  Then we have  
\[f=y^2+x^ky+ h_+y+ x g' +g_{v \ge 4}, \]
for some $k \ge 2$ and $h_+ \in \langle y,z,u \rangle^2$.

The tilting algorithm leads to the embedding 
\[ \left\{ \begin{array}{l}  y^2+xt +h_+y +g_{v \ge 4}, \\
t- (x^{k-1}y + g') \end{array} \right.\]
into a $5$-dimensional space and 
weighted blowup with weights $\bw=(0,2,1,1,4)$. 

The $U_t$-chart is given by 
\[\left\{ \begin{array}{l}  y^2+x +\tilde{h}_+y +\tilde{g}_{v \ge 4}, \\
t^2- (x^{k-1}y + g') \end{array} \right. \subset \frac{1}{4}(0,2, 1,1,-1),\] 
which is a singularity of type $cAx/4$. There is no non-Gorenstein singularity in $U_y$-chart. 

Computation shows that there is no admissible non-trivial $G$. To see this, note that  for $|G|=2$ or $3$, ${N}/N_w$ has order $8$ or $12$. However, the defining equation is not of $cA$ type in the $U_t$-chart.

\noindent
{\bf Case VI-5.} $m=1$ and $g_{3}$ is not a perfect cube.  
In this situation, we have $x^2|g_{v=1}$, so $\gcd(x^ky, g_{v=1})=x^{k'}$ for some $k' \ge 2$. We may write $g_{v=1}=x^{k'} g'$ for some $g'$.  Then we have  
\[f=y^2+x^ky+ h_+y+ x^{k'}g'+ g_{+}, \]
for some $k \ge 2$,  $h_+ \in \langle y,z,u \rangle^2$ and $g_+ \in x\langle z,u \rangle^2 + \langle z,u \rangle^3$.

We consider an embedding 
\[ \left\{ \begin{array}{l}  y^2+x^{k'}t +h_+y +  g_{+}, \\
t- (x^{k-k'}y + g') \end{array} \right.\]
into a $5$-dimensional space and 
weighted blowup with weights $(0,1,1,1,2)$.

The $U_t$-chart is given by 
\[\left\{ \begin{array}{l}  y^2+x^{k'} +\tilde{h}_+   + \tilde{g}_{+}, \\
t- (x^{k-k'}y + g') \end{array} \right. \subset \frac{1}{2}(0,1, 1,1,1).\] 

Computation shows that $\tilde{h}_+$ and $\tilde{g}_+$ have degree $\ge 4$. Therefore, one has terminal singularity only under the following situations: 
\begin{itemize}
    \item if $k'=2$, then it is of type $cAx/2$;
    \item if $k' \ge 3$ and some of $\{xz^2, xzu, xu^2\} \in g_+$, then it is of type $cD/2$;
    \item if $k'=3$, none of $\{xz^2, xzu, xu^2\} \in g_+$  and some of $\{ z^3, z^2u, zu^2, u^3\}  \in g_+$, then it is of type $cE/2$.   
\end{itemize}

Computation shows that there is no admissible non-trivial $G$. To see this, note that  for $|G|=2$ or $3$, ${N}/N_w$ has order $4$ or $6$. However, in $U_t$-chart, either the defining equation is of $cAx$ type with incorrect weights in the quotient space or the defining equation is not of $cA$ type in a quotient space by a group of order $6$. Both cases can not be terminal.

\noindent
{\bf Case VII.} $x, z, u \not \in h$,  $y \in h$, $g_2=0$ and $g_3$ is a perfect cube. We have $g_{3,1}=g_{3,2}=0$ and may assume that $g_3=z^3$. In this case, it is of $cE$ type.

\noindent
{\bf Case VII-1.} $m=3$ and $g_3=z^3$. 

In this situation, we may write 
\[f=y^2+x^ky+ h_+y+ z^3+g_{v>3}, \]
for some $k \ge 2$, and  $h_+ \in \langle y,z,u \rangle^2$. 

We consider weighted blowup with weights $\bw=(0,3,1,1)$. The only possible non-Gorenstein singularity appears in $U_y$-chart, which is given by
\[ (\tilde{f}=  y^3+ x^{k } +\tilde{h}_+ + z^3  +  \tilde{g}_{v>3} =0) \subset \frac{1}{3}(0,2,1,1),\]
which could be a $cD/3$ singularity with axial weight $2$ under the extra condition that $k=2$. 

There is no admissible non-trivial $G$.  Otherwise $N/N_w$ has order $6$ but the defining equation is not of $cA$ type in the $U_y$-chart. 

\noindent
{\bf Case VII-2.} $m=2$ and $g_3=z^3$. 
In this situation, we have $x^2|g_{v=2}$.  Then we may write 
\[f=y^2+x^ky+ h_+y+ x^{k'}g'+ z^3+g_+, \]
for some $k \ge 2$,  $h_+ \in \langle y,z,u \rangle^2$,   $g_+ \in \langle z, u \rangle^3$, and  $x^{k'}=\gcd(x^ky, g_{v=2})$ so that we write $g_{v=2}=x^{k'} g'$.

We consider an embedding 
\[ \left\{ \begin{array}{l}  y^2+x^{k'}t +h_+y + z^3 +  g_{+}, \\
t- (x^{k-k'}y + g') \end{array} \right.\]
into a $5$-dimensional space and 
weighted blowup with weights $\bw=(0,2,1,1,3)$.

The $U_t$-chart is given by 
\[\left\{ \begin{array}{l}  y^2t+x^{k'} +\tilde{h}_+y + z^3 +  \tilde{g}_{+}, \\
t- (x^{k-k'}y + g') \end{array} \right. \subset \frac{1}{3}(0,2, 1,1,-1).\] 

As $k' \ge 2$ in this case, one sees that it is terminal only when  $k'=2$. In this situation, it is a singularity of type $cD/3$.

Assuming that $k'=2$, then $U_y$-chart is given by 
\[\left\{ \begin{array}{l}  y+x^{2}t +\tilde{h}_+ + z^3 +  \tilde{g}_{+}, \\
ty- (x^{k-2} + g') \end{array} \right. \subset \frac{1}{2}(0,-1, 1,1,1).\]

Since $x^{2}t +\tilde{h}_+ + z^3 +  \tilde{g}_{+}$ has degree $\ge 3$,  it is terminal only 
\begin{itemize}
\item if $k=2$, then there is no non-Gorenstein singularity; or
\item if $k=3$, then there is a singularity of type $\frac{1}{2}(1,1,1)$; or
\item if $k \ge 4$ and $ g'_2$ has rank $2$, then there is a singularity of type $cA/2$; or
\item if $k=4$ and $ g'_2$ has rank $1$, then there is a singularity of type $cAx/2$; or
\item if $k \ge 5$ and $ g'_2$ has rank $1$, also either  $u^2+xz^2$ or $z^2+xu^2 \in g'$, then there is a singularity of type $cD/2$; or

\item if $k =5$, $ g'_2$ has rank $1$ and $g'_3=0$, then there is a singularity of type $cE/2$. 
\end{itemize}


There is no admissible non-trivial $G$.  Otherwise $N/N_w$ has order $6$ but the defining equation is not of $cA$ type in the $U_t$-chart. 

\noindent
{\bf Case VII-3.} $m=1$ and $g_3=z^3$. 
In this situation, we have $x^3|g_{v=1}$.  Let $x^{k'}=\gcd(x^ky, g_{v=1})$.  So  we have  $g_{v=1}=x^{k'} g'$ for some $g'$. We may write 
\[f=y^2+x^ky+ h_+y+ x^{k'}g'+ z^3+g_+, \]
for some $k \ge 2$,  $h_+ \in \langle y,z,u \rangle^2$, and  $g_+ \in x^2\langle z, u \rangle^2 + x\langle z, u \rangle^3 + \langle z, u \rangle^4$.

We consider an embedding 
\[ \left\{ \begin{array}{l}  y^2+x^{k'}t +h_+y + z^3 +  g_{+}, \\
t- (x^{k-k'}y + g') \end{array} \right.\]
into a $5$-dimensional space and 
weighted blowup with weights $(0,1,1,1,2)$.

The $U_t$-chart is given by 
\[\left\{ \begin{array}{l}  y^2+x^{k'} +\tilde{h}_+y + z^3t +  \tilde{g}_{+}, \\
t- (x^{k-k'}y + g') \end{array} \right. \subset \frac{1}{2}(0,1, 1,1,1).\] 

Computation shows that only if $k'=2$ (and hence $k=2$), then there is a singularity of type $cAx/2$. 

If $G$ is non-trivial, then it has order $2$ and it follows that $N/N_w$ is non-cyclic of order $4$. Thus $U_t$ is not terminal and hence there is no admissible non-trivial $G$. 

\noindent
{\bf Case VIII.} $\textrm{mult}_o h >1$,  $z^2, u^2 \in g$. 

We can   write 
\[ f= x^{k }y +h_+y + z^2 +u^2 +  g_{+}, \]
for some $k \ge 2$,  $h_+ \in x\langle y,z,u \rangle + \langle y,z,u \rangle^2$ and $g_+ \in \langle x,z,u \rangle^3$.

We consider weighted blowup with weights $\bw=(0,2,1,1)$. The only possible non-Gorenstein singularity appears in $U_y$-chart, which is given by
\[ (\tilde{f}= x^{k } +\tilde{h}_+ + z^2+u^2  +  \tilde{g}_{+} =0) \subset \frac{1}{2}(0,1,1,1),\]
which is a $cA/2$ singularity with axial weight $k$.

Suppose that $G$ has order $2$. We may choose $\bv=\frac{1}{2}(1,1,1,0)$ so that $f$ is of type $cAx/2$. Then one finds that $U_y$ chart yields a singularity of type $cAx/4$ in $\frac{1}{4}(2,1,1,-1)$.  

Suppose that $G$ has order $4$, then computation shows that it is not admissible.  

\noindent
{\bf Case IX.}  $\textrm{mult}_o h >1$,  $g_2=z^2$ is a perfect square, $f_3$ is not a cube.  This is of $cD$ type.

\noindent
{\bf Case IX-1.} $m=2$ and $x^2 \in h$. 

We can   write 
\[ f= x^{2 }y +h_+y + z^2  +  g_{+}, \]
for some   $h_+ \in  x \langle y,z,u \rangle+ \langle y,z,u \rangle^2$ and $g_+ \in x\langle z,u \rangle^2+\langle z,u \rangle^3$.

We consider weighted blowup with weights $\bw=(0,2,1,1)$. The only possible non-Gorenstein singularity appears in $U_y$-chart, which is given by
\[ (\tilde{f}= x^{2 } +\tilde{h}_+ + z^2  +  \tilde{g}_{+} =0) \subset \frac{1}{2}(0,1,1,1),\]
which is a $cAx/2$ singularity. 
 
If $G$ has order $2$, then one sees that  $N/N_w$ is non-cyclic of  order $4$. If $G$ has order $3$, one has $N/N_w$ of order $6$ but the defining equation is not of $cA$ type in the $U_y$-chart. Thus, there is no admissible non-trivial $G$.

\noindent
{\bf Case IX-2.} $m=2$, $x^2 \not \in h$, and $g_{3,2} \ne 0$. 

As $z^2 \in g$, we may assume that $g_{3,2}=xu^2+ \epsilon xuz$ for $\epsilon=0$ or $1$.
We can   write 
\[ f= x^{k }y +h_+y + z^2  + xu^2+\epsilon xuz g_{+}, \]
for some $k \ge 3$, $epsilon=0,1$, $h_+ \in x\langle y,z,u \rangle+\langle y,z,u \rangle^2$ and $g_+ \in x^2\langle z,u \rangle^2+\langle z,u \rangle^3$.

We consider weighted blowup with weights $\bw=(0,2,1,1)$. The only possible non-Gorenstein singularity appears in $U_y$-chart, which is given by
\[ (\tilde{f}= x^{k } +\tilde{h}_+ + z^2 + xu^2 +  \tilde{g}_{+} =0) \subset \frac{1}{2}(0,1,1,1),\]
which is a $cD/2$ singularity.

Similarly, there is no admissible non-trivial $G$ since $N/N_w$ has order $4$ or $6$ but the defining equation is  of $cD$ type in the $U_y$-chart. 

\noindent
{\bf Case IX-3.} $m=2$, $x^2 \not \in h$,  $g_{3,2}=0$, and $\lambda xy \in f_3$ for some  $\lambda$ linear in $y,u$. 

We can   write 
\[ f= x^{k }y +\lambda xy+ h_+y + z^2  + g_{+}, \]
for some $k \ge 3$, $\lambda$ is linear in $y,z,u$,   $h_+ \in x^2 \langle y,z,u \rangle+\langle y,z,u \rangle^2$, and $g_+ \in x^2\langle z,u \rangle^2+\langle z,u \rangle^3$.

We consider weighted blowup with weights $\bw=(0,2,1,1)$. The only possible non-Gorenstein singularity appears in $U_y$-chart, which is given by
\[ (\tilde{f}= x^{k } +\lambda xy+\tilde{h}_+ + z^2  +  \tilde{g}_{+} =0) \subset \frac{1}{2}(0,1,1,1).\]
Computation shows that $\tilde{h}_+$ and $\tilde{g}_{+}$ has degree $\ge 4$. Therefore, it is  of type 
 $cD/2$. 

Similarly, there is no admissible non-trivial $G$ since $N/N_w$ has order $4$ or $6$ but the defining equation is  of $cD$ type in the $U_y$-chart.

\noindent
{\bf Case IX-4.} $m=2$, $x^2 \not \in h$,  $g_{3,2}=0$, and $f_3=g_3$ is not a cube. 

We can   write 
\[ f= x^{k }y +h_+y + z^2  +u^3+ zu^2+ g_{+}, \]
for some $k \ge 3$,   $h_+ \in x^2\langle y,z,u \rangle+\langle y,z,u \rangle^2$, and $g_+ \in x^2\langle z,u \rangle^2+x\langle z,u \rangle^3+ \langle z,u \rangle^4$.

We consider weighted blowup with weights $\bw=(0,2,1,1)$. The only possible non-Gorenstein singularity appears in $U_y$-chart, which is given by
\[ (\tilde{f}= x^{k } +\tilde{h}_+ + z^2  +  yu^3+yzu^2+\tilde{g}_{+} =0) \subset \frac{1}{2}(0,1,1,1).\]
Computation shows that $\tilde{h}_+$ and $\tilde{g}_{+}$ has degree $\ge 4$. Therefore, it is terminal only if $k=3$, 
which is a $cE/2$ singularity. 

Similarly, there is no admissible non-trivial $G$ since $N/N_w$ has order $4$ or $6$ but the defining equation is  of $cE$ type in the $U_y$-chart.

\noindent
{\bf Case IX-5.}  $m=1$, and $f_3$ is not a cube. 

In this situation, we have $x^2|g_{v=1}$. Let $x^{k'}=\gcd(x^ky, g_{v=1})$ with $k' \ge 2$. So we write  $g_{v=1}=x^{k'} g'$ for some $g'$. 
We can   write 
\[ f= x^{k }y +h_+y + z^2  + x^{k'}g'+  g_+, \]
for some $k \ge k' \ge 2$,   $h_+ \in x\langle y,z,u \rangle+\langle y,z,u \rangle^2$ and $g_+ \in x\langle z,u \rangle^2+ \langle z,u \rangle^3$.  

We consider an embedding 
\[ \left\{ \begin{array}{l}  x^{k'}t +h_+y + z^2 +  g_{+}, \\
t- (x^{k-k'}y + g') \end{array} \right.\]
into a $5$-dimensional space and 
weighted blowup with weights $\bw=(0,1,1,1,2)$. 

The $U_t$-chart  is given by
\[ \left\{ \begin{array}{l}  x^{k'} +\tilde{h}_+y + z^2 +  \tilde{g}_{+}, \\
t- (x^{k-k'}y + g') \end{array} \right. \subset \frac{1}{2}(0,1,1,1,1).\]
Computation shows that it is terminal only 
\begin{itemize}
\item if $k'=2$, then there is a $cAx/2$ singularity;
\item if $k' \ge 3$ and at least one of $\{xy^2, xyu, xu^2\}$ in $f$, then there is a $cD/2$ singularity; 

\item if $k'=3$, none of $\{xy^2, xyu, xu^2\}$ in $f$,  some of $\{y^3, y^2z, y^2u, yzu, yu^2, zu^2, u^3\}$ in $f$ and $f_3$ is not a cube, then there is a $cE/2$ singularity.
\end{itemize}

Computation shows that there is no admissible non-trivial group acting on any of the above three subcases.

\noindent
{\bf Case X.}  $\textrm{mult}_o h >1$,  $g_2=z^2$ is a perfect square, $f_3$ is  a cube. This is of $cE$ type.

Note that $x^2 \not \in h$ and also $g_{3,1}=g_{3,2}=0$ in this case.

\noindent
{\bf Case X-1.} $m=2$ and $g_{3,3} \ne 0$. We may assume that $g_{3,3}=u^3$.  

We can   write 
\[ f= x^{k }y +h_+y + z^2  + u^3+  g_{+}, \]
for some  $k \ge 3$,  $h_+ \in x^2\langle y,z,u \rangle+x\langle y,z,u \rangle^2+\langle y,z,u \rangle^3$ and $\ g_+ \in x^2\langle z, u \rangle^2 + x\langle z, u \rangle^3 +\langle z, u \rangle^4$.

We consider weighted blowup with weights $(0,2,1,1)$. The only possible non-Gorenstein singularity appears in $U_y$-chart, which is given by
\[ (\tilde{f}= x^{k } +\tilde{h}_+ + z^2  + u^3y +\tilde{g}_{+} =0) \subset \frac{1}{2}(0,1,1,1).\]

Computation shows that $\tilde{h}_+$ and $\tilde{g}_{+}$ has degree $\ge 4$. Therefore, it is terminal only if $k=3$, 
which is a $cE/2$ singularity. 

Similarly, there is no admissible non-trivial $G$ since $N/N_w$ has order $4$ but the defining equation is  of $cE$ type in the $U_y$-chart. 

\noindent
{\bf Case X-2.} $m=2$ and $g_{3,3} = 0$. We may assume that $f_3=y^3$.  

We can   write 
\[ f= x^{k }y +y^3+ h_+y + z^2  +  g_{+}, \]
for some  $k \ge 3$,  $h_+ \in x^2\langle y,z,u \rangle+x\langle y,z,u \rangle^2+\langle y,z,u \rangle^3$ and $\ g_+ \in x^2\langle z, u \rangle^2 + x\langle z, u \rangle^3 +\langle z, u \rangle^4$.

We consider weighted blowup with weights $(0,2,1,1)$. The only possible non-Gorenstein singularity appears in $U_y$-chart, which is given by
\[ (\tilde{f}= x^{k } +y^4+\tilde{h}_+ + z^2   +\tilde{g}_{+} =0) \subset \frac{1}{2}(0,1,1,1).\]

Computation shows that $\tilde{h}_+$ and $\tilde{g}_{+}$ has degree $\ge 4$. Therefore, it is terminal only if $k=3$, 
which is a $cE/2$ singularity. 

Similarly, there is no admissible non-trivial $G$ since $N/N_w$ has order $4$ but the defining equation is  of $cE$ type in the $U_y$-chart. 

\noindent
{\bf Case X-3.}  $m=1$ and $f_3=u^3$. 

In this situation, we have $x^3|g_{v=1}$. Let $x^{k'}=\gcd(x^ky, g_{v=1})$ with $k' \ge 3$.    So  we have  $g_{v=1}=x^{k'} g'$ for some $g'$. 
We can   write 
\[ f= x^{k }y +h_+y + z^2  + u^3+x^{k'}g'+  g_{+}, \]
for some $k \ge k' \ge 3$,  $h_+ \in x^2\langle y,z,u \rangle + x\langle y,z,u \rangle^2+\langle y,z,u \rangle^3$ and $\ g_+ \in x^2\langle z, u \rangle^2 + x\langle z, u \rangle^3 +\langle z, u \rangle^4$.

We consider an embedding 
\[ \left\{ \begin{array}{l}  x^{k'}t +h_+y + z^2 + u^3+ g_{+}, \\
t- (x^{k-k'}y + g') \end{array} \right.\]
into a $5$-dimensional space and 
weighted blowup with weights $\bw=(0,1,1,1,2)$. 

The $U_t$-chart  is given by
\[ \left\{ \begin{array}{l}  x^{k'} +\tilde{h}_+y + z^2 + u^3t + \tilde{g}_{+}, \\
t- (x^{k-k'} + g') \end{array} \right. \subset \frac{1}{2}(0,1,1,1,1).\]
It is terminal of type $cE/2$ if $k'=3$. 

Suppose that $G$ has order $2$. Computation shows that $U_t$ is in $\frac{1}{4}(2,1,1,-1,1)$ or $\frac{1}{4}(2,-1,1,1,1)$, which can not be terminal.

\noindent
{\bf Case X-4.}  $m=1$ and $f_3=y^3$. 

Similarly, we can   write 
\[ f= x^{k }y +h_+y+y^3+ z^2  +x^{k'}g'+  g_{+}, \]
for some $k \ge k' \ge 3$,  $h_+ \in x^2\langle y,z,u \rangle + x\langle y,z,u \rangle^2+\langle y,z,u \rangle^3$ and $\ g_+ \in x^2\langle z, u \rangle^2 + x\langle z, u \rangle^3 +\langle z, u \rangle^4$.

Same computation as in Case X-3 gives a terminal singularity of type $cE/2$ if $k'=3$. 

There is also no admissible non-trivial group $G$. 


\begin{proof}[Proof of Theorem \ref{thm2}]
The classifications of divisorial contractions are summarized in the following Tables.    
\end{proof}

\begin{proof}[Proof of Theorem \ref{gethm}] One can pick the surface defined by $u=0$ for No. $1,2,3, 23, 30, \ldots ,37$ and $Q3$; $y=0$ for No. $4, 5, 6, 7,  24, 26$ and $Q4$; $y+\mu u$ for remaining cases, where $\mu$ is a general element in $\mathbb{C}$.  
\end{proof}

{\Small

\begin{table}[h] 
    \begin{tabular}{|l|l|l|l|l|}
    \hline
    {\bf No.}&$X\hookrightarrow  W$ and description & $\begin{array}{l} I_{C/W} \textup{ and }\\\textup{weights }{\bf w}\end{array}$ & $\begin{array}{l}  \textup{Non-Gor. }\\\textup{sing. of }Y\end{array}$ &  {\bf Ref.}\\
    \hline
    {\bf 1}&$\begin{array}{l}  xy + h_+y + g(x,z,u)=0, \\
h_+ \in \langle y,z,u \rangle, m:=v_{\bf w}(g). 
     \end{array}$
     & $\begin{array}{l}I_C=\langle y,z,u \rangle \\  {\bf w}=(0,m,1,1)\end{array}$  & $\frac{1}{m}(-1,1,1)$ &{\bf I} \\

    \hline
   {\bf 2}&$\begin{array}{l}  x^{k }y + zy+h_+y + g(x,z,u)=0, \\
h_+ \in \langle y,z,u \rangle, m:=v_{\bf w}(g), \\
x \nmid g_{v=m}, \textup{ and } k \ge 2.
\end{array}$
     & $\begin{array}{l}I_C=\langle y,z,u \rangle \\ {\bf w}=(0,m,1,1)\end{array}$  & $cA/m$ & {\bf II-1} \\
      \hline

{\bf 3}&$\begin{array}{l}  \left\{ \begin{array}{l}  x^{k'}t + zy+h_+y +  g_{v>m}=0, \\
t- (x^{k-k'}y + g')=0, \end{array} \right. \\
h_+ \in \langle y,z,u \rangle, m:=v_{\bf w}(g)
\textup{ with } g_{v=m}=x^{k'}  g', \\ k \ge 2, \textup { and } k \ge k' \ge 1. \end{array}$
     & $\begin{array}{l}I_C=\langle y,z,u,t \rangle \\ {\bf w}=(0,m,1,1,m+1)\end{array}$  & $\begin{array}{l} cA/m,\\ aw=k-k' \ge 0 \\ cA/(m+1), \\ aw=k' \ge 1.\end{array}$ &{\bf II-2} \\
     \hline

{\bf 4}&$\begin{array}{l}  \left\{ \begin{array}{l}  xt +h_+y +  g_{v>1}=0, \\
t- (x^{k-1}y + z)=0, \end{array} \right. \\
h_+\in \langle y, z,u\rangle, m':=v_{\bf w}(h_+y+g_{v>1}), \textup{ and } k \ge 2.
 \end{array}$
     & $\begin{array}{l}I_C=\langle y,z,u,t \rangle \\ {\bf w}=(0,m'-1,1,1,m')\end{array}$   & $\frac{1}{m'}(1,1,m'-1)$ &{\bf III}\\
     \hline
     
   {\bf 5} &$\begin{array}{l}  x^{k }y +h_+y + zu +  g_{>}=0 \\
h_+ \in \langle y,z,u \rangle, \textup{ and } k \ge 2.\end{array}$
     & $\begin{array}{l}I_C=\langle y,z,u \rangle \\ {\bf w}=(0,2,1,1)\end{array}$  & $cA/2$ &{\bf IV}\\
     \hline
     
{\bf 6}&$\begin{array}{l}  x^{k }y +y^2+ h_+y + z^2 + \epsilon u^2 + g_{>}=0 \\
h_+ \in \langle y,z,u \rangle^2, g_+  \in \langle z,u \rangle^3,\\
\epsilon=0 \textup{ or }1, \textup{ and } k \ge 2.\end{array}$
     & $\begin{array}{l}I_C=\langle y,z,u \rangle \\ {\bf w}=(0,2,1,1)\end{array}$  & $cA/2$& {\bf V} \\    
     \hline 
     
{\bf 7}&$\begin{array}{l}  x^{k }y +h_+y + z^2 +u^2 +  g_{+}=0 \\
h_+ \in \langle y,z,u \rangle,\ g_+ \in \langle z,u \rangle^3, k \ge 2. \end{array}$
     & $\begin{array}{l}I_C=\langle y,z,u \rangle \\ {\bf w}=(0,2,1,1)\end{array}$  & $cA/2$&{\bf VIII} \\
    \hline
    \end{tabular}
    \caption{classifications in the case Sing$(X)=cA$.} 
    \label{Table cA}
\end{table}
}

{\Small

\begin{longtable}[h]{|l|l|l|l|l|} 
    \hline
     {\bf No.}&$X\hookrightarrow  W$ and description & $\begin{array}{l} I_{C/W} \textup{ and }\\\textup{weights }{\bf w}\end{array}$ & $\begin{array}{l}  \textup{Non-Gor. }\\\textup{sing. of }Y\end{array}$ &  {\bf Ref.}\\
    \hline
   {\bf 8}&$\begin{array}{l}  y^2+x^2y+ h_+y+ z^2u+g_+=0, \\
h_+ \in \langle y,z,u \rangle^2, 
\textup{ and }g_+ \in \langle z, u \rangle^3. \end{array}$
     & $\begin{array}{l}I_C=\langle y,z,u \rangle \\ {\bf w}=(0,3,1,1)\end{array}$  & $cD/3$& {\bf VI-1} \\
     \hline

{\bf 9}&$\begin{array}{l}  \left\{ \begin{array}{l}  y^2+x^2t +h_+y + z^2u +  g_{+}=0, \\
t- (y + g')=0, \end{array} \right. \\
h_+ \in \langle y,z,u \rangle^2, g_+ \in \langle z, u \rangle^3, \textup{ and } g_{v=2}=x^2 g'. \end{array}$
     & $\begin{array}{l}I_C=\langle y,z,u,t \rangle \\ {\bf w}=(0,2,1,1,3)\end{array}$   & $cD/3$ &{\bf VI-2} \\
     \hline
     
{\bf 10}&$\begin{array}{l}  \left\{ \begin{array}{l}  y^2+x^{2}t +h_+y + z^2u +  g_{+}=0, \\
t- (xy + g')=0, \end{array} \right. \\
h_+ \in \langle y,z,u \rangle^2, g_+ \in \langle z, u \rangle^3, \textup{ and } g_{v=2}=x^2 g'. \end{array}$
     & $\begin{array}{l}I_C=\langle y,z,u,t \rangle \\ {\bf w}=(0,2,1,1,3)\end{array}$   & $\begin{array}{l}cD/3,\\ \frac{1}{2}(1,1,1)\end{array}$ &{\bf VI-2} \\
     \hline

{\bf 11}&$\begin{array}{l}  \left\{ \begin{array}{l}  y^2+x^{2}t +h_+y + z^2u +  g_{+}=0, \\
t- (x^{k-2}y + g')=0. \end{array} \right. \\
h_+ \in \langle y,z,u \rangle^2, 
g_+ \in \langle z, u \rangle^3,\\
k \ge 4, \textup{ and } g_{v=2}=x^2g' \textup{ with rank}(g'_2)=2. \end{array}$
     & $\begin{array}{l}I_C=\langle y,z,u,t \rangle \\ {\bf w}=(0,2,1,1,3)\end{array}$   & $\begin{array}{l}cD/3,\\ cA/2 \end{array}$&{\bf VI-2} \\
     \hline
     
{\bf 12}&$\begin{array}{l}  \left\{ \begin{array}{l}  y^2+x^{2}t +h_+y + z^2u +  g_{+}=0, \\
t- (x^{2}y + g')=0, \end{array} \right. \\
h_+ \in \langle y,z,u \rangle^2, 
g_+ \in \langle z, u \rangle^3,\\ \textup{ and } g_{v=2}=x^2g' \textup{ with rank}(g'_2)=1. \\
\end{array}$
     & $\begin{array}{l}I_C=\langle y,z,u,t \rangle \\ {\bf w}=(0,2,1,1,3)\end{array}$   & $\begin{array}{l}cD/3,\\ cAx/2 \end{array}$ &{\bf VI-2} \\
     \hline
     
{\bf 13}&$\begin{array}{l}  \left\{ \begin{array}{l}  y^2+x^{2}t +h_+y + z^2u +  g_{+}=0, \\
t- (x^{k-2}y + g')=0. \end{array} \right. \\
h_+ \in \langle y,z,u \rangle^2, 
g_+ \in \langle z, u \rangle^3,  k \ge 4,\\ g_{v=2}=x^2g' \textup{ with either } z^2 + xu^2\in g' \textup{ or } u^2+xz^2 \in g'.\end{array}$
     & $\begin{array}{l}I_C=\langle y,z,u,t \rangle \\ {\bf w}=(0,2,1,1,3)\end{array}$   & $\begin{array}{l}cD/3,\\ cD/2 \end{array}$ &{\bf VI-2} \\
     \hline

{\bf 14}&$\begin{array}{l}  \left\{ \begin{array}{l}  y^2+x^{2}t +h_+y + z^2u +  g_{+}=0, \\
t- (x^{k-2}y + g')=0. \end{array} \right. \\
h_+ \in \langle y,z,u \rangle^2, 
g_+ \in \langle z, u \rangle^3,  k = 4,\\ g_{v=2}=x^2g' \textup{ with rank } (g'_2)=1, \textup{ and } g'_3=0.\end{array}$
     & $\begin{array}{l}I_C=\langle y,z,u,t \rangle \\ {\bf w}=(0,2,1,1,3)\end{array}$   & $\begin{array}{l}cD/3,\\ cE/2 \end{array}$ &{\bf VI-2} \\
     \hline

{\bf 15}&$\begin{array}{l}  \left\{ \begin{array}{l}  y^2+xt +h_+y  +  g_{+}=0, \\
t- (xy + g')=0, \end{array} \right. \\
h_+ \in \langle y,z,u \rangle^2, g_+ \in \langle z, u \rangle^3, \textup{ and } g_{v=2}=x g' \textup{ with } g'_2 \ne 0. \end{array}$
     & $\begin{array}{l}I_C=\langle y,z,u,t \rangle \\ {\bf w}=(0,2,1,1,3)\end{array}$   & $\begin{array}{l} \frac{1}{3}(2,1,1),\\ \frac{1}{2}(1,1,1)\end{array}$ &{\bf VI-3} \\
     \hline

{\bf 16}&$\begin{array}{l}  \left\{ \begin{array}{l}  y^2+xt +h_+y  +  g_{+}=0, \\
t- (x^{k-1}y + g')=0. \end{array} \right. \\
h_+ \in \langle y,z,u \rangle^2, 
g_+ \in \langle z, u \rangle^3,\\
k \ge 3, \textup{ and } g_{v=2}=xg' \textup{ with rank}(g'_2)=2. \end{array}$
     & $\begin{array}{l}I_C=\langle y,z,u,t \rangle \\ {\bf w}=(0,2,1,1,3)\end{array}$   & $\begin{array}{l}\frac{1}{3}(2,1,1),\\ cA/2 \end{array}$&{\bf VI-3} \\
     \hline
     
{\bf 17}&$\begin{array}{l}  \left\{ \begin{array}{l}  y^2+xt +h_+y  +  g_{+}=0, \\
t- (x^{2}y + g')=0, \end{array} \right. \\
h_+ \in \langle y,z,u \rangle^2, 
g_+ \in \langle z, u \rangle^3,\\ \textup{ and } g_{v=2}=xg' \textup{ with rank}(g'_2)=1. \\
\end{array}$
     & $\begin{array}{l}I_C=\langle y,z,u,t \rangle \\ {\bf w}=(0,2,1,1,3)\end{array}$   & $\begin{array}{l}\frac{1}{3}(2,1,1),\\ cAx/2 \end{array}$ &{\bf VI-3} \\
     \hline
     
{\bf 18}&$\begin{array}{l}  \left\{ \begin{array}{l}  y^2+xt +h_+y  +  g_{+}=0, \\
t- (x^{k-2}y + g')=0. \end{array} \right. \\
h_+ \in \langle y,z,u \rangle^2, 
g_+ \in \langle z, u \rangle^3,  k \ge 4,\\ g_{v=2}=xg' \textup{ with rank } (g'_2)=1.\end{array}$
     & $\begin{array}{l}I_C=\langle y,z,u,t \rangle \\ {\bf w}=(0,2,1,1,3)\end{array}$   & $\begin{array}{l}\frac{1}{3}(2,1,1),\\ cD/2 \end{array}$ &{\bf VI-3} \\
     \hline
 
    {\bf 19}&$\begin{array}{l} \left\{ \begin{array}{l}  y^2+xt +h_+y +g_{+}=0, \\
t- (x^{k-1}y + g')=0, \end{array} \right. \\
h_+ \in \langle y,z,u \rangle^2, g_+ \in \langle z,u \rangle^4,  g_{v=2}=xg', \textup{ and } k \ge 2. \end{array}$
     & $\begin{array}{l}I_C=\langle y,z,u,t \rangle \\ {\bf w}=(0,2,1,1,4)\end{array}$   & $cAx/4$ & {\bf VI-4}\\
     \hline





     
{\bf 20}&$\begin{array}{l} \left\{ \begin{array}{l}  y^2+x^{2}t +h_+y +  g_{+}=0, \\
t- (x^{k-2}y + g')=0, \end{array} \right. \\
h_+ \in \langle y,z,u \rangle^2, g_+ \in x\langle z,u \rangle^2 + \langle z,u \rangle^3, \\
 g_{v=1}=x^2g',\textup{ and } k \ge 2.\end{array}$
     & $\begin{array}{l}I_C=\langle y,z,u,t \rangle \\ {\bf w}=(0,1,1,1,2)\end{array}$   & $cAx/2$ &{\bf VI-5}\\
     \hline

     
{\bf 21}&$\begin{array}{l} \left\{ \begin{array}{l}  y^2+x^{k'}t +h_+y +  g_{+}=0, \\
t- (x^{k-k'}y + g')=0, \end{array} \right. \\
h_+ \in \langle y,z,u \rangle^2, g_+ \in x\langle z,u \rangle^2 + \langle z,u \rangle^3, g_{v=1}=x^{k'}g',  \\
\textup{ some of } \{xz^2, xzu, xu^2\} \in g_+, \textup{ and } k\geq k'\geq 3. \end{array}$
     & $\begin{array}{l}I_C=\langle y,z,u,t \rangle \\ {\bf w}=(0,1,1,1,2)\end{array}$   & $cD/2$ &{\bf VI-5}\\
     \hline

{\bf 22}&$\begin{array}{l} \left\{ \begin{array}{l}  y^2+x^{3}t +h_+y +  g_{+}=0, \\
t- (x^{k-3}y + g')=0, \end{array} \right. \\
h_+ \in \langle y,z,u \rangle^2, g_+ \in x^2\langle z,u \rangle^2 + \langle z,u \rangle^3, g_{v=1}=x^3g', \\
\textup{ some of } \{z^3, z^2u, zu^2, u^3\} \in g_+
\end{array}$
     & $\begin{array}{l}I_C=\langle y,z,u,t \rangle \\ {\bf w}=(0,1,1,1,2)\end{array}$   & $cE/2$ &{\bf VI-5}\\
     \hline

{\bf 23}&$\begin{array}{l}   x^{2 }y +h_+y + z^2  +  g_{+}=0, \\
h_+ \in x  \langle y,z,u \rangle+ \langle y,z,u \rangle^2,\ g_+ \in x\langle z,u \rangle^2+\langle z,u \rangle^3. \end{array}$
     & $\begin{array}{l}I_C=\langle y,z,u \rangle \\ {\bf w}=(0,2,1,1)\end{array}$  & $cAx/2$ & {\bf IX-1} \\   
      \hline
      
{\bf 24} &$\begin{array}{l}  x^{k }y +h_+y + z^2  + xu^2+\epsilon xuz+ g_{+}=0 \\
h_+ \in x  \langle y,z,u \rangle+ \langle y,z,u \rangle^2,\ g_+ \in x^2\langle z,u \rangle^2+\langle z,u \rangle^3, \\
k \ge 3, \epsilon=0 \textup { or }1. \end{array}$
     & $\begin{array}{l}I_C=\langle y,z,u \rangle \\ {\bf w}=(0,2,1,1)\end{array}$  & $cD/2$ & {\bf IX-2} \\  
     \hline
    
{\bf 25}&$\begin{array}{l} x^{k }y +\lambda xy + h_+y + z^2  + g_{+}=0 \\
h_+ \in x^2 \langle y,z,u \rangle+ \langle y,z,u \rangle^2,  g_+ \in x^2\langle z,u \rangle^2+\langle z,u \rangle^3\\ 
k \ge 3, \lambda \textup{ is linear in  } y,u. \end{array}$
     & $\begin{array}{l}I_C=\langle y,z,u \rangle \\ {\bf w}=(0,2,1,1)\end{array}$ & $cD/2$ & {\bf IX-3}\\ 
     \hline

{\bf 26}&$\begin{array}{l} x^{3 }y +h_+y + z^2  +u^3+zu^2+ g_{+}=0 \\
h_+ \in x^2\langle y,z,u \rangle+ \langle y,z,u \rangle^2, \\  g_+ \in x^2\langle z,u \rangle^2+ x\langle z,u \rangle^3 +\langle z,u \rangle^4,  
k \ge 3. \end{array}$
     & $\begin{array}{l}I_C=\langle y,z,u \rangle \\ {\bf w}=(0,2,1,1)\end{array}$ & $cE/2$ & {\bf IX-4}\\ 
     \hline
     
{\bf 27}&$\begin{array}{l}  \left\{ \begin{array}{l}  x^{2}t +h_+y + z^2 +  g_{+}=0, \\
t- (x^{k-2}y + g')=0, \end{array} \right. \\
h_+ \in x\langle y,z,u \rangle+\langle y,z,u \rangle^2, g_+ \in x\langle z,u \rangle^2+\langle z,u \rangle^3, \\
 k\geq 2, \textup{ with }g_{v=1}=x^2g', \textup{ and } f_3 \textup{ is not a cube}.  \end{array}$
     & $\begin{array}{l}I_C=\langle y,z,u,t \rangle \\ {\bf w}=(0,1,1,1,2)\end{array}$   & $cAx/2$ &{\bf IX-5} \\

    \hline
{\bf 28}&$\begin{array}{l}  \left\{ \begin{array}{l}  x^{k'}t +h_+y + z^2 +  g_{+}=0, \\
t- (x^{k-k'}y + g')=0, \end{array} \right. \\
h_+ \in x\langle y,z,u \rangle+\langle y,z,u \rangle^2, g_+ \in x\langle z,u \rangle^2+\langle z,u \rangle^3, \\
 k\geq k'  \ge 3, \textup{ with }g_{v=1}=x^2g', \textup{some of }\{xy^2,xyu,xu^2 \} \in f.  \end{array}$
     & $\begin{array}{l}I_C=\langle y,z,u,t \rangle \\ {\bf w}=(0,1,1,1,2)\end{array}$   & $cD/2$ &{\bf IX-5} \\

    \hline
{\bf 29}&$\begin{array}{l}  \left\{ \begin{array}{l}  x^{3}t +h_+y + z^2 +  g_{+}=0, \\
t- (x^{k-3}y + g')=0, \end{array} \right. \\
h_+ \in x\langle y,z,u \rangle+\langle y,z,u \rangle^2, g_+ \in x\langle z,u \rangle^2+\langle z,u \rangle^3, \\
 k \ge 3, \textup{ with }g_{v=1}=x^2g', \textup{none of }\{xy^2,xyu,xu^2 \} \in f, \\ \textup{some of }\{y^3,y^2u,yzu,yu^2, zu^2,u^3 \} \in f,  \textup{ and } f_3 \textup{ is not a cube}.  \end{array}$
     & $\begin{array}{l}I_C=\langle y,z,u,t \rangle \\ {\bf w}=(0,1,1,1,2)\end{array}$   & $cE/2$ &{\bf IX-5} \\
    
    \hline
    \caption{classifications in the case Sing$(X)=cD$}

    \label{Table cD'}
\end{longtable}
}

{\Small

\begin{longtable}[h]{|l|l|l|l|l|} 
    \hline
    {\bf No.}&$X\hookrightarrow  W$ and description & $\begin{array}{l} I_{C/W} \textup{ and }\\\textup{weights }{\bf w}\end{array}$ & $\begin{array}{l}  \textup{Non-Gor. }\\\textup{sing. of }Y\end{array}$ &  {\bf Ref.}\\
    \hline
   {\bf 30}&$\begin{array}{l}  y^2+x^2y+ h_+y+ z^3+g_{+}=0\\
h_+ \in \langle y,z,u \rangle^2, g_+ \in x\langle z,u \rangle^3+\langle z,u \rangle^4.  \end{array}$
     & $\begin{array}{l}I_C=\langle y,z,u \rangle \\ {\bf w}=(0,3,1,1)\end{array}$  & $cD/3$ & {\bf VII-1} \\
     \hline
     
    {\bf 31}&$\begin{array}{l} \left\{ \begin{array}{l}  y^2+x^{2}t +h_+y + z^3 +  g_{+}=0, \\
t- (y + g')=0, \end{array} \right. \\
h_+ \in \langle y,z,u \rangle^2, g_+ \in x\langle z,u \rangle^3+\langle z,u \rangle^4, 
 g_{v=2}=x^{2} g'. \end{array}$
     & $\begin{array}{l}I_C=\langle y,z,u,t \rangle \\ {\bf w}=(0,2,1,1,3)\end{array}$  & $cD/3$ &{\bf VII-2}\\
      \hline
      
{\bf 32}&$\begin{array}{l} \left\{ \begin{array}{l}  y^2+x^{2}t +h_+y + z^3 +  g_{+}=0, \\
t- (xy + g')=0, \end{array} \right. \\
h_+ \in \langle y,z,u \rangle^2, g_+ \in x\langle z,u \rangle^3+\langle z,u \rangle^4, 
 g_{v=2}=x^{2} g'. \end{array}$
     & $\begin{array}{l}I_C=\langle y,z,u,t \rangle \\ {\bf w}=(0,2,1,1,3)\end{array}$  & $\begin{array}{l} cD/3,\\ \frac{1}{2}(1,1,1)\end{array}$ &{\bf VII-2}\\
      \hline
      
{\bf 33}&$\begin{array}{l} \left\{ \begin{array}{l}  y^2+x^{2}t +h_+y + z^3 +  g_{+}=0, \\
t- (x^{k-2}y + g')=0, \end{array} \right. \\
h_+ \in \langle y,z,u \rangle^2, g_+ \in x\langle z,u \rangle^3+\langle z,u \rangle^4,\\
g_{v=2}=x^{2} g',  \textup{ with  rank}(g_2')=2, \textup{ and } k \ge 4. \end{array}$
     & $\begin{array}{l}I_C=\langle y,z,u,t \rangle \\ {\bf w}=(0,2,1,1,3)\end{array}$  & $\begin{array}{l} cD/3,\\ cA/2\end{array}$  &{\bf VII-2}\\
     \hline
     
{\bf 34}&$\begin{array}{l} \left\{ \begin{array}{l}  y^2+x^{2}t +h_+y + z^3 +  g_{+}=0, \\
t- (x^{2}y + g')=0, \end{array} \right. \\
h_+ \in \langle y,z,u \rangle^2, g_+ \in x\langle z,u \rangle^3+\langle z,u \rangle^4, \\
g_{v=2}=x^{2} g' \textup{ with rank}(g_2')=1. \end{array}$
     & $\begin{array}{l}I_C=\langle y,z,u,t \rangle \\ {\bf w}=(0,2,1,1,3)\end{array}$  & $\begin{array}{l} cD/3,\\ cAx/2\end{array}$ &{\bf VII-2}\\
     \hline

{\bf 35}&$\begin{array}{l} \left\{ \begin{array}{l}  y^2+x^{2}t +h_+y + z^3 +  g_{+}=0, \\
t- (x^{k-2}y + g')=0, \end{array} \right. \\
h_+ \in \langle y,z,u \rangle^2, g_+ \in x\langle z,u \rangle^3+\langle z,u \rangle^4, \\
 g_{v=2}=x^{2} g', \textup{with }g' \ni z^2+xu^2, \textup{ or } u^2+xz^2.  \end{array}$
     & $\begin{array}{l}I_C=\langle y,z,u,t \rangle \\ {\bf w}=(0,2,1,1,3)\end{array}$  & $\begin{array}{l} cD/3,\\ cD/2\end{array}$ &{\bf VII-2}\\
     \hline

{\bf 36}&$\begin{array}{l} \left\{ \begin{array}{l}  y^2+x^{2}t +h_+y + z^3 +  g_{+}=0, \\
t- (x^{3}y + g')=0, \end{array} \right. \\
h_+ \in \langle y,z,u \rangle^2, g_+ \in x\langle z,u \rangle^3+\langle z,u \rangle^4, \\
 g_{v=2}=x^{2} g', \textup{with rank} (g_2') =1 \textup{ and } g'_3=0.  \end{array}$
     & $\begin{array}{l}I_C=\langle y,z,u,t \rangle \\ {\bf w}=(0,2,1,1,3)\end{array}$  & $\begin{array}{l} cD/3,\\ cE/2\end{array}$ &{\bf VII-2}\\
     \hline
     
{\bf 37}&$\begin{array}{l} \left\{ \begin{array}{l}  y^2+x^{2}t +h_+y + z^3 +  g_{+}=0, \\
t- (y + g')=0, \end{array} \right. \\
h_+ \in \langle y,z,u \rangle^2, \ g_+ \in x^2\langle z, u \rangle^2 + x\langle z, u \rangle^3 +\langle z, u \rangle^4 \\
g_{v=1}=x^2g'. \end{array}$
     & $\begin{array}{l}I_C=\langle y,z,u,t \rangle \\ {\bf w}=(0,1,1,1,2)\end{array}$  & $cAx/2$ &{\bf VII-3}\\
     \hline

     {\bf 38}&$\begin{array}{l}   x^{3}y +h_+y + z^2  + q+  g_{+}=0, \\
 h_+ \in x^2 \langle y,z,u \rangle +x \langle y,z,u \rangle^2 + \langle y,z,u \rangle^3,  \\ g_+ \in x^2\langle z, u \rangle^2 + x\langle z, u \rangle^3 +\langle z, u \rangle^4,  q=y^3 \textup{ or } u^3. \end{array}$
     & $\begin{array}{l}I_C=\langle y,z,u \rangle \\ {\bf w}=(0,2,1,1)\end{array}$ & $cE/2$ &{\bf X-1, X-2 } \\  
     \hline

    {\bf 39}&$\begin{array}{l}  \left\{ \begin{array}{l}  x^{3}t +h_+y + z^2 + q + g_{+}=0, \\
t- (x^{k-3}y + g')=0. \end{array} \right. \\
   h_+ \in  x^2\langle y,z,u \rangle+x\langle y,z,u \rangle^2+\langle y,z,u \rangle^3, \\
   \ g_+ \in x^2\langle z, u \rangle^2 + x\langle z, u \rangle^3 +\langle z, u \rangle^4, 
q=y^3 \textup{ or } u^3,  k\geq 3.
 \end{array}$
     & $\begin{array}{l}I_C=\langle y,z,u,t \rangle \\ {\bf w}=(0,1,1,1,2)\end{array}$   & $cE/2$ &{\bf X-3, X-4} \\ 
    \hline

    \caption{classifications when Sing$(X)=cE$}
    \label{Table cE}

\end{longtable}
}

{\Small

\begin{longtable}[h]{|l|l|l|l|l|} 
    \hline
    {\bf No.}&$X\hookrightarrow  W$ and description & $\begin{array}{l} I_{C/W} \textup{ and }\\\textup{weights }{\bf w}\end{array}$ & $\begin{array}{l}  \textup{Non-Gor. }\\\textup{sing. of }Y\end{array}$ &  {\bf Ref.}\\
    \hline
    {\bf Q1}&$\begin{array}{l}  xy + h_+y + g_{m}(x,z,u) + g_{>}(x,z,u)=0 \\
\subset \bC^4/\frac{1}{r}(r-1, 1, \alpha, r),\\
h_+ \in \langle y,z,u\rangle,\ g_m \in \langle z,u\rangle^m, \\
\gcd(\alpha,r)=1 \textup{ and } \gcd(m\alpha-1,mr)=1 \end{array}$
     & $\begin{array}{l}I_C=\langle y,z,u \rangle \\  {\bf w}=(0,m,1,1)\end{array}$  & $\frac{1}{mr}(1,m\alpha-1,mr-1)$ &{\bf I} \\
     \hline
    {\bf Q2}&$\begin{array}{l}  \left\{ \begin{array}{l}  xt +h_+y +  g_{v>1}=0, \\
t- (x^{k-1}y + z)=0, \end{array} \right. \\
\subset \bC^5/\frac{1}{r}(r-1, \alpha, 1, r,1)  \textup{ or }\bC^5/\frac{1}{r}(r-1, r, 1, \alpha,1),  
\\
h_+\in \langle y, z,u\rangle, k \ge 2,\\
m':=wt(h_+y+g_{v>1}), \gcd(m'\alpha-1,m'r)=1.
 \end{array}$
     & $\begin{array}{l}I_C=\langle y,z,u,t \rangle \\ {\bf w}=(0,1,1,1,m')\end{array}$   & $\begin{array}{l} \frac{1}{m'r}(m'\alpha-1,1,-1)\\ \textup{or }\frac{1}{m'r}(-1,1,m'\alpha-1)\end{array}$ &{\bf III}\\
     \hline

    {\bf Q3}&$\begin{array}{l}   x^ky +y^2+ h_+y +z^2+\epsilon u^2 + g_{>2}=0   \\
\subset \bC^4/\frac{1}{2}(1, 1, 0, 1) \\  
h_+ \in \langle y,z,u \rangle^2, \epsilon =0 \textup{ or } 1, \textup{ and odd } k \ge 2.
 \end{array}$
     & $\begin{array}{l}I_C=\langle y,z,u \rangle \\ \bw=(0,2,1,1)\end{array}$   & $ cAx/4 \textup{ in } \frac{1}{4}(2,1,3,1)$ &{\bf V} \\ 
     \hline

{\bf Q4} & $\begin{array}{l}   x^{k }y +h_+y + z^2 +u^2 +  g_{+}=0   \\
\subset \bC^4/\frac{1}{2}(1, 1, 1, 0) \\  
h_+ \in x\langle y,z,u \rangle+\langle y,z,u \rangle^2, g_+ \in \langle z,u \rangle^3, \\
\textup{and odd } k \ge 2.
 \end{array}$
     & $\begin{array}{l}I_C=\langle y,z,u \rangle \\ \bw=(0,2,1,1)\end{array}$   & $ cAx/4 \textup{ in } \frac{1}{4}(2,1,1,3)$ &{\bf VIII} \\ 
     \hline

    \caption{classifications in the case Sing$(X)=cDV/r$.} 
    \label{Table cA/r}
\end{longtable}
}


\bibliographystyle{amsalpha}
\bibliography{JKref}

\end{document}